\newenvironment{nouppercase}{%
  \renewcommand{\uppercasenonmath}[1]{}}{}
\theoremstyle{definition}
\newtheorem{theorem}[equation]{Theorem}
\newtheorem{defi}[equation]{Definition} 
\newtheorem*{conj*}{Conjecture}
\newtheorem*{theorem*}{Theorem}
\newtheorem{remark}[equation]{Remark} 
\newtheorem{cor}[equation]{Corollary}
\newtheorem{lemma}[equation]{Lemma}
\newtheorem{exam}[equation]{Example}
\def\zetas{\zeta^{\star}}
\begin{document}

\title{Symmetric Schur multiple zeta functions}
\author{Maki Nakasuji}
\address{Department of Information and Communication Science, Faculty of Science, Sophia
University, 7-1 Kioi-cho, Chiyoda-ku, Tokyo, 102-8554, Japan}
\email{nakasuji@sophia.ac.jp}
\author[W. Takeda]{Wataru Takeda}
\address{Department of Applied Mathematics, Tokyo University of Science,
1-3 Kagurazaka, Shinjuku-ku, Tokyo 162-8601, Japan.}
\email{w.takeda@rs.tus.ac.jp}
\subjclass[2020]{11M32,05E05}
\keywords{Schur $P$-multiple zeta function, Schur $Q$-multiple zeta function, symplectic Schur multiple zeta function, orthogonal Schur multiple zeta function}

\begin{nouppercase}
\maketitle
\end{nouppercase}
\begin{abstract}
We introduce the multiple zeta functions with structures similar to those of symmetric functions such as Schur $P$-, Schur $Q$-, symplectic and orthogonal functions in the representation theory.
We first consider their basic properties such as a domain of absolute convergence.
And then by restricting to the truncated multiple zeta functions, we obtain the pfaffian expression of the Schur $Q$-multiple zeta functions, the sum formula for Schur $P$- and Schur $Q$-multiple zeta functions, the determinant expressions of symplectic and orthogonal Schur multiple zeta functions under an assumption on variables.
Finally, we generalize those to the quasi-symmetric functions.
\end{abstract}

\section{Introduction}
The well-known Hall-Littlewood symmetric functions are a family of symmetric functions that depend on a parameter $t$:\\
For $\lambda=(\lambda_1, \lambda_2, \cdots, \lambda_r)$ being a partition such that
$\lambda_i\in {\mathbb Z}$, $\lambda_1\geq \lambda_2\geq \cdots \geq \lambda_{r}\geq 0$ and
${\pmb x}=(x_1, x_2, \cdots, x_r)$ being variables, 
\begin{equation}\label{HallLittlewood}
P_{\lambda}({\pmb x}; t)=\frac{1}{v_{\lambda}(t)}\sum_{\sigma\in {\mathfrak S}_r} \sigma 
\left({\pmb x}^{\lambda} 
\prod_{\substack{1\leq i<j\leq r}}\frac{x_i-tx_j}{x_i-x_j}\right),
\end{equation}
where $\displaystyle{v_{\lambda}(t)=\prod_{j\geq 0} \prod_{k=1}^{m_j}\frac{1-t^k}{1-t}}$ with $m_j=\#\{i ~|~ 1\leq i\leq r, \lambda_i=j\}$, 
${\mathfrak S}_r$ is a symmetric group of degree $r$, 
and ${\pmb x}^{\lambda}=x_1^{\lambda_1}\ldots x_r^{\lambda_r}$.
When $t=0$, it is the Schur polynomial which we write it by $s_{\lambda}({\pmb x})=P_{\lambda}({\pmb x}; 0)$. Schur polynomials are irreducible general linear characters and can be written by means of semi-standard Young tableau combinatorially. Mainly in representation theory, much research has been done on this function since its introduction by Schur (\cite{sc01}).
Gessel and Viennot (\cite{gv}), after Lindstr\"{o}m introduced the idea in the work of \cite{li} for purposes in different contexts, gave a general methodology interpretating determinants as certain configurations of non-crossing paths in an acyclic digraph related with the theory of Young tableaux and symmetric functions, in what follows we call this technique as the Gessel-Viennot lattice path procedure, and proved the determinant formula called Jacobi-Trudi identity for Schur functions by using it.  
When $t=-1$ in \eqref{HallLittlewood}, the function is called Schur $P$-function or $Q$-function written by
$P_{\lambda}({\pmb x})=P_{\lambda}({\pmb x}; -1)$
or $Q_{\lambda}({\pmb x})=2^{r}P_{\lambda}({\pmb x}; -1)$, respectively,
which was also introduced by Schur (\cite{sc11}). We note that the Schur $Q$-function was originally defined via certain pfaffians in his analysis of projective representations of symmetric groups. The tableau description of Schur $Q$-functions was introduced by Stembridge (\cite{st89}) in using the theory of shifted tableaux developed by Worley (\cite{wo}) and Sagan (\cite{sa}), and the combinatorial structure of this function was revealed. In his paper \cite{st}, Stembridge obtained that the tableau definition agrees with Schur's pfaffian expressions by a generalization of the Gessel-Viennot lattice path procedure.
In parallel with the above theory, 
the symplectic and orthogonal Schur functions which are irreducible symplectic and orthogonal characters and can be also defined combinatorially, have been developed. It is well-known that
a similar discussion such as a determinant formula holds by using an analogue of the Gessel-Viennot lattice path procedure. (see Hamel-Goulden \cite{hg}, Hamel \cite{ha} and Foley-King \cite{fk}.)

The Schur multiple zeta function introduced by Nakasuji, Phuksuwan and Yamasaki (\cite{npy}),  is a generalization of both multiple zeta and zeta-star function of Euler-Zagier type
with a combinatorial structure similar to a Schur polynomial.
Since this function has a combinatorial and an analytic features,
the characteristics of both sides have been investigated in recent years. 
In \cite{npy}, they obtained some determinant formulas such as the Jacobi-Trudi formula, Giambelli formula and dual Cauchy formula for Schur multiple zeta functions by using the Gessel-Viennot lattice path procedure and properties of Young tableaux.
Nakasuji and Ohno (\cite{no}) obtained the duality formula and its generalization called Ohno relation for Schur multiple zeta functions which are the extension of those for multiple zeta functions of Euler-Zagier type. In the theory of original multiple zeta value, the Ohno relation includes some relations such as duality formula, sum formula (Theorem \ref{originalsum}) and Hoffman relation. However, for Schur multiple zeta functions, the sum formula has not been obtained from the Ohno relation.
Now it is natural to ask whether we can define multiple zeta functions with structures similar to those of symmetric functions such as Schur $P$- or $Q$-functions, symplectic or orthogonal functions.
In this paper, we will focus on this point.

In Section \ref{basicprop}, for ${ \pmb s}=(s_{ij})\in ST(\lambda, {\mathbb C})$ being the set of all shifted tableaux of shape $\lambda$ over ${\mathbb C}$,
we introduce the 
{\it Schur $P$-multiple zeta functions} and the {\it Schur $Q$-multiple zeta functions} 
of {shape} $\lambda$  as the following series
\[
\zeta_\lambda^P({ \pmb s})=\sum_{M\in PSST(\lambda)}\frac{1}{M^{ \pmb s}},
\text{ and }
    \zeta_\lambda^Q({ \pmb s})=\sum_{M\in QSST(\lambda)}\frac{1}{M^{ \pmb s}},
\]
respectively, 
where $PSST(\lambda)$ and $QSST(\lambda)$ are the sets of semi-standard marked shifted tableaux of shape $\lambda$ under certain conditions (see detail in Section \ref{basicprop}), and discuss their basic properties such as the domain of convergence.
In Section \ref{pfaffiansection}, we consider the pfaffian expression of the (truncated) Schur $Q$-multiple zeta functions by following Stembridge (\cite{st89}).
Here, the truncated Schur $Q$-multiple zeta function are
 \[
    \zeta_\lambda^{Q,N}({ \pmb s})=\sum_{M\in QSST_N(\lambda)}\frac{1}{M^{ \pmb s}}
\]
for a fixed positive integer $N\in \mathbb{N}$, where $QSST_N(\lambda)$ are the sets of all $(m_{ij})\in QSST(\lambda)$ such that $m_{ij}\le N$ for all $i,j$.
In Section \ref{skewsection}, we find out that the above discussion can be easily generalized to the skew type.
In Section \ref{outsidesec}, after reviewing the outside decomposition of shifted Young diagram according to Hamel-Goulden \cite{hg},
we apply it to our skew type Schur $Q$-multiple zeta functions and obtain the pfaffian expressions associated with that decomposition.
In Section \ref{sumformsec}, we will discuss the sum formula for our Schur $P$- and $Q$-multiple zeta functions.
Sections \ref{symplecticsection}, \ref{orthogonalsec}, and \ref{decompsec} are devoted to discussions of symplectic and orthogonal Schur multiple zeta functions.
Roughly speaking, for positive integer $N$ and ${ \pmb s}=(s_{ij})\in T(\lambda, {\mathbb C})$ being the set of all Young tableaux of shape $\lambda$ over ${\mathbb C}$, we define the 
{\it symplectic Schur multiple zeta functions} and the {\it orthogonal Schur multiple zeta functions} 
of {shape} $\lambda$ as the following series
\[
\zeta_\lambda^{{\rm sp},N}({ \pmb s})=\sum_{M\in SP_N(\lambda)}\frac{1}{M^{ \pmb s}},
\text{ and }
    \zeta_\lambda^{{\rm so},N}({ \pmb s})=\sum_{M\in SO_N(\lambda)}\frac{1}{M^{ \pmb s}},
\]
respectively, 
where $SP_N(\lambda)$ and $SO_N(\lambda)$ are the sets of all symplectic tableaux and so-tableaux of shape $\lambda$ (see detail in Section \ref{symplecticsection} and \ref{orthogonalsec}).
We compose directed graphs corresponding to those functions as analogous to the original symplectic and orthogonal Schur functions due to Hamel (\cite{ha}) and give the determinant expressions in a manner similar to Hamel where we apply the Stembridge Theorem \cite{st}.
Further, we will give their decomposition into a sum of truncated multiple zeta or zeta-star functions.
Lastly, in Section \ref{sec:Sqsf}, we study the extension of all of those functions to the quasi-symmetric functions. We obtain the pfaffian expressions for Schur $Q$-type quasi-symmetric functions and determinant expressions for symplectic type and orthogonal type quasi-symmetric functions.

\section{Basic properties of Schur $P$- and $Q$-multiple zeta functions}
\label{basicprop}
We first review the basic terminology to define Schur $P$- and $Q$-multiple zeta functions.
A partition $\lambda=(\lambda_1,\ldots,\lambda_r)$ is called strict, if $\lambda_1 > \lambda_2 > \cdots > \lambda_r \ge 0$. 
Then, we associate strict partition $\lambda$ with {\it the shifted diagram}
\[SD(\lambda)=\{(i, j)\in {\mathbb Z}^2 ~|~ 1\leq i\leq r, i\leq j\leq \lambda_i+i-1\}\] depicted as a collection of square boxes with the $i$-th row has $\lambda_i$ boxes.
{ We say that $(i,j)\in SD(\lambda)$ is a corner of $\lambda$ if $(i+1, j)\notin SD(\lambda)$ and $(i, j+1)\notin SD(\lambda)$
 and denote by $SC(\lambda) \subset SD(\lambda)$ the set of all corners of $\lambda$;
 for example, $SC((4,2,1))=\{(1,4),(3,3)\}$.}
For a strict partition $\lambda$, a shifted tableau $(t_{ij})$ of shape $\lambda$ over a set $X$ is a filling of $SD(\lambda)$ with $t_{ij}\in X$ into $(i,j)$ box of $SD(\lambda)$.
We denote by $ST(\lambda,X)$ the set of all shifted tableaux of shape $\lambda$ over $X$.

Let $\mathbb N'$ be the set $\{1',1,2',2,\ldots\}$ with the total ordering $1' < 1 < 2' < 2 <\cdots $. Then, a {\it semi-standard marked shifted tableau} $\pmb t=(t_{ij})\in ST(\lambda,\mathbb N')$ is obtained by numbering all the boxes of $SD(\lambda)$ with letters from $\mathbb N'$ such that
\begin{description}
    \item[PST1] the entries of $\pmb t$ are weakly increasing along each column and row of $\pmb t$,
    \item[PST2] for each $i=1,2,\ldots$, there is at most one $i'$ per row,
    \item[PST3] for each $i=1,2,\ldots$, there is at most one $i$ per column,
     \item[PST4] there is no $i'$ on the main diagonal.
\end{description}
We denote by $PSST(\lambda)$ the set of semi-standard marked shifted tableaux of shape $\lambda$.
Then for a given set ${ \pmb s}=(s_{ij})\in ST(\lambda,\mathbb{C})$ of variables, 
{\it Schur $P$-multiple zeta functions} of {shape} $\lambda$ are defined as
\begin{equation}
\label{def:P}
\zeta_\lambda^P({ \pmb s})=\sum_{M\in PSST(\lambda)}\frac{1}{M^{ \pmb s}},
\end{equation}
where $M^{ \pmb s}=\displaystyle{\prod_{(i, j)\in SD(\lambda)}|m_{ij}|^{s_{ij}}}$ for $M=(m_{ij})\in PSST(\lambda)$ and $|i|=|i'|=i$.
Similarly, we denote by $QSST(\lambda)$ the set of semi-standard marked shifted tableaux of shape $\lambda$ without the diagonal condition PST4.
Then for a given set ${ \pmb s}=(s_{ij})\in ST(\lambda,\mathbb{C})$ of variables, 
{\it Schur $Q$-multiple zeta functions} of {shape} $\lambda$ are defined to be
\begin{equation}
\label{def:Q}
    \zeta_\lambda^Q({ \pmb s})=\sum_{M\in QSST(\lambda)}\frac{1}{M^{ \pmb s}},
\end{equation}
where $M^{ \pmb s}=\displaystyle{\prod_{(i, j)\in SD(\lambda)}|m_{ij}|^{s_{ij}}}$ for $M=(m_{ij})\in QSST(\lambda)$.
For a strict partition $\lambda=(\lambda_1,\ldots,\lambda_r)$, by the definitions of $\zeta_\lambda^P$ and $\zeta_\lambda^Q$, it holds that 
\begin{equation}
\label{pq}
    \zeta_\lambda^Q({ \pmb s})=2^r\zeta_\lambda^P({ \pmb s}).
\end{equation}
As in Introduction, we define the truncated $P$- and $Q$-multiple zeta functions:

 For a fixed positive integer $N\in \mathbb{N}$, let $PSST_N(\lambda)$ and $QSST_N(\lambda)$ be the sets of all $(m_{ij})\in PSST(\lambda)$ and $QSST(\lambda)$ such that $m_{ij}\le N$ for all $i,j$.
 Then, we define
\[
    \zeta_\lambda^{P,N}({ \pmb s})=\sum_{M\in PSST_N(\lambda)}\frac{1}{M^{ \pmb s}},\text{ and }
    \zeta_\lambda^{Q,N}({ \pmb s})=\sum_{M\in QSST_N(\lambda)}\frac{1}{M^{ \pmb s}}.
\]
In this section, we prove some basic properties of the Schur $P$- and $Q$-multiple zeta functions.
{We first consider the domain of absolute convergence of the series \eqref{def:P} and \eqref{def:Q}.}
\begin{lemma}
\label{lem:convergence}
 Let
\[
 W_{\lambda}^Q
=
\left\{{\pmb s}=(s_{ij})\in ST(\lambda,\mathbb{C})\,\left|\,
\begin{array}{l}
 \text{$\Re(s_{ij})\ge 1$ for all $(i,j)\in SD(\lambda) \setminus SC(\lambda)$} \\[3pt]
 \text{$\Re(s_{ij})>1$ for all $(i,j)\in SC(\lambda)$}
\end{array}
\right.
\right\}.
\]
 Then, the series \eqref{def:P} and \eqref{def:Q} converge absolutely if ${\pmb s}\in W_\lambda^Q$.
\end{lemma}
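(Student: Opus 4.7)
The plan is, via the identity $\zeta_\lambda^Q({\pmb s})=2^r\zeta_\lambda^P({\pmb s})$ in \eqref{pq}, to reduce to showing absolute convergence of $\zeta_\lambda^Q({\pmb s})$, and then to forget the prime markings and estimate a sum over weakly increasing integer fillings of $SD(\lambda)$. Writing $\sigma_{ij}=\Re(s_{ij})$, the map $M=(m_{ij})\mapsto N=(|m_{ij}|)$ from $QSST(\lambda)$ to integer fillings of $SD(\lambda)$ is at most $2^{|\lambda|}$-to-one (each entry is primed or unprimed). Moreover, if $m_{ij}\le m_{i',j'}$ in the order $1'<1<2'<2<\cdots$, then $|m_{ij}|\le |m_{i',j'}|$ in $\mathbb{Z}_{>0}$, so every such $N$ is weakly increasing along rows and columns. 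Hence
\[
\sum_{M\in QSST(\lambda)}|M^{\pmb s}|^{-1}
\;\le\; 2^{|\lambda|}\sum_{N}\prod_{(i,j)\in SD(\lambda)} n_{ij}^{-\sigma_{ij}},
\]
the sum on the right being over all weakly increasing positive-integer fillings of $SD(\lambda)$.

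Next, for each non-corner box $(i,j)\in SD(\lambda)\setminus SC(\lambda)$, either $(i+1,j)$ or $(i,j+1)$ lies in $SD(\lambda)$, so by iterating we may fix a corner $c(i,j)\in SC(\lambda)$ with $(i,j)\preceq c(i,j)$ componentwise; then $n_{ij}\le n_{c(i,j)}$ for every weakly increasing filling. Conditioning first on the corner values $(n_c)_{c\in SC(\lambda)}$ and dropping all remaining inter-non-corner monotonicity constraints (a valid upper bound), we obtain
\[
\sum_N \prod_{(i,j)} n_{ij}^{-\sigma_{ij}}
\;\le\; \sum_{(n_c)_{c\in SC(\lambda)}}\prod_{c\in SC(\lambda)} n_c^{-\sigma_c}
\prod_{(i,j)\notin SC(\lambda)}\Bigl(\sum_{n=1}^{n_{c(i,j)}} n^{-\sigma_{ij}}\Bigr).
\]
Since $\sigma_{ij}\ge 1$, the standard estimate $\sum_{n=1}^{N}n^{-\sigma}\le \log N+1$ gives $\sum_{n=1}^{n_{c(i,j)}}n^{-\sigma_{ij}}\le C\log(n_{c(i,j)}+2)$.

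Expanding the product of logarithmic factors as a sum of monomials in the variables $\{\log(n_c+2)\}_{c\in SC(\lambda)}$, the right-hand side becomes a finite $\mathbb{R}_{\ge 0}$-combination of products
\[
\prod_{c\in SC(\lambda)}\Bigl(\sum_{n=1}^{\infty} n^{-\sigma_c}\bigl(\log(n+2)\bigr)^{k_c}\Bigr),
\]
for various nonnegative integers $k_c$. Because $\sigma_c>1$ for every $c\in SC(\lambda)$, each one-dimensional sum converges, so the whole expression is finite; this finishes the argument for $\zeta_\lambda^Q$, hence also for $\zeta_\lambda^P$.

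The main obstacle is precisely the fact that at non-corner boxes $\sigma_{ij}$ is only required to be $\ge 1$ rather than $>1$, so the naive product bound $\prod_{(i,j)}\zeta(\sigma_{ij})$ is not available. The key mechanism is to use the row/column monotonicity coming from the $\mathbb{N}'$-ordering to upper bound each non-corner entry by a corner entry, turning each non-corner summation into a benign logarithmic factor in a corner value, which is then absorbed by the strict inequality $\sigma_c>1$ at the corners.
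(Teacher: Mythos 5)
Your proof is correct and rests on the same mechanism as the paper's: every non-corner entry is dominated (via row/column monotonicity) by a corner entry, each non-corner summation is thereby reduced to a partial harmonic sum up to a corner value, and the resulting slowly growing factor is absorbed by the strict inequality $\Re(s_c)>1$ at the corners. The only differences are cosmetic — you assign each box directly to a dominating corner and use the bound $\sum_{n\le N}n^{-1}=O(\log N)$, whereas the paper first splits the shifted diagram into staircase pieces each containing one corner and uses $\sum_{n\le N}n^{-1}=O_\varepsilon(N^\varepsilon)$.
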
 
\begin{proof}
By \eqref{pq}, it suffices to deal with $\zeta_\lambda^Q$. Let $\lambda$ be a strict partition and $SC(\lambda)=\{(i_1,j_1),\ldots,(i_k,j_k)\}$ where $i_1<\cdots<i_k$ and $j_1>\cdots>j_k$.
Since \[\left|\sum_{\substack{M\in QSST(\lambda)\\m_{ij}\le N}}\frac{1}{M^{ \pmb s}}\right|\le \prod_{\ell=1}^k\sum_{\substack{M\in QSST(\lambda_\ell)\\m_{ij}\le N}}\prod_{(i,j)\in SD(\lambda_\ell)}\frac{1}{|m_{ij}|^{\Re(t_{ij,\ell})}},
\]
where $\lambda_\ell=(j_\ell-i_{\ell-1},j_\ell-i_{\ell-1}-1,\ldots,j_\ell-i_\ell+1)$ and $t_{ij,\ell}=s_{i+i_{\ell-1},j+i_{\ell-1}}$,
we prove that for $\lambda=(\lambda_1,\ldots,\lambda_r):=(\lambda_1,\lambda_1-1,\ldots,\lambda_1-r+1)$,
\begin{equation}
\label{partss}
    \sum_{\substack{M\in QSST(\lambda)\\ m_{ij}\le N}}\prod_{(i,j)\in SD(\lambda)}\frac{1}{|m_{ij}|^{\Re(s_{ij})}}
\end{equation}
converges absolutely in $\pmb s\in W_\lambda^Q$ as $N\rightarrow\infty$. Rearranging the order of summation, we have
\begin{align*}
    \sum_{\substack{M\in QSST(\lambda)\\ m_{ij}\le N}}\prod_{(i,j)\in SD(\lambda)}\frac{1}{|m_{ij}|^{\Re(s_{ij})}}&=\sum_{N_1=1}^N\sum_{\substack{(m_{ij})\in QSST(\lambda)\\ m_{r\lambda_r}= N_1}}\prod_{\substack{(i,j)\in SD(\lambda)\\(i,j)\neq(r,\lambda_r)}}\frac{1}{|m_{ij}|^{\Re(s_{ij})}}\frac{1}{N_1^{\Re(s_{r\lambda_r})}},
    \intertext{where $\lambda_r=\lambda_1-r+1$. By extending the region of summation and product, it holds that }
    \sum_{\substack{M\in QSST(\lambda)\\ m_{ij}\le N}}\prod_{(i,j)\in SD(\lambda)}\frac{1}{|m_{ij}|^{\Re(s_{ij})}}&\le2^{r\lambda_r}\sum_{N_1=1}^N\underset{(i,j)\ne (r,\lambda_r)}{\prod^{r}_{i=1}\prod^{\lambda_r}_{j=1}}\sum^{N_1}_{m_{ij}=1}\frac{1}{m_{ij}}\frac{1}
    {N_1^{\Re(s_{r\lambda_r})}}.
    \intertext{Since for any $\varepsilon>0$, there exists a constant $C_{\varepsilon}>1$ such that \[\sum^{N}_{m_{ij}=1}\frac{1}{m_{ij}}<\frac{C_\varepsilon}{2} N^\varepsilon,\]
    we can estimate that }
    \sum_{\substack{M\in QSST(\lambda)\\ m_{ij}\le N}}\prod_{(i,j)\in SD(\lambda)}\frac{1}{|m_{ij}|^{\Re(s_{ij})}}&\le C_\varepsilon^{r\lambda_r}\sum_{N_1=1}^N\frac{N_1^{\varepsilon r\lambda_r}}
    {N_1^{\Re(s_{r\lambda_r})}}.
\end{align*}
We can choose a sufficiently small $\varepsilon$ such that $s_{r\lambda_r}-\varepsilon r\lambda_r>1$. Thus, \eqref{partss} converges absolutely and we obtain the lemma.
\end{proof}

\begin{remark}
The condition variables ${\pmb s}$ are in $W_\lambda^Q$ is a sufficient for that the series \eqref{def:P} and \eqref{def:Q} converge absolutely.
\end{remark}
Next, we show that a Schur $Q$-multiple zeta function can be written as a linear combination of the multiple zeta (star) functions as well as the Schur multiple zeta functions.  
 Indeed, for strict partition $\lambda$ of $n$,
let $\mathcal{SF}(\lambda)$ be the set of all bijections $f:SD(\lambda)\to\{1,2,\ldots,n\}$
 satisfying the following two conditions:
\begin{itemize}
\item[(i)]
 for all $i$, $f((i,j_1))<f((i,j_2))$ if and only if $j_1<j_2$, 
\item[(ii)]
 for all $j$, $f((i_1,j))<f((i_2,j))$ if and only if $i_1<i_2$.
\end{itemize} 
For $\pmb t=(t_{ij})\in ST(\lambda,\mathbb C)$, 
 put
\[
 V(\pmb t)=
\left\{\left.
\left(t_{f^{-1}(1)},t_{f^{-1}(2)},\ldots,t_{f^{-1}(n)}\right)\in \mathbb C^{n}\,\right|\,
f\in \mathcal{SF}(\lambda)
\right\}.
\] 
We write ${\pmb w} \preceq_s \pmb t$ for ${\pmb w}=(w_1,w_2,\ldots,w_m)\in \mathbb C^m$
 if there exists $(v_1,v_2,\ldots,v_{n})\in V(\pmb t)$ satisfying the following:
 for all $1\le k\le m$, there exist $1\le h_k\le m$ and $l_k\ge 0$ such that  
\begin{itemize}
\item[(i)]
 $w_k=v_{h_k}+v_{h_k+1}+\cdots +v_{h_k+l_k}$,
 \item[(ii)]
 there are no $(i_1,i_2;j_1,j_2)$ with $i_1<i_2$ and $j_1<j_2$ such that $\{t_{i_1j_1},t_{i_1j_2},t_{i_2j_2}\}\subset\{v_{h_k},v_{h_k+1},\ldots ,v_{h_k+l_k}\}$, and
\item[(iii)]
 $\bigsqcup^{m}_{k=1}\{h_k,h_k+1,\ldots,h_k+l_k\}=\{1,2,\ldots,n\}$.
\end{itemize}
 Then, by the definition, we have 
\begin{equation}
\label{for:SchurtoMZV}
 \zeta_\lambda^Q({\pmb s})
=\sum_{{\pmb t} \,\preceq_s\, {\pmb s}}2^{m({\pmb t})}\zeta({\pmb t}),
\end{equation} 
where $m(\pmb t)$ is positive integer depend on $\pmb t$. More precisely, $m(\pmb t)$ is depended on the way to change the comma $,$ to the plus $+$.
Moreover, by an inclusion-exclusion principle,
 one can also obtain its ``dual'' expression 
\begin{equation}
\label{for:SchurtoMZSV}
 \zeta_\lambda^Q({\pmb s})
=\sum_{{\pmb t} \,\preceq_s\, {\pmb s}}(-1)^{n-{\rm dep}({\pmb t})}2^{m({\pmb t})}\zeta^{\star}({\pmb t}).
\end{equation} 
Combining (\ref{for:SchurtoMZV}) and (\ref{for:SchurtoMZSV}) with identity (\ref{pq}), we can decompose a Schur $P$-multiple zeta function into linear combination of the multiple zeta (star) functions.

\begin{exam}

 For ${\pmb s}=(s_{ij})\in ST((3,1),\mathbb{C})$, we have 
\begin{align*}
 V({\pmb s})
&=\{
(s_{11},s_{12},s_{13},s_{22}),
(s_{11},s_{12},s_{22},s_{13})
\}.
\end{align*}
 One can confirm that ${\pmb t} \preceq_s {\pmb s}$ if and only if ${\pmb t}$ is one of the following:
\begin{align*}
& (s_{11},s_{12},s_{13},s_{22}),(s_{11}+s_{12},s_{13},s_{22}),
 (s_{11},s_{12}+s_{13},s_{22}),(s_{11},s_{12},s_{13}+s_{22}),\\
& (s_{11}+s_{12}+s_{13},s_{22}),(s_{11}+s_{12},s_{13}+s_{22}),(s_{11},s_{12}+s_{13}+s_{22}),\\
& (s_{11},s_{12},s_{22},s_{13}), (s_{11}+s_{12},s_{22},s_{13}),(s_{11},s_{12}+s_{22},s_{13}).
\end{align*} 
 This shows that when ${\pmb s}\in W_{(3,1)}^Q$
\begin{align*}
\ytableausetup{boxsize=normal,aligntableaux=center}
 \ytableaushort{{s_{11}}{s_{12}}{s_{13}},{\none}{s_{22}}}\, 
=&16\zeta(s_{11},s_{12},s_{13},s_{22})+8\zeta(s_{11}+s_{12},s_{13},s_{22})+8\zeta(s_{11},s_{12}+s_{13},s_{22})\\
&+16\zeta(s_{11},s_{12},s_{13}+s_{22})+4\zeta(s_{11}+s_{12}+s_{13},s_{22})\\
&+8\zeta(s_{11}+s_{12},s_{13}+s_{22})+4\zeta(s_{11},s_{12}+s_{13}+s_{22})\\
&+16\zeta(s_{11},s_{12},s_{22},s_{13})+8\zeta(s_{11}+s_{12},s_{22},s_{13})+8\zeta(s_{11},s_{12}+s_{22},s_{13})\\
=&16\zetas(s_{11},s_{12},s_{13},s_{22})-8\zetas(s_{11}+s_{12},s_{13},s_{22})-8\zetas(s_{11},s_{12}+s_{13},s_{22})\\
&-16\zetas(s_{11},s_{12},s_{13}+s_{22})+4\zetas(s_{11}+s_{12}+s_{13},s_{22})\\
&+8\zetas(s_{11}+s_{12},s_{13}+s_{22})+4\zetas(s_{11},s_{12}+s_{13}+s_{22})\\
&+16\zetas(s_{11},s_{12},s_{22},s_{13})-8\zetas(s_{11}+s_{12},s_{22},s_{13})-8\zetas(s_{11},s_{12}+s_{22},s_{13}).
\end{align*} 
\end{exam}
\begin{exam}It holds that
\label{qtozeta}
\begin{align*}
    \zeta_{(r)}^Q\left(\ytableaushort{{s_{11}}{\cdots}{s_{1r}}}\right)&=\sum_{\pmb \ell}2^{{\rm dep}({\pmb \ell})}\zeta(\pmb \ell),\\
    \zeta_{(r)}^Q\left(\ytableaushort{{s_{11}}{\cdots}{s_{1r}}}\right)&=\sum_{\pmb \ell}(-1)^{r-{\rm dep({\pmb \ell})}}2^{{\rm dep}({\pmb \ell})}\zetas(\pmb \ell),
\end{align*}
where $\pmb \ell$ runs over all indices of the form
$\pmb \ell = (s_{11}\square s_{12}\square\cdots\square s_{1r})$
in which each $\square$ is filled by the comma $,$ or the plus $+$. 
\end{exam}
By \eqref{pq}, Schur $P$-multiple zeta functions also can be decomposed into a linear combination of multiple zeta (star) functions.

We next give short observation for a relation between Schur $Q$-multiple zeta values and Two-One formula conjectured by Ohno and Zudilin \cite{oz}, proved by Zhao \cite{z16}. 
\begin{theorem}[Two-One formula \cite{oz,z16}]
\label{twoone}
For a non-negative integer $k$, we denote $\mu_{2k+1}=(1,\{2\}^k)$. Then
for any admissible index $\pmb k= (k_1,\ldots,k_r)$ with odd entries $k_1,\ldots,k_r$, the following identities are valid:
\begin{align*}
\zetas(\mu_{k_1},\ldots,\mu_{k_r}) &=\sum_{\pmb \ell\preceq\pmb k}2^{{\rm dep}({\pmb \ell})}\zeta(\pmb \ell),\\
    &=\sum_{\pmb \ell\preceq\pmb k}(-1)^{r-{\rm dep({\pmb \ell})}}2^{{\rm dep}({\pmb \ell})}\zetas(\pmb \ell),
\end{align*}
where the sum $\displaystyle{\sum_{\pmb \ell\preceq\pmb k}}$ runs over all indices of the form
$\pmb \ell = (k_{1}\square k_{2}\square\cdots\square k_{r})$
in which each $\square$ is filled by the comma $,$ or the plus $+$. 
\end{theorem}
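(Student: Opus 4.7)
The plan is as follows. The key observation is that expanding $\zetas$ via its standard expression as a sum of $\zeta$'s,
\[
\zetas(k_1,\ldots,k_r)=\sum_{\pmb m\preceq(k_1,\ldots,k_r)}\zeta(\pmb m),
\]
turns the left-hand side of the first identity into a sum of ordinary multiple zeta values over \emph{all} refinements of the composite index $(\mu_{k_1},\ldots,\mu_{k_r})$, including refinements \emph{within} each block $\mu_{k_i}=(1,\{2\}^{m_i})$, whereas the right-hand side is a restricted sum over refinements only at the $r-1$ junctions between distinct blocks. Thus the substantive content of the Two-One formula is that the within-block refinements collapse into the factor $2^{{\rm dep}(\pmb\ell)}$ attached to each junction-only refinement.

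First, I would write $\zetas(\mu_{k_1},\ldots,\mu_{k_r})$ as an iterated integral of the differential forms $\omega_0=dt/t$ and $\omega_1=dt/(1-t)$ over the standard simplex, where each block $\mu_{k_i}=\mu_{2m_i+1}$ contributes the subword $\omega_1(\omega_0\omega_1)^{m_i}$. Then I would systematically apply the partial-fraction identity $\tfrac{1}{t(1-t)}=\tfrac{1}{t}+\tfrac{1}{1-t}$ inside each block. This on one hand merges the within-block refinements into a single expression per block, and on the other hand produces a factor of $2$ for each block. Summing over the $2^{r-1}$ choices of comma versus plus at the block junctions then reproduces the right-hand side of the first identity.

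Second, for the signed $\zetas$ version of the formula, I would invoke the standard Möbius inversion
\[
\zeta(\pmb m)=\sum_{\pmb n\preceq\pmb m}(-1)^{{\rm dep}(\pmb m)-{\rm dep}(\pmb n)}\zetas(\pmb n)
\]
and substitute into the first identity. After interchanging the order of summation and re-indexing, the signs collapse into $(-1)^{r-{\rm dep}(\pmb \ell)}$, yielding the second identity with the same factor $2^{{\rm dep}(\pmb \ell)}$ preserved from the first.

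The main obstacle is the combinatorial step of showing that the partial-fraction expansion within each block $\mu_{k_i}$ indeed collapses cleanly into a single contribution of coefficient $2$, independently of how the adjacent blocks are handled. This is delicate because the block lengths $m_i$ vary and the within-block expansions interact nontrivially at the junctions with neighboring blocks. Zhao's proof in \cite{z16} circumvents this direct analysis by an inductive argument built on known evaluations for small cases together with identities among regularized values; adopting that strategy would be my fallback if the direct integral-manipulation route becomes too intricate.
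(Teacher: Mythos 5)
You should first note that the paper offers no proof of this statement at all: Theorem \ref{twoone} is imported verbatim as a known result, conjectured by Ohno--Zudilin \cite{oz} and proved by Zhao \cite{z16}, and the paper only \emph{uses} it (together with Example \ref{qtozeta}) to deduce Theorem \ref{Qstar}. So any comparison is between your sketch and Zhao's published proof, not something internal to this paper. Within your sketch, the second half is sound: the passage from the first identity to the second is purely formal, since substituting $\zeta(\pmb m)=\sum_{\pmb n\preceq\pmb m}(-1)^{{\rm dep}(\pmb m)-{\rm dep}(\pmb n)}\zetas(\pmb n)$ and using $\sum_{j=0}^{r-m}\binom{r-m}{j}(-2)^{m+j}=(-1)^{r}2^{m}$ collapses the double sum exactly as claimed.

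The genuine gap is in the first identity, and it sits exactly where you flag it. The partial-fraction step $\frac{1}{t(1-t)}=\frac{1}{t}+\frac{1}{1-t}$ only gives you the iterated-integral representation of $\zetas(\mu_{k_1},\ldots,\mu_{k_r})$ as the integral of a word in $\omega_0$, $\omega_1$ and $\omega_0+\omega_1$; expanding that word multilinearly merely recovers the defining expansion of $\zetas$ as the sum of $\zeta(\pmb m)$ over \emph{all} refinements of the full index $(1,\{2\}^{m_1},\ldots,1,\{2\}^{m_r})$, with all coefficients equal to $1$. It does not, by any regrouping, produce the junction-only sum with coefficients $2^{{\rm dep}(\pmb\ell)}$. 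Indeed the identity cannot be a formal regrouping of terms: already for $r=1$, $k_1=3$ it reads $\zetas(1,2)=\zeta(1,2)+\zeta(3)=2\zeta(3)$, i.e.\ it contains Euler's relation $\zeta(1,2)=\zeta(3)$, a genuine linear relation among distinct multiple zeta values; more generally the number of terms on the left, $2^{n-1}$ with $n=(k_1+\cdots+k_r+r)/2$, does not match the total coefficient mass $2\cdot 3^{r-1}$ on the right. So the ``delicate combinatorial step'' you defer is not a technicality — it is the entire theorem, which is why it stood as a conjecture from 2008 until Zhao's 2016 proof via generating functions of multiple harmonic sums. Your fallback of invoking \cite{z16} is in fact what the paper does, and is the only correct course here; the direct partial-fraction route as described does not close.
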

Combining Theorem \ref{twoone} with Lemma \ref{qtozeta}, we have the following theorem.
\begin{theorem}
\label{Qstar}
For $r$-tuple $(k_1,\ldots,k_r)$ of positive odd integers with $k_r\ge3$,
\begin{align*}
    \zeta_{(r)}^Q\left(\ytableaushort{{k_1}{k_2}{\cdots}{k_r}}\right)&=\zetas(\mu_{k_1},\ldots,\mu_{k_r})\\
    &=\zetas(1,\{2\}^{\frac{k_1-1}{2}},\ldots,1,\{2\}^{\frac{k_r-1}{2}}).
\end{align*}
\end{theorem}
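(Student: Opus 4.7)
The plan is to derive this theorem as an immediate consequence of chaining the two identities already established just above: the row-shape specialization of the decomposition into multiple zeta values (Example/Lemma \ref{qtozeta}) and the Two-One formula (Theorem \ref{twoone}).

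First, I would apply Example \ref{qtozeta} to the one-row tableau of shape $(r)$ with entries $k_1,\ldots,k_r$. This yields
\[
\zeta_{(r)}^Q\!\left(\ytableaushort{{k_1}{\cdots}{k_r}}\right)=\sum_{\pmb\ell}2^{\mathrm{dep}(\pmb\ell)}\zeta(\pmb\ell),
\]
where $\pmb\ell$ runs over all indices obtained from $(k_1\square k_2\square\cdots\square k_r)$ by replacing each $\square$ by either a comma or a plus sign. Convergence of $\zeta^Q_{(r)}$ on the left, and admissibility of every $\zeta(\pmb\ell)$ on the right, both reduce to the single requirement that the last entry exceed $1$; this is guaranteed by the hypothesis that each $k_i$ is a positive odd integer with $k_r\ge 3$, so the corner condition of Lemma \ref{lem:convergence} is satisfied.

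Next, I would invoke Theorem \ref{twoone} on the admissible odd-entry index $\pmb k=(k_1,\ldots,k_r)$. The summation range $\pmb\ell\preceq\pmb k$ appearing in Theorem \ref{twoone} is described by exactly the same rule — inserting commas or plus signs between the $k_i$ — and the coefficient $2^{\mathrm{dep}(\pmb\ell)}$ is identical. Consequently, the sum obtained in the previous step equals $\zetas(\mu_{k_1},\ldots,\mu_{k_r})$. The second equality in the theorem is then pure unfolding of notation: $\mu_{k_i}=(1,\{2\}^{(k_i-1)/2})$ by the very definition $\mu_{2k+1}=(1,\{2\}^k)$, and concatenating produces the stated expression.

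There is no serious obstacle here; the only point that warrants verification is that the general $\preceq_s$ relation of Section \ref{basicprop}, when restricted to a single-row shape, degenerates to the $\preceq$ relation of Theorem \ref{twoone}, and that the exponent $m(\pmb t)$ in \eqref{for:SchurtoMZV} reduces to $\mathrm{dep}(\pmb\ell)$ in that situation. For a shape with a single row, condition (ii) in the definition of $\preceq_s$ is vacuous because no pair $(i_1,i_2)$ with $i_1<i_2$ exists, so the grouping really is nothing more than partitioning the entries into consecutive blocks; both facts are in effect already recorded in Example \ref{qtozeta}. The theorem then follows by transitivity of the two equalities.
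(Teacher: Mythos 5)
Your proposal is correct and is exactly the paper's argument: the paper proves Theorem \ref{Qstar} by combining Example \ref{qtozeta} (the row-shape decomposition $\zeta_{(r)}^Q=\sum_{\pmb\ell}2^{\mathrm{dep}(\pmb\ell)}\zeta(\pmb\ell)$) with the Two-One formula of Theorem \ref{twoone}, just as you do. Your additional remarks on admissibility and on the degeneration of $\preceq_s$ for a one-row shape are sound elaborations of what the paper leaves implicit.
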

This theorem gives a non-trivial identity between a single Schur $Q$-multiple zeta value and multiple zeta star value.
\begin{cor}
\label{113}
For a positive integer $k\ge4$
\[\zeta_{(k-2)}^Q\left(\ytableaushort{{1}{\cdots}{1}{3}}\right)=\frac{k-1}{2}\zeta_{(1)}^Q\left(\ytableaushort{{k}}\right).\]
\end{cor}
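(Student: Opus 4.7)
The plan is to reduce the identity to the classical sum formula for multiple zeta values. First, since the index $(1,\ldots,1,3)$ has all odd entries with last entry $3\ge 3$, Theorem~\ref{Qstar} applies; using $\mu_1=(1)$ and $\mu_3=(1,2)$, it yields
\[
\zeta_{(k-2)}^Q\left(\ytableaushort{{1}{\cdots}{1}{3}}\right)
=\zetas(\mu_1,\ldots,\mu_1,\mu_3)
=\zetas(\{1\}^{k-2},2).
\]

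I would then expand $\zetas(\{1\}^{k-2},2)$ via the standard stratification of the region $n_1\le n_2\le\cdots\le n_{k-1}$: choosing which of the $k-2$ inequalities are strict amounts to partitioning the index $(\{1\}^{k-2},2)$ into $r'$ consecutive blocks and summing the entries within each block. Since the final entry $2$ always lies in the last block, the resulting index $(a_1,\ldots,a_{r'})$ satisfies $a_1+\cdots+a_{r'}=k$ with $a_i\ge 1$ for $i<r'$ and $a_{r'}\ge 2$; conversely, every such admissible composition of $k$ arises from exactly one pattern. Consequently,
\[
\zetas(\{1\}^{k-2},2)
=\sum_{r'=1}^{k-1}\ \sum_{\substack{a_1+\cdots+a_{r'}=k\\ a_i\ge 1,\ a_{r'}\ge 2}}\zeta(a_1,\ldots,a_{r'}).
\]

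By the classical sum formula for multiple zeta values, the inner sum equals $\zeta(k)$ for each depth $r'\in\{1,\ldots,k-1\}$, collapsing the double sum to $(k-1)\zeta(k)$. On the right-hand side, unpacking the definition gives $\zeta_{(1)}^Q(k)=\sum_{m\in\mathbb{N}'}|m|^{-k}=2\zeta(k)$, since each positive integer contributes via both its primed and unprimed representatives; hence $\frac{k-1}{2}\zeta_{(1)}^Q(k)=(k-1)\zeta(k)$, matching the left-hand side.

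The substantive ingredients are Theorem~\ref{Qstar} (already established in the excerpt) and the MZV sum formula; the only remaining task is the combinatorial check that the stratification of $\zetas$ produces each admissible composition of $k$ exactly once, which is a routine bijection and presents no real obstacle.
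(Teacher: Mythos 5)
Your proof is correct and follows what is evidently the paper's intended route: the corollary is stated as an immediate consequence of Theorem~\ref{Qstar}, which gives $\zeta_{(k-2)}^Q(\{1\}^{k-3},3)=\zetas(\{1\}^{k-2},2)$, and your evaluation $\zetas(\{1\}^{k-2},2)=(k-1)\zeta(k)$ via the stratification into admissible compositions plus the classical sum formula (Theorem~\ref{originalsum}) is the standard way to finish, matching $\tfrac{k-1}{2}\zeta_{(1)}^Q(k)=(k-1)\zeta(k)$.
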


\section{Pfaffian expression of the Schur $Q$-multiple zeta functions}
\label{pfaffiansection}
We defined the Schur $P$- and $Q$-multiple zeta functions in parallel, but there are some properties only the Schur $Q$-multiple zeta function satisfies as in the original Schur $P$- and $Q$-polynomials.
In fact, it is known that the original Schur $Q$-polynomial has a pfaffian expression \cite{ma}. 
In this section, we give a pfaffian expression of the Schur $Q$-multiple zeta function by following the Stembridge way in \cite{st}.
We first recall the definition of pfaffian by comparing with the determinant. 
Let $\mathfrak S_n$ be the symmetric group of degree $n$. Then, for a given square matrix $A=(a_{ij})_{1\leq i,j\leq n}$, the determinant $\det (A)$ is defined by
\[\det (A)=\sum_{\sigma\in \mathfrak S_{n}}
 {\rm sgn}(\sigma)\prod_{i=1}^n a_{i,\sigma(i)},\]
where ${\rm sgn}(\sigma)$ is the signature of $\sigma$.

For the definition of the pfaffian, we define a set $\mathfrak F_{2n}$, a subset of the symmetric group $\mathfrak S_{2n}$ of even degree,
\[\mathfrak F_{2n}=\left\{\pi\in\mathfrak S_{2n}\left| \begin{array}{c}
\pi(1) < \pi(3) < \cdots < \pi(2n-1),\\
\pi(1) < \pi(2), \pi(3) < \pi(4), \ldots,\pi(2n-1) < \pi(2n)
\end{array}\right.\right\}.\]
For an ordered $2n$-tuple $\pmb v=(v_1,\ldots,v_{2n})$ of vertices, we say that a set of edges $\{\pi=((v_i,v_j),\ldots,(v_k,v_l))\}$ on $\pmb v$ is a $1$-factor if each $v_i$ is incident with exactly one edge. 
\begin{exam}
The followings are $1$-factors of $\{1,2,3,4\}$.
\begin{center}
 \begin{tikzpicture} 
   \node at (1,1) {$\circ$};
    \node at (2,1) {$\circ$};
    \node at (3,1) {$\circ$};
    \node at (4,1) {$\circ$};
    \node at (13,1) {$\circ$};
    \node at (5.5,1) {$\circ$};
    \node at (6.5,1) {$\circ$};
    \node at (8.5,1) {$\circ$};
    \node at (7.5,1) {$\circ$};
    \node at (10,1) {$\circ$};
    \node at (11,1) {$\circ$};
    \node at (12,1) {$\circ$};
       \node at (1,0.6) {$1$};
    \node at (2,0.6) {$2$};
    \node at (3,0.6) {$3$};
    \node at (4,0.6) {$4$};
    \node at (13,0.6) {$4$};
    \node at (5.5,0.6) {$1$};
    \node at (6.5,0.6) {$2$};
    \node at (8.5,0.6) {$4$};
    \node at (7.5,0.6) {$3$};
    \node at (10,0.6) {$1$};
    \node at (11,0.6) {$2$};
    \node at (12,0.6) {$3$};
    \draw (1,1) to [out=70,in=110] (2,1);
 \draw (3,1) to [out=70,in=110] (4,1);
 \draw (5.5,1) to [out=70,in=110] (7.5,1);
 \draw (6.5,1) to [out=70,in=110] (8.5,1);
 \draw (10,1) to [out=70,in=110] (13,1);
 \draw (11,1) to [out=70,in=110] (12,1);
 \end{tikzpicture}
\end{center}
\end{exam}
By convention, we always list the edges of a $1$-factor $\pi$ in the form $(v_i,v_j)$ with $i<j$.
It is known that one can compose a bijection from $\mathfrak F_{2n}$ to the set of $1$-factors by
$\pi\mapsto \{(v_{\pi(1)},v_{\pi(2)}),\ldots,(v_{\pi(2n-1)},v_{\pi(2n)})\}$,
and \[|\mathfrak F_{2n}|=\frac{(2n)!}{2^nn!}.\]
Then, for a given $2n \times 2n$ upper triangular or anti-symmetric matrix $A=(a_{ij})_{1\leq i,j\leq2n}$,
 the pfaffian ${\rm pf} (A)$ of $A$ is defined by
\[
{\rm pf} (A) = \sum_{\pi\in \mathfrak F_{2n}}
 {\rm sgn}(\pi)\prod_{i=1}^n a_{\pi(2i-1),\pi(2i)}.
\]

Let $D=(V,E)$ be a directed acyclic graph with vertices $V$ and edges $E$, an assignment of a
direction to each edge with no directed cycles. 
We denote by $u\rightarrow v$ an edge directed from $u$ to $v$.
For any pair of vertices $u,v$, we denote by $\mathscr P(u, v)$ denote the set of directed
$D$-paths from $u$ to $v$ on $D$. If $u=u$, then $\mathscr P(u, u)$ is a set of a single path of length zero.

Let $I$ and $J$ be ordered sets of vertices of $D$. Then $I$ is
said to be $D$-compatible with $J$ if, whenever $u<u'$ in $I$ and $v>v'$ in $J$, every
path $P\in \mathscr P(u, v)$ intersects every path $Q \in \mathscr P(u', v')$.

For any vertex $u \in V$ and subset $I\subset V$, let $\mathscr P(u; I)$ denote the set of
directed paths from $u$ to any $v\in I$, and let 
\[Q_I(u) =  \sum_{P\in \mathscr P(u; I)} w(P),\]
where $w$ is a particular weight function.

For any $r$-tuple $\pmb u = (u_1,\ldots,u_r)$ of vertices,
let $\mathscr P_0(\pmb u;I)$ be the set of non-intersecting $r$-tuples of paths $P_i\in \mathscr P_{0}(u_i;I)$.
Then we define \[Q_I(u_1,\ldots,u_r) =  \sum_{(P_1,\ldots,P_r)\in \mathscr P(\pmb u; I)} w(P_1,\ldots,P_r).\]

\begin{theorem}[{\cite[Theorem 3.1]{st}}]
\label{lem31}
Let $\pmb u = (u_1,\ldots,u_r)$ be an $r$-tuple of vertices in a directed acyclic graph $D$, and assume that $r$ is even. If $I\subset V$ is a totally ordered subset of the vertices such that $u$ is $D$-compatible with $I$, then
\[Q_I(u_1.\ldots,u_r)={\rm pf}(Q_I(u_i,u_j))_{1\le i<j\le r}.\]
\end{theorem}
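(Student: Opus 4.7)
The plan is to prove the identity by a sign-reversing involution on signed path configurations, in the spirit of the Lindström--Gessel--Viennot argument adapted to pfaffians.

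First I would expand the pfaffian directly from the definition,
\[
{\rm pf}(Q_I(u_i,u_j))_{1\le i<j\le r}
= \sum_{\pi \in \mathfrak{F}_r} {\rm sgn}(\pi) \prod_{k=1}^{r/2} Q_I(u_{\pi(2k-1)}, u_{\pi(2k)}).
\]
Each factor $Q_I(u_{\pi(2k-1)}, u_{\pi(2k)})$ is itself a weighted sum over pairs of directed paths from the two indicated sources to vertices of $I$. Substituting these expansions and interchanging the order of summation recasts the right-hand side as a signed sum over pairs $(\pi, \pmb P)$, where $\pi \in \mathfrak{F}_r$ and $\pmb P = (P_1, \ldots, P_r)$ is an $r$-tuple of paths with $P_i \in \mathscr{P}(u_i; I)$, weighted by $\prod_k w(P_k)$ with sign ${\rm sgn}(\pi)$.

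Next I would split these configurations into non-intersecting and intersecting ones, and construct an involution $\Phi$ on the intersecting set. Using a canonical rule, say the lexicographically minimal pair $(i,j)$ for which $P_i$ and $P_j$ share a vertex and then the earliest such shared vertex along $P_i$, swap the tails of the two paths at that vertex. This produces a new tuple $\pmb P'$ in which $P_i'$ and $P_j'$ exchange their $I$-endpoints. Simultaneously I rematch $\pi$ so that the block structure adapts to the new endpoint assignment and then renormalize the result to put it back into $\mathfrak{F}_r$. The crucial verification is that $\Phi$ is self-inverse, preserves total weight, and reverses ${\rm sgn}(\pi)$.

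The fixed points of $\Phi$ are precisely the non-intersecting configurations, and here the $D$-compatibility hypothesis does the decisive work. If $i<j$ but the $I$-endpoints satisfied $v_i > v_j$, then compatibility would force $P_i$ and $P_j$ to intersect; thus each surviving non-intersecting tuple has its $I$-endpoints in the same order as $\pmb u$ and is paired with a single 1-factor of sign $+1$. Summing these contributions recovers exactly $Q_I(u_1, \ldots, u_r)$, yielding the identity.

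The main obstacle will be the sign analysis for $\Phi$. Unlike the determinant case, where a transposition visibly flips the sign of a permutation, here one must check that the rematching of $\pi$ lands back in $\mathfrak{F}_r$ after normalization and that the net effect on ${\rm sgn}(\pi)$ is exactly $-1$. This demands a careful case analysis depending on whether the intersecting paths $P_i, P_j$ belong to a single block of $\pi$ or to two different blocks, and a verification that the canonical choice of intersection pair and vertex is preserved under $\Phi$ so that applying $\Phi$ twice returns the original configuration.
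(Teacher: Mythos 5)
First, a point of orientation: the paper does not prove this statement at all --- it is quoted verbatim from Stembridge \cite[Theorem 3.1]{st} and used as a black box (the same tail-swapping idea reappears later in the paper's proof of Theorem \ref{pfaffian}). Your outline --- expand the pfaffian over $1$-factors, cancel intersecting configurations by a sign-reversing tail swap, and identify the survivors --- is indeed the strategy behind Stembridge's proof, so the plan is sound in spirit. But as written it has several genuine gaps beyond the sign analysis that you explicitly defer.

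Concretely: (i) in this theorem $Q_I(u_i,u_j)$ is by definition a sum over \emph{non-intersecting} pairs of paths, not over all pairs, so after expanding the pfaffian the configuration space consists of pairs $(\pi,\pmb P)$ in which the two paths inside each block of $\pi$ already avoid each other; your involution may only act on intersections \emph{between} blocks and must preserve blockwise non-intersection, which constrains both the swap and the re-matching of $\pi$. (ii) Your canonical rule ``lexicographically minimal intersecting pair $(i,j)$, then earliest shared vertex along $P_i$'' is not self-inverse: swapping the tails of $P_1$ and $P_3$ at $v$ can hand $P_1$ a later portion of $P_3$ that meets $P_2$, so that the lexicographically minimal intersecting pair of the new configuration becomes $(1,2)$ and $\Phi^2\neq\mathrm{id}$. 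One must instead pick the minimal index $i$ of a path meeting some other path, then the first vertex of $P_i$ lying on any other path, then the minimal such other index. (iii) The fixed-point analysis is incorrect: a totally non-intersecting tuple $\pmb P$ is compatible with \emph{every} $1$-factor $\pi\in\mathfrak F_r$, not with a single one of sign $+1$; the identity still comes out because $\sum_{\pi\in\mathfrak F_r}\mathrm{sgn}(\pi)=1$ (the pfaffian of the all-ones upper-triangular matrix), but that step must be supplied, and the role of $D$-compatibility is not to order the endpoints of the survivors but to guarantee that every configuration outside the fixed-point set actually contains a crossing on which $\Phi$ can act. Finally, the sign computation that you flag as ``the main obstacle'' is precisely the content that distinguishes the pfaffian case from the determinantal Lindstr\"om--Gessel--Viennot argument; leaving it as an obstacle to be overcome later means the proposal does not yet constitute a proof.
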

\begin{remark}[\cite{st}]
  In case $r$ is odd, we may adjoin a phantom vertex $u_{r+1}$ to $V$,
with no incident edges, and include $u_{r+1}$ in $I$. We order all other vertices of $I$ before $u_{r+1}$ and replace $r$ by $r+1$.
\end{remark}
Stembridge composed a directed graph $D$ corresponding to the Schur $Q$-functions \cite{st}. 
Moreover, Stembridge applied Theorem \ref{lem31} to obtain the following pfaffian expression of the Schur $Q$-polynomial.
\begin{theorem}[{\cite[Theorem 6.1]{st}}]
Let $\lambda=(\lambda_1,\ldots,\lambda_r)$ be a strict partition of even length.
Then 
\[Q_\lambda={\rm pf}(Q_{(\lambda_i,\lambda_j)})_{1\le i<j\le r}.\]
\end{theorem}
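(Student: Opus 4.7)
The plan is to apply Theorem \ref{lem31} to a directed acyclic graph $D$ whose non-intersecting path families enumerate $QSST(\lambda)$. Following Stembridge, I would first construct a planar lattice $D$ with vertices on a half-strip and with two types of edges—``horizontal'' and ``diagonal''—at each level $k\in\mathbb N$, both weighted by $x_k$, so that a directed path from a source to a sink encodes a weakly increasing word in the ordered alphabet $\mathbb N'=\{1'<1<2'<2<\cdots\}$ subject to the local constraints forced by PST1--PST3. The source $u_i$ is placed at the starting cell of row $i$ of $SD(\lambda)$, and a totally ordered set $I$ of sinks is placed on a vertical line far to the right.

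Under this encoding, an $r$-tuple of paths with $P_i$ running from $u_i$ and traversing $\lambda_i$ rightward steps corresponds to a filling of the shifted diagram, and non-intersection of the tuple translates exactly into the column strictness between consecutive rows. Thus non-intersecting $r$-tuples biject with $QSST(\lambda)$, and the product of edge weights along such a tuple equals the monomial associated to the corresponding $M$. Summation then gives $Q_I(u_1,\ldots,u_r)=Q_\lambda$, and the same construction restricted to two sources yields $Q_I(u_i,u_j)=Q_{(\lambda_i,\lambda_j)}$.

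Next I would verify that $\pmb u$ is $D$-compatible with $I$: if $i<i'$ in $\{1,\ldots,r\}$ and $v>v'$ in $I$, then any directed path from $u_i$ to $v$ must cross any directed path from $u_{i'}$ to $v'$ by a planarity argument, since the sources and sinks alternate along the boundary of the strip. Once compatibility is established, Theorem \ref{lem31} applies, and since $r$ is even by hypothesis, one obtains
\[
Q_\lambda=Q_I(u_1,\ldots,u_r)={\rm pf}\bigl(Q_I(u_i,u_j)\bigr)_{1\le i<j\le r}={\rm pf}\bigl(Q_{(\lambda_i,\lambda_j)}\bigr)_{1\le i<j\le r}.
\]

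The principal obstacle is designing the lattice so that the bijection between non-intersecting path families and $QSST(\lambda)$ is exact. The marked alphabet forces a two-fold description of each level (for primed versus unprimed letters), and one must check that the combined local incidence rules reproduce PST1--PST3 on the nose; in particular, paths starting at different sources must never ``share'' an edge without being forced to cross. Once the lattice is set up correctly, the remaining items—$D$-compatibility and extraction of $Q_{(\lambda_i,\lambda_j)}$ from two-source non-intersecting pairs—are routine geometric and combinatorial checks, and the pfaffian identity follows directly from Theorem \ref{lem31}.
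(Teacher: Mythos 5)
Your outline is exactly the Gessel--Viennot/Stembridge argument that the paper attributes to \cite{st}: encode the marked shifted tableaux as non-intersecting path families in a planar digraph, check $D$-compatibility of the sources with a totally ordered sink set, and invoke Theorem \ref{lem31}. The one ingredient you explicitly defer --- the precise lattice, in particular deleting the degenerate edges along the axes and splitting each boundary vertex $(0,j)$ into $(0,j)$ and $(0,j+1)'$ so that primed and unprimed letters reaching the boundary land on distinct, totally ordered sinks of $I_N$ (which is what makes both the bijection with $QSST_N(\lambda)$ and the compatibility hypothesis hold) --- is the whole content of the proof, and it is the construction the paper reproduces verbatim right after the theorem statement; so your plan is correct in route but leaves the only nontrivial step unexecuted.
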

Following the Stembridge way, we compose a directed graph $D$ corresponding to the Schur $Q$-multiple zeta functions.
We begin with the vertex
set of non-negative integers, and direct an edge $u\rightarrow v$ whenever $u-v = (1, 0), (0, 1)$, or $(1, 1)$. Subsequently, we delete the edges $u\rightarrow v$ involving points whose first
coordinates are both zero, as well as those whose second coordinates are
both zero. 
Finally, we split each of the vertices $(0,j)$ with $j> 1$ into two vertices, say $(0, j)$ and $(0, j+1)'$, so that the edge $(1,j + 1)\rightarrow (0,j)$ is redirected
to $(0, j+1)'$, while the edge $(1, j)\rightarrow (0, j)$ remains untouched. 
Fix a positive integer $N$, and let $\pmb u = (u_1,\ldots,u_r)$ be the $r$-tuple of vertices with
$u_i = (\lambda_i, N)$. Without loss of generality, we may assume that $r$ is even (if $r$
is odd, set $\lambda_{r+1}= 0$ and $u_{r+1} = (0, N+1)'$, and replace $r$
by $r+ 1$). 

Let $I_N=\{(0,0),(0,1),(0,2)',(0,2),\ldots,(0,N)',(0,N),(0,N+1)'\}$.
Then Stembridge showed that the element in $QSST_N(\lambda)$ can be identified with the non-intersecting paths in $\mathscr P_{0}(\pmb u;I_N)$, and $\pmb u$ is $D$-compatible with $I_N$. 

Let $v_i(P)=(v_{ij}(P))_{j=0}$ be the sequence of vertices representing the path $P\in \mathscr P_{0}(u_i;I_N)$. Successively, let $v_i^w(P)=(v_{ij}^w(P))_{j=0}$ be the sub-sequence of $v_i(P)$ with $v_{ij}(P)-v_{i(j+1)}(P)= (1,0)$ and $(1,1)$, let $v_i^1(P)=(v_{ij}^1(P))_{j=0}$ be the sub-sequence of $v_i(P)$ with $v_{ij}(P)-v_{i(j+1)}(P)= (0,1)$.
For $\pmb s\in ST(\lambda,\mathbb C)$, we assign the weight $w(v_{ij}^w(P))=v_2^{-s_{i(\lambda_i-j)}}$ to $v_{ij}^w(P)=(v_1,v_2)$ and we assign the weight $w(v_{ij}^1(P))=1$ to $v_{ij}^1(P)$. 
Then, we define \[w(P)=\prod_{v_{ij}(P)}w(v_{ij}(P)),\]
and for $(P_1,\ldots,P_r)\in \mathscr P(\pmb u;I_N)$,
\[w(P_1,\ldots,P_r)=\prod_{i=1}^rw(P_i).\]
\begin{exam}Let $\lambda=(6,5,3,1)$. Then, the Figure \ref{figure} is a $4$-tuple of paths $(P_1,P_2,P_3,P_4)\in \mathscr P(\{u_1,u_2\};I_N)\oplus\mathscr P(\{u_3,u_4\};I_N)$.
\label{35}
\begin{figure}[ht]
\begin{center}
 \begin{tikzpicture} 
     \node at (7,6.5) {$u_1$};
     \node at (6,6.5) {$u_2$};
     \node at (4,6.5) {$u_3$};
     \node at (2,6.5) {$u_4$};
     \node at (0.7,0.7) {$O$};
     \node at (7,0.7) {$6$};
     \node at (6,0.7) {$5$};
     \node at (4,0.7) {$3$};
     \node at (2,0.7) {$1$};
     \node at (3,0.7) {$2$};
     \node at (5,0.7) {$4$};
     \node at (0.3,2.5) {$(0,2)'$};
   \node at (0.3,3.5) {$(0,3)'$};
   \node at (0.3,5) {$(0,4)$};
   \node at (1,1) {$\bullet$};
   \node at (1,2) {$\bullet$};
   \node at (1,3) {$\bullet$};
   \node at (1,4) {$\bullet$};    
   \node at (2,1) {$\bullet$};
   \node at (2,2) {$\bullet$};
   \node at (2,3) {$\bullet$};  
   \node at (2,4) {$\bullet$};  
   \node at (3,1) {$\bullet$};
   \node at (3,2) {$\bullet$};
   \node at (3,3) {$\bullet$};  
  \node at (3,4) {$\bullet$};  
   \node at (4,1) {$\bullet$};
   \node at (4,2) {$\bullet$};
   \node at (4,3) {$\bullet$};  
   \node at (4,4) {$\bullet$};  
   \node at (5,1) {$\bullet$};
   \node at (5,2) {$\bullet$};
   \node at (5,3) {$\bullet$};  
   \node at (5,4) {$\bullet$};  
   \node at (6,1) {$\bullet$};
   \node at (6,2) {$\bullet$};
   \node at (6,3) {$\bullet$};  
   \node at (6,4) {$\bullet$};  
    \node at (7,1) {$\bullet$};
   \node at (7,2) {$\bullet$};
   \node at (7,3) {$\bullet$};  
   \node at (7,4) {$\bullet$}; 
   \node at (1,5) {$\bullet$};
   \node at (2,5) {$\bullet$};  
   \node at (3,5) {$\bullet$}; 
   \node at (4,5) {$\bullet$};
   \node at (5,5) {$\bullet$};
   \node at (6,5) {$\bullet$};  
   \node at (7,5) {$\bullet$}; 
   \node at (1,6) {$\bullet$};
   \node at (2,6) {$\bullet$};  
   \node at (3,6) {$\bullet$}; 
   \node at (4,6) {$\bullet$};
   \node at (5,6) {$\bullet$};
   \node at (6,6) {$\bullet$};  
   \node at (7,6) {$\bullet$}; 
   \draw[->,thick] (1,0.5)--(1,6.5);
   \draw[->,thick] (0.5,1)--(7.5,1);
\draw (6,6) -- (5,5) -- (4,4) -- (3,4) -- (2,4)--(1,3.5);
\draw[loosely dashdotted] (4,6) -- (3,5) -- (3,4) -- (3,3) -- (2,3)--(1,2.5); 
  \draw[dotted] (7,6) -- (7,5) -- (4,2) -- (1,2);
   \draw[dashed] (2,6) -- (2,5) -- (1,4) ;
 \end{tikzpicture}
\end{center}
\caption{$(P_1,P_2,P_3,P_4)$ in Example \ref{35}}
 \label{figure}
\end{figure}
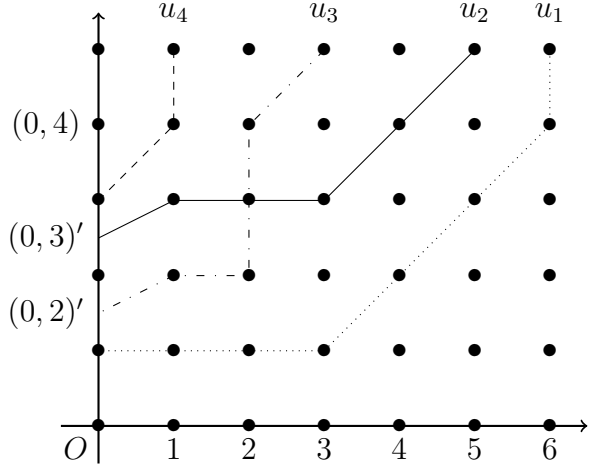

Let $(s_{ij})\in ST(\lambda,\mathbb C)$. The weight $w(P_i)$ are 
\begin{align*}
    w(P_1)&=\frac{1}{1^{s_{11}}1^{s_{12}}1^{s_{13}}2^{s_{14}}3^{s_{15}}4^{s_{16}}},&&w(P_2)=\frac{1}{3^{s_{22}}3^{s_{23}}3^{s_{24}}4^{s_{25}}5^{s_{26}}}\\
    w(P_3)&=\frac{1}{2^{s_{33}}3^{s_{34}}5^{s_{35}}},&&w(P_4)=\frac{1}{4^{s_{44}}}.
\end{align*}
\end{exam}
Then, due to the Stembridge composition, we find that
\[\zeta_\lambda^{Q,N}(\pmb s)=\sum_{(P_1,\ldots,P_r)\in \mathscr P_0(\pmb u;I_N)}w(P_1,\ldots,P_r).\]

For a set $X$, we define $ST^{\mathrm{diag}}(\lambda,X)=\{(t_{ij})\in ST(\lambda,X)~|~\text{$t_{ij}=t_{1k}$ if $j-i=k-1$}\}$.
\begin{theorem}[Pfaffian expression of the Schur $Q$-multiple zeta functions]
\label{pfaffian}
Let $\lambda=(\lambda_1,\ldots,\lambda_{r})$ be a strict partition into even parts with $\lambda_i\ge0$. 
Then for $\pmb s\in ST^{\rm diag}(\lambda,\mathbb C)$,
\[\zeta_{\lambda}^Q(\pmb s)={\rm pf}(M_\lambda),\]
where $M_\lambda=(a_{ij})$ is an $r\times r$ upper triangular matrix with \[a_{ij}=\zeta_{(\lambda_i,\lambda_j)}^Q(\pmb s_{(\lambda_i,\lambda_j)})\]
and 
\[\pmb s_{(\lambda_i,\lambda_j)}=\ytableaushort{{s_{ii}}{\cdots}{\cdots}{\cdots}{s_{it_i}},{\none}{s_{jj}}{\cdots}{s_{jt_j}}},\]
 where $t_i=i+\lambda_i-1$.
\end{theorem}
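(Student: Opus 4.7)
The plan is to apply Stembridge's Theorem~\ref{lem31} to the acyclic digraph $D$ constructed above, using the source tuple $\pmb u=(u_1,\ldots,u_r)$ with $u_i=(\lambda_i,N)$ and the totally ordered target set $I_N$. Two ingredients are already prepared: Stembridge's bijection identifies $QSST_N(\lambda)$ with the set $\mathscr P_0(\pmb u;I_N)$ of non-intersecting path configurations, and the weight prescription described just before the theorem was arranged so that $w(P_1,\ldots,P_r)=1/M^{\pmb s}$ for the corresponding tableau $M$. Consequently $Q_{I_N}(u_1,\ldots,u_r)=\zeta_\lambda^{Q,N}(\pmb s)$, and it remains to apply Theorem~\ref{lem31} and pass to the limit $N\to\infty$.

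To invoke Theorem~\ref{lem31} I would verify its two hypotheses. The $D$-compatibility of $\pmb u$ with $I_N$ is exactly the statement Stembridge used for the Schur $Q$-polynomial and is inherited here without change, since it depends only on $D$, $\pmb u$, and $I_N$. The more delicate point is that the weight function $w$ must be intrinsic to the edge rather than to the path: the prescription $w(v_{ij}^w(P))=v_2^{-s_{i(\lambda_i-j)}}$ a priori involves the row index $i$ of the path $P$, but the hypothesis $\pmb s\in ST^{\rm diag}(\lambda,\mathbb C)$ gives $s_{i c}=s_{1,c-i+1}$, so the exponent depends only on the diagonal of the underlying box, equivalently on the horizontal coordinate of the edge in $D$. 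After this reindexing, $w$ is a genuine edge-weight function on $D$, and this is the sole place where the diagonal hypothesis enters.

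With both hypotheses in place, Theorem~\ref{lem31} yields
\[
  \zeta_\lambda^{Q,N}(\pmb s)={\rm pf}\bigl(Q_{I_N}(u_i,u_j)\bigr)_{1\le i<j\le r}.
\]
Each two-source quantity $Q_{I_N}(u_i,u_j)$ is identified, through the same bijection applied to the two-row strict partition $(\lambda_i,\lambda_j)$ together with the relabeled variable tableau $\pmb s_{(\lambda_i,\lambda_j)}$ (whose entries still satisfy the diagonal condition inherited from $\pmb s$), with the truncated Schur $Q$-multiple zeta function $\zeta_{(\lambda_i,\lambda_j)}^{Q,N}(\pmb s_{(\lambda_i,\lambda_j)})$. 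Finally, I would let $N\to\infty$: the entrywise convergence provided by Lemma~\ref{lem:convergence}, together with the polynomial dependence of the Pfaffian on its entries, promotes the identity to $\zeta_\lambda^Q(\pmb s)={\rm pf}(M_\lambda)$. When $r$ is odd, one first reduces to the even case by the phantom-vertex trick noted in the remark following Theorem~\ref{lem31}.

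The main obstacle is the edge-weight well-definedness discussed in the second paragraph. The digraph $D$ does not know which row a path represents, so the naturally row-indexed weight prescription cannot be plugged into Stembridge's framework as written, and the diagonal hypothesis on $\pmb s$ is precisely the device that removes this ambiguity. Beyond this one verification, the proof is a direct translation of Stembridge's argument for the Schur $Q$-polynomial, with the classical indeterminate $x_j$ playing the role of the edge-intrinsic factor $v_2^{-s_{1,j}}$ in the multiple-zeta setting.
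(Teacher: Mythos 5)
Your proposal is correct and follows essentially the same route as the paper: the paper simply inlines the proof of Stembridge's Theorem \ref{lem31} by exhibiting the tail-swapping, sign-reversing involution at the right-most intersection point, and the diagonal hypothesis enters there exactly where you place it (the paper writes it as $a_{1j}=a_{2(j+1)}$ when matching the weights of the two swapped paths). Your reformulation --- that $\pmb s\in ST^{\rm diag}(\lambda,\mathbb C)$ makes the row-indexed weight a genuine edge weight so that Theorem \ref{lem31} applies verbatim, followed by the $N\to\infty$ limit --- is the same key observation, packaged as a citation rather than a re-derivation.
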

\begin{proof}
By the definition of pfaffian,
\begin{equation}
    \label{pffdef}
    {\rm pf}(M_\lambda)=\sum_{\pi\in\mathscr F_n}{\rm sgn}(\pi)\prod_{(i,j)\in\pi}\zeta_{(\lambda_i,\lambda_j)}^Q(\pmb s_{(\lambda_i,\lambda_j)}).
\end{equation}
It suffices to show that there exists a sign-reversing summand for each summand caused from $(P_1,\ldots,P_r)$ with at least one pair of intersecting paths.

We consider the right-most intersection point $(p,q)$ appearing in paths $(P_1,\ldots,P_r)$ for a 1-factor $\pi$.
For the sake of simplicity, we can assume that for 1-factor $\pi$ the two paths $P_1$ and $P_2$ cross at $(p,q)$ (Figure \ref{fig2}).
Then, the paths $(P_1,\ldots,P_r)$ gives \[S(\pi)={\rm sgn}(\pi)\prod_{j=1}^{t_1}a_{1j}^{-s_{1j}}\prod_{j=2}^{t_2}a_{2j}^{-s_{2j}}\prod_{i=3}^r\left(\prod_{j=i}^{t_i}a_{ij}^{-s_{ij}}\right),\]
where $a_{ij}$ is the $y$-coordinate of corresponding element of $v_{ij}^w(P_i)$.
On the other hand, we consider the $r$-tuple of paths $(\overline P_1,\overline{P}_2,P_3,\ldots,P_r)$.
Here, $\overline P_i$ follows $P_i$ until it meets the first intersection point $(p,q)$ and after that follows the other path $P_j$ to the end (Figure \ref{fig3}). 
\begin{figure}[ht]
\begin{tabular}{c}
\begin{minipage}{0.5\hsize}
\begin{center}
 \begin{tikzpicture} 
     \node at (7,6.5) {$u_1$};
     \node at (6,6.5) {$u_2$};
     \node at (4,6.5) {$u_3$};
     \node at (2,6.5) {$u_4$};
     \node at (4,0.5) {$p$};
     \node at (0.5,4) {$q$};
   \node at (1,1) {$\bullet$};
   \node at (1,2) {$\bullet$};
   \node at (1,3) {$\bullet$};
   \node at (1,4) {$\bullet$};    
   \node at (2,1) {$\bullet$};
   \node at (2,2) {$\bullet$};
   \node at (2,3) {$\bullet$};  
   \node at (2,4) {$\bullet$};  
   \node at (3,1) {$\bullet$};
   \node at (3,2) {$\bullet$};
   \node at (3,3) {$\bullet$};  
  \node at (3,4) {$\bullet$};  
   \node at (4,1) {$\bullet$};
   \node at (4,2) {$\bullet$};
   \node at (4,3) {$\bullet$};  
   \node at (4,4) {$\bullet$};  
   \node at (5,1) {$\bullet$};
   \node at (5,2) {$\bullet$};
   \node at (5,3) {$\bullet$};  
   \node at (5,4) {$\bullet$};  
   \node at (6,1) {$\bullet$};
   \node at (6,2) {$\bullet$};
   \node at (6,3) {$\bullet$};  
   \node at (6,4) {$\bullet$};  
    \node at (7,1) {$\bullet$};
   \node at (7,2) {$\bullet$};
   \node at (7,3) {$\bullet$};  
   \node at (7,4) {$\bullet$}; 
   \node at (1,5) {$\bullet$};
   \node at (2,5) {$\bullet$};  
   \node at (3,5) {$\bullet$}; 
   \node at (4,5) {$\bullet$};
   \node at (5,5) {$\bullet$};
   \node at (6,5) {$\bullet$};  
   \node at (7,5) {$\bullet$}; 
   \node at (1,6) {$\bullet$};
   \node at (2,6) {$\bullet$};  
   \node at (3,6) {$\bullet$}; 
   \node at (4,6) {$\bullet$};
   \node at (5,6) {$\bullet$};
   \node at (6,6) {$\bullet$};  
   \node at (7,6) {$\bullet$}; 
\draw (7,6) --(6,5)-- (6,4) -- (2,4)--(1,3.5);
  \draw[loosely dashdotted] (6,6) -- (3,3) -- (3,2) -- (1,2);
\draw[dotted] (4,6) -- (3,5) -- (3,4) -- (2,3) --(1,2.5); 
   \draw[dashed] (2,6) -- (2,5) -- (1,4) ;
 \end{tikzpicture}
\end{center}
\ \\[-40pt]
\caption{$(P_1,\ldots,P_r)$}
 \label{fig2}
\end{minipage}
\begin{minipage}{0.5\hsize}
\begin{center}
 \begin{tikzpicture} 
     \node at (7,6.5) {$u_1$};
     \node at (6,6.5) {$u_2$};
     \node at (4,6.5) {$u_3$};
     \node at (2,6.5) {$u_4$};
     \node at (4,0.5) {$p$};
     \node at (0.5,4) {$q$};
   \node at (1,1) {$\bullet$};
   \node at (1,2) {$\bullet$};
   \node at (1,3) {$\bullet$};
   \node at (1,4) {$\bullet$};    
   \node at (2,1) {$\bullet$};
   \node at (2,2) {$\bullet$};
   \node at (2,3) {$\bullet$};  
   \node at (2,4) {$\bullet$};  
   \node at (3,1) {$\bullet$};
   \node at (3,2) {$\bullet$};
   \node at (3,3) {$\bullet$};  
  \node at (3,4) {$\bullet$};  
   \node at (4,1) {$\bullet$};
   \node at (4,2) {$\bullet$};
   \node at (4,3) {$\bullet$};  
   \node at (4,4) {$\bullet$};  
   \node at (5,1) {$\bullet$};
   \node at (5,2) {$\bullet$};
   \node at (5,3) {$\bullet$};  
   \node at (5,4) {$\bullet$};  
   \node at (6,1) {$\bullet$};
   \node at (6,2) {$\bullet$};
   \node at (6,3) {$\bullet$};  
   \node at (6,4) {$\bullet$};  
    \node at (7,1) {$\bullet$};
   \node at (7,2) {$\bullet$};
   \node at (7,3) {$\bullet$};  
   \node at (7,4) {$\bullet$}; 
   \node at (1,5) {$\bullet$};
   \node at (2,5) {$\bullet$};  
   \node at (3,5) {$\bullet$}; 
   \node at (4,5) {$\bullet$};
   \node at (5,5) {$\bullet$};
   \node at (6,5) {$\bullet$};  
   \node at (7,5) {$\bullet$}; 
   \node at (1,6) {$\bullet$};
   \node at (2,6) {$\bullet$};  
   \node at (3,6) {$\bullet$}; 
   \node at (4,6) {$\bullet$};
   \node at (5,6) {$\bullet$};
   \node at (6,6) {$\bullet$};  
   \node at (7,6) {$\bullet$}; 
\draw (7,6) --(6,5)-- (6,4) --(4,4)-- (3,3) -- (3,2) -- (1,2);
  \draw[loosely dashdotted] (6,6) --(4,4)-- (2,4)--(1,3.5);
\draw[dotted] (4,6) -- (3,5) -- (3,4) -- (2,3) --(1,2.5); 
   \draw[dashed] (2,6) -- (2,5) -- (1,4) ;
 \end{tikzpicture}
\end{center}
\ \\[-40pt]
\caption{$(\overline P_1,\overline{P}_2,P_3,\ldots,P_r)$}
  \label{fig3}
\end{minipage}
\end{tabular}
\end{figure}

Let $\overline\pi$ be the 1-factor obtained by interchanging $1$ and $2$.
We need to check that for each 1-factor $(i,j)\in \overline{\pi}$, the paths $P_i$ and $P_j$ do
not intersect. It suffices to deal with the cases involving the modified paths $\overline{P}_i$ and $\overline{P}_j$. The definition of $v$ implies that there are no points of intersection other than  $v$ on the right-hand side of $v$. Hence, the path $P_k$ will intersect $P_1$ (resp. $P_2$) if and only if $P_k$ intersects $\overline P_2$ (resp. $\overline{P}_1$).  Thus, we confirm that $\overline{\pi}$ appears in (\ref{pffdef}) and
the paths $(\overline{P}_1,\overline{P}_2,\ldots,P_r)$ gives \[S(\overline\pi)={\rm sgn}(\overline\pi)\prod_{j=2}^{p+1}a_{2j}^{-s_{2j}}\prod_{j=p+1}^{t_1}a_{1j}^{-s_{1j}}\prod_{j=1}^{p}a_{1j}^{-s_{1j}}\prod_{j=p+2}^{t_2}a_{2j}^{-s_{2j}}\prod_{i=3}^r\left(\prod_{j=i}^{t_i}a_{ij}^{-s_{ij}}\right).\]

As ${\rm sgn}(\pi){\rm sgn}(\overline\pi)=-1$ and $a_{1j}=a_{2(j+1)}$, one can confirm that \[S(\pi)+S(\overline{\pi})=0,\]
and this proves the assertion.
\end{proof}
\begin{exam}Let $\lambda=(3,2,1,0)$. Then, if $(a_{j-i})=(s_{ij})\in ST^{\rm diag}(\lambda,\mathbb C)$,
\begin{align*}
    \zeta_\lambda^Q(\pmb s)=&{\rm pf}\begin{pmatrix}
0&\zeta_{(3,2)}^Q\left(\ytableaushort{{a_{0}}{a_1}{a_2},{\none}{a_{0}}{a_1}}\right)&\zeta_{(3,1)}^Q\left(\ytableaushort{{a_{0}}{a_{1}}{a_{2}},{\none}{a_{0}}}\right)&\zeta_{(3)}^Q\left(\ytableaushort{{a_0}{a_1}{a_2}}\right)\\
0&0&\zeta_{(2,1)}^Q\left(\ytableaushort{{a_0}{a_1},{\none}{a_0}}\right)&\zeta_{(2)}^Q\left(\ytableaushort{{a_0}{a_1}}\right)\\
0&0&0&\zeta_{(1)}^Q\left(\ytableaushort{{a_0}}\right)\\
0&0&0&0
\end{pmatrix}\\
=&\zeta_{(3,2)}^Q\left(\ytableaushort{{a_{0}}{a_1}{a_2},{\none}{a_{0}}{a_1}}\right)\zeta_{(1)}^Q\left(\ytableaushort{{a_0}}\right)-\zeta_{(3,1)}^Q\left(\ytableaushort{{a_{0}}{a_{1}}{a_{2}},{\none}{a_{0}}}\right)\zeta_{(2)}^Q\left(\ytableaushort{{a_0}{a_1}}\right)\\
&+\zeta_{(3)}^Q\left(\ytableaushort{{a_0}{a_1}{a_2}}\right)\zeta_{(2,1)}^Q\left(\ytableaushort{{a_0}{a_1},{\none}{a_0}}\right).
\end{align*}
\end{exam}
As in \cite{nt}, we can consider the extension of Theorem \ref{pfaffian}. 
{In preparation, we define 
\[\sum_{{\rm diag}}=\sum_{\substack{\sigma_j\in S_j\\j\in \mathbb Z}}\prod_{i\in \mathbb Z}\sigma_i\]
for $S_j$ being the set of permutations of the elements of $I(j)=\{(k,l)\in SD(\lambda)~|~l-k=j\}$. 
The sum $\displaystyle \sum_{{\rm diag}}$ means the sum taken over all permutations of all elements on each diagonal $I(j)$. We have to note that since the number of boxes in a fixed shifted tableau is finite, the product and the sum are finite. Actually, for $\lambda=(\lambda_1,\ldots,\lambda_r)$ we find that
\[\sum_{\substack{\sigma_j\in S_j\\j\in \mathbb Z}}\prod_{i\in \mathbb Z}\sigma_i=\sum_{\substack{\sigma_j\in S_j\\j\in \mathbb Z\cap[0,\lambda_1]}}\prod_{i=0}^{\lambda_1}\sigma_i.\]
}
Also, we define a set $W_{\lambda,H}^Q$ by
\[
 W_{\lambda,H}^Q
=
\left\{{\pmb s}=(s_{ij})\in ST(\lambda,\mathbb{C})\,\left|\,
\begin{array}{l}
 \text{$\Re(s_{ij})\ge 1$ for all $(i,j)\in SD(\lambda) \setminus H(\lambda)$} \\[3pt]
 \text{$\Re(s_{ij})>1$ for all $(i,j)\in H(\lambda)$}
\end{array}
\right.
\right\},
\]
where $H(\lambda)=\{(i,j)\in SD(\lambda)~|~i-j\in\{i-\lambda_i~|~1\le i\le r\} \}$.
Following the proof of Theorem \ref{pfaffian} and \cite[Lemma 3.1]{nt}, one can prove the following theorem.
\begin{theorem}
For any strict partition $\lambda=(\lambda_1,\ldots,\lambda_r)$ and {$\pmb s\in W_{\lambda,H}^Q$}, we have
\label{diag}
\[\sum_{{\rm diag}}\zeta_\lambda^Q(\pmb{s})=\sum_{{\rm diag}}{\rm pf}(M_\lambda),\]
where $M_\lambda$ is defined as in Theorem \ref{pfaffian}.
\end{theorem}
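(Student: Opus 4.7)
The strategy mirrors the proof of Theorem \ref{pfaffian}, with the hypothesis $\pmb s\in ST^{\rm diag}$ replaced by the symmetrization $\sum_{{\rm diag}}$. The plan is to expand both sides via the Stembridge lattice path model and run the tail-swapping involution; the sign-reversing cancellation will now be driven by the diagonal permutations themselves rather than by the diagonal-constancy of $\pmb s$.

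First, truncating at $N$, I would expand
\[
 \sum_{{\rm diag}}{\rm pf}(M_\lambda)
 = \sum_{{\rm diag}}\sum_{\pi\in \mathfrak{F}_r}{\rm sgn}(\pi)\prod_{(i,j)\in\pi}\zeta_{(\lambda_i,\lambda_j)}^{Q,N}(\pmb s_{(\lambda_i,\lambda_j)}),
\]
and apply Theorem \ref{lem31} to rewrite each two-row factor $\zeta_{(\lambda_i,\lambda_j)}^{Q,N}$ as a sum over non-intersecting pairs of paths in the Stembridge graph $D$. Combining these across the $1$-factor $\pi$ realizes the right-hand side as a signed sum over pairs $(\pi,(P_1,\ldots,P_r))$, where $(P_1,\ldots,P_r)$ is an $r$-tuple of paths from $\pmb u$ to $I_N$ whose $\pi$-partners are mutually non-intersecting (distinct partner-pairs may still cross).

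Next, I would run the involution used in the proof of Theorem \ref{pfaffian}: for any configuration with at least one globally intersecting pair $(P_a, P_b)$, locate the right-most crossing $(p,q)$, swap the tails to form $(\overline P_a,\overline P_b)$, and replace $\pi$ by the $1$-factor $\overline\pi$ obtained by transposing $a$ and $b$ among the matching partners, so that ${\rm sgn}(\pi){\rm sgn}(\overline\pi)=-1$. The key coordinate observation is that the swap interchanges the roles of the boxes $(a,k)$ and $(b,k+(b-a))$ along the post-$(p,q)$ tails, and each such pair lies on the common diagonal $j-i=k-a$ of $SD(\lambda)$. Hence the swap permutes the exponent assignment $\{s_{ij}\}$ only by transpositions within individual diagonals $I(d)$.

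This is precisely where $\sum_{{\rm diag}}$ intervenes: summing over all permutations on each diagonal, the contributions $S(\pi)$ and $S(\overline\pi)$ occupy the same orbit and cancel termwise, as in \cite[Lemma 3.1]{nt}. Only non-intersecting configurations survive, and by Stembridge's identification they correspond to $QSST_N(\lambda)$, yielding $\sum_{{\rm diag}}\zeta_\lambda^{Q,N}(\pmb s)$. The passage $N\to\infty$ is justified by absolute convergence on $W_{\lambda,H}^Q$: the enlargement from $SC(\lambda)$ to $H(\lambda)$ is exactly what is needed so that the estimate of Lemma \ref{lem:convergence} still applies uniformly after any permutation of the exponents within each diagonal. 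The main obstacle will be the bookkeeping that turns the heuristic ``the involution lives inside the diagonal orbits'' into a genuinely fixed-point-free sign-reversing involution on the augmented set of (configuration, $1$-factor, diagonal-permutation) triples; this parallels \cite[Lemma 3.1]{nt} combinatorially but must be reworked in the shifted-tableau setting.
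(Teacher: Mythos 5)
Your proposal is correct and follows essentially the same route the paper intends: it fleshes out the one-line indication ``follow the proof of Theorem \ref{pfaffian} and \cite[Lemma 3.1]{nt}'' by running the tail-swapping involution on triples (path tuple, $1$-factor, diagonal permutation), using the fact that the swap only permutes exponents within diagonals of constant content so that the symmetrization $\sum_{\rm diag}$ forces the cancellation that diagonal-constancy of $\pmb s$ provided before. Your remarks on the role of $W_{\lambda,H}^Q$ in justifying the limit $N\to\infty$ and on the remaining bookkeeping are consistent with the paper's treatment.
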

\section{Generalization to skew type}
\label{skewsection}
For strict partitions $\lambda,\mu$, we write $\mu\le \lambda$ if $SD(\mu)\subset SD(\lambda)$. For $\mu\le \lambda$, the {\it skew shifted diagram} of $\lambda/\mu$ is defined as $SD(\lambda/\mu) = SD(\lambda)\setminus SD(\mu)$.
 We use the same notations $T(\lambda/\mu,X),T^{\mathrm{diag}}(\lambda/\mu,X)$ for a set $X$,
and $PSST(\lambda/\mu)$ as in the previous sections.

 Let ${\pmb s}=(s_{ij})\in ST(\lambda/\mu,\mathbb{C})$.
 We define skew Schur $P$- and skew $Q$-multiple zeta functions associated with $\lambda/\mu$ by
\begin{equation}
\label{def:skewSPMZ}
 \zeta_{\lambda/\mu}^P({\pmb s})
=\sum_{M\in PSST(\lambda/\mu)}\frac{1}{M^{\pmb s}},
\end{equation}
and
\begin{equation}
\label{def:skewSQMZ}
 \zeta_{\lambda/\mu}^Q({\pmb s})
=\sum_{M\in QSST(\lambda/\mu)}\frac{1}{M^{\pmb s}}.
\end{equation}
Let $D$ be the directed graph defined above and \[I_N=\{(0,0),(0,1),(0,2)',(0,2),\ldots,(0,N)',(0,N),(0,N+1)'\}\] for a fixed positive integer $N$. We define two sequences of vertices $\pmb u=(u_1,\ldots,u_r)$ and $\pmb v=(v_1,\ldots,v_s)$ by $u_i=(\lambda_i,N)$ and $v_i=(\mu_i,0)$.
We define $\pmb v\oplus I_N$ by the union of $\pmb v$ and $I_m$, ordered so that each $v_i$ precedes each $v\in I_N$.
Then the shifted Young tableaux of shape $\lambda/\mu$ with max entry $N$ can be identified with the non-intersecting paths in $\mathscr P_{0}(\pmb u,\pmb v\oplus I_N)$, and $\pmb u$ is $D$-compatible with $\pmb v\oplus I_N$. 

Let $v_i(P)=(v_{ij}(P))_{j=0}$ be the sequence of vertices representing the path $P\in \mathscr P_{0}(u_i;\pmb v\oplus I_N)$. Successively, let $v_i^w(P)=(v_{ij}^w(P))_{j=0}$ be the sub-sequence of $v_i(P)$ with $v_{ij}(P)-v_{i(j+1)}(P)= (1,0)$ and $(1,1)$, let $v_i^1(P)=(v_{ij}^1(P))_{j=0}$ be the sub-sequence of $v_i(P)$ with $v_{ij}(P)-v_{i(j+1)}(P)= (0,1)$.
For $\pmb s\in ST(\lambda,\mathbb C)$, we assign the weight $w(v_{ij}^w(P))=v_2^{-s_{i(\lambda_i-j)}}$ to $v_{ij}^w(P)=(v_1,v_2)$ and we assign the weight $w(v_{ij}^1(P))=1$ to $v_{ij}^1(P)$. 
Then, we define \[w(P)=\prod_{v_{ij}(P)}w(v_{ij}(P)),\]
and for $(P_1,\ldots,P_r)\in \mathscr P(\pmb u;\pmb v\oplus I_N)$,
\[w(P_1,\ldots,P_r)=\prod_{i=1}^rw(P_i).\]
\begin{exam}
\label{43}
Let $\lambda=(6,5,3,1)$ and $\mu=(3,1)$. Then Figure \ref{skewfigure} is a $4$-tuple of non-intersecting paths $(P_1,P_2,P_3,P_4)\in\mathscr P_{0}(\pmb u,\pmb v\oplus I_5)$.

\begin{figure}[ht]
\begin{center}
 \begin{tikzpicture} 
     \node at (7,6.5) {$u_1$};
     \node at (6,6.5) {$u_2$};
     \node at (4,6.5) {$u_3$};
     \node at (2,6.5) {$u_4$};
      \node at (4,0.5) {$v_1$};
     \node at (2,0.5) {$v_2$};
     \node at (3,0.5) {$2$};
     \node at (5,0.5) {$4$};
     \node at (6,0.5) {$5$};
     \node at (7,0.5) {$6$};
     \node at (0.3,2.5) {$(0,2)'$};
   \node at (0.3,4) {$(0,3)$};
   \node at (0.3,5) {$(0,4)$};
   \node at (0.7,0.7) {$O$};
   \node at (1,1) {$\bullet$};
   \node at (1,2) {$\bullet$};
   \node at (1,3) {$\bullet$};
   \node at (1,4) {$\bullet$};    
   \node at (2,1) {$\bullet$};
   \node at (2,2) {$\bullet$};
   \node at (2,3) {$\bullet$};  
   \node at (2,4) {$\bullet$};  
   \node at (3,1) {$\bullet$};
   \node at (3,2) {$\bullet$};
   \node at (3,3) {$\bullet$};  
  \node at (3,4) {$\bullet$};  
   \node at (4,1) {$\bullet$};
   \node at (4,2) {$\bullet$};
   \node at (4,3) {$\bullet$};  
   \node at (4,4) {$\bullet$};  
   \node at (5,1) {$\bullet$};
   \node at (5,2) {$\bullet$};
   \node at (5,3) {$\bullet$};  
   \node at (5,4) {$\bullet$};  
   \node at (6,1) {$\bullet$};
   \node at (6,2) {$\bullet$};
   \node at (6,3) {$\bullet$};  
   \node at (6,4) {$\bullet$};  
    \node at (7,1) {$\bullet$};
   \node at (7,2) {$\bullet$};
   \node at (7,3) {$\bullet$};  
   \node at (7,4) {$\bullet$}; 
   \node at (1,5) {$\bullet$};
   \node at (2,5) {$\bullet$};  
   \node at (3,5) {$\bullet$}; 
   \node at (4,5) {$\bullet$};
   \node at (5,5) {$\bullet$};
   \node at (6,5) {$\bullet$};  
   \node at (7,5) {$\bullet$}; 
   \node at (1,6) {$\bullet$};
   \node at (2,6) {$\bullet$};  
   \node at (3,6) {$\bullet$}; 
   \node at (4,6) {$\bullet$};
   \node at (5,6) {$\bullet$};
   \node at (6,6) {$\bullet$};  
   \node at (7,6) {$\bullet$}; 
   \draw[->,thick] (1,0.5)--(1,6.5);
   \draw[->,thick] (0.5,1)--(7.5,1);
\draw (6,6) -- (5,5) -- (4,4) -- (4,3)--(3,2)--(2,2) --(2,1);
\draw[loosely dashdotted] (4,6) -- (3,5) --(3,4)-- (2,3)--(1,2.5); 
  \draw[dotted] (7,6) -- (7,5) -- (5,3) -- (5,2)--(4,2)--(4,1);
   \draw[dashed] (2,6) -- (2,5) -- (1,4) ;
 \end{tikzpicture}
\end{center}
\caption{$(P_1,P_2,P_3,P_4)$ in Example \ref{43}}
 \label{skewfigure}
\end{figure}
Let $(s_{ij})\in ST(\lambda/\mu,\mathbb C)$. The weight $w(P_i)$ are 
\begin{align*}
    w(P_1)&=\frac{1}{1^{s_{14}}3^{s_{15}}4^{s_{16}}},&&w(P_2)=\frac{1}{1^{s_{23}}2^{s_{24}}4^{s_{25}}5^{s_{26}}},\\
    w(P_3)&=\frac{1}{2^{s_{33}}3^{s_{34}}5^{s_{35}}},&&w(P_4)=\frac{1}{4^{s_{44}}}.
\end{align*}
\end{exam}

Then, we find that
\[\zeta_{\lambda/\mu}^{Q,N}(\pmb s)=\sum_{(P_1,\ldots,P_r)\in \mathscr P_0(\pmb u;\pmb v\oplus I_N)}w(P_1,\ldots,P_r).\]
Moreover, we can apply Stembridge result {\cite[Theorem 3.2]{st}}, which generalizes Theorem \ref{lem31}, and obtain the following pfaffian expression of the skew Schur $Q$-multiple zeta functions.
\begin{theorem}[Pfaffian expression of the skew Schur $Q$-multiple zeta functions]
\label{skewpfaffian}
Let $\lambda=(\lambda_1,\ldots,\lambda_{r})$, $\mu=(\mu_1,\ldots,\mu_{s})$ be strict partitions into with $\lambda_i\ge0$ and $2|r+s$. 
Then for $\pmb s\in ST^{\rm diag}(\lambda/\mu,\mathbb C)$,
\[\zeta_{\lambda/\mu}^Q(\pmb s)={\rm pf}\begin{pmatrix}
M_\lambda&H_{\lambda,\mu}\\
0&0
\end{pmatrix}
,\]
where $M_\lambda=(a_{ij})$ is an $r\times r$ upper triangular matrix with \[a_{ij}=\zeta_{(\lambda_i,\lambda_j)}^Q(\pmb s_{(\lambda_i,\lambda_j)}),\]
\[\pmb s_{(\lambda_i,\lambda_j)}=\ytableaushort{{s_{ii}}{\cdots}{\cdots}{\cdots}{s_{it_i}},{\none}{s_{jj}}{\cdots}{s_{jt_j}}},\]
 where $t_i=i+\lambda_i-1$ and $H_\lambda=(b_{ij})$ is an $r\times s$ matrix with \[b_{ij}=\zeta_{(\lambda_i-\mu_s-j+1)}^Q(s_{i(i+j+\mu_s-1)},\ldots,s_{it_i}).\]
\end{theorem}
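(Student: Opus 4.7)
The plan is to mimic the argument for Theorem \ref{pfaffian}, invoking Stembridge's generalization \cite[Theorem 3.2]{st} in place of Theorem \ref{lem31}. The set-up immediately preceding the statement already identifies $QSST_N(\lambda/\mu)$ with the set $\mathscr P_0(\pmb u; \pmb v\oplus I_N)$ of non-intersecting path tuples, under a weighting for which $w(P_1,\ldots,P_r) = 1/M^{\pmb s}$ on the corresponding tableau $M$. The compatibility of $\pmb u$ with $\pmb v\oplus I_N$ is also recorded. So what remains is to match the pfaffian of $\bigl(\begin{smallmatrix} M_\lambda & H_{\lambda,\mu}\\ 0 & 0\end{smallmatrix}\bigr)$ with this sum over non-intersecting tuples, and then let $N\to\infty$.

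First I would expand the pfaffian by the standard definition. A $1$-factor $\pi$ on $\{1,\ldots,r+s\}$ contributes zero unless each index in $\{r+1,\ldots,r+s\}$ is matched with some index in $\{1,\ldots,r\}$, since the lower-right block vanishes. Thus surviving $\pi$'s decompose into pairs of three possible types: internal pairs $(i,j)$ with $1\le i<j\le r$ contributing $a_{ij}=\zeta_{(\lambda_i,\lambda_j)}^{Q,N}(\pmb s_{(\lambda_i,\lambda_j)})$, and mixed pairs $(i,r+k)$ contributing $b_{ik}$. The one-row expression for $b_{ik}$ is, by Example \ref{qtozeta}, the weight sum $\sum w(P)$ over the unique family of paths $P\colon u_i\rightsquigarrow v_{s-k+1}$ in $D$; the length of this one-row function is $\lambda_i-\mu_s-k+1$, matching exactly the horizontal distance from $u_i$ to the appropriate $v$-vertex. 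Similarly $a_{ij}$, by Theorem \ref{pfaffian} applied to two rows, expands as the sum of $w(P_i)w(P_j)$ over pairs from $(u_i,u_j)$ to $I_N$ (intersections cancel by the two-row involution, as in Theorem \ref{pfaffian}).

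Multiplying out every $1$-factor thus produces a signed sum over all $r$-tuples of paths in $\mathscr P(\pmb u;\pmb v\oplus I_N)$. On this global sum I would run the Stembridge sign-reversing involution verbatim as in Theorem \ref{pfaffian}: locate the rightmost intersection $v$ between two paths $P_\alpha,P_\beta$ of the tuple, swap tails at $v$ to produce $(\overline P_\alpha,\overline P_\beta)$, and form the $1$-factor $\overline\pi$ obtained by exchanging the targets of $\alpha,\beta$ in $\pi$. The hypothesis $\pmb s\in ST^{\rm diag}(\lambda/\mu,\mathbb C)$ is precisely what guarantees that the two path weights agree before and after the swap (the identity $a_{1j}=a_{2(j+1)}$ used at the end of the proof of Theorem \ref{pfaffian} generalizes because entries along each shifted diagonal are equal), while ${\rm sgn}(\pi){\rm sgn}(\overline\pi)=-1$, so these summands cancel in pairs. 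The parity assumption $2\mid r+s$ ensures that $\mathfrak F_{r+s}$ and hence the pfaffian are well-defined. The surviving tuples are exactly the non-intersecting ones, giving $\zeta_{\lambda/\mu}^{Q,N}(\pmb s)$, and letting $N\to\infty$ under the absolute convergence from Lemma \ref{lem:convergence} completes the proof.

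The main obstacle is bookkeeping for the mixed swaps: when a path $P_\alpha\colon u_\alpha\rightsquigarrow I_N$ crosses a path $P_\beta\colon u_\beta\rightsquigarrow v_k$, tail-swapping yields $\overline P_\alpha\colon u_\alpha\rightsquigarrow v_k$ and $\overline P_\beta\colon u_\beta\rightsquigarrow I_N$, so the new configuration is a tuple for the $1$-factor obtained from $\pi$ by transferring the partner $r+k$ from $\beta$ to $\alpha$. One must verify that this always yields another valid surviving $1$-factor (every $v$-endpoint still paired with a $u$-endpoint, never two $v$-endpoints together), and that the involution is genuinely an involution across all three swap types; this last check is exactly the content of the skew case of Stembridge's Theorem 3.2, which is why that generalized theorem, rather than Theorem \ref{lem31}, is the correct tool.
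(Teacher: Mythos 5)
Your proposal is correct and follows essentially the same route as the paper, which gives no explicit proof here but simply identifies $\zeta_{\lambda/\mu}^{Q,N}$ with the weight sum over $\mathscr P_0(\pmb u;\pmb v\oplus I_N)$ and invokes Stembridge's Theorem 3.2. Your expansion of the pfaffian, the observation that only $1$-factors pairing each $v$-endpoint with a $u$-endpoint survive, and the tail-swapping involution using the diagonal condition on $\pmb s$ are exactly the details the paper leaves implicit.
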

\begin{exam}Let $\lambda=(3,2,1)$ and $\mu=(2)$. Then, if $(a_{j-i})=(s_{ij})\in ST^{\rm diag}(\lambda/\mu,\mathbb C)$
\begin{align*}
    \zeta_\lambda^Q(\pmb s)=&{\rm pf}\begin{pmatrix}
0&\zeta_{(3,2)}^Q\left(\ytableaushort{{a_{0}}{a_1}{a_2},{\none}{a_{0}}{a_1}}\right)&\zeta_{(3,1)}^Q\left(\ytableaushort{{a_{0}}{a_{1}}{a_{2}},{\none}{a_{0}}}\right)&\zeta_{(1)}^Q\left(\ytableaushort{{a_2}}\right)\\
0&0&\zeta_{(2,1)}^Q\left(\ytableaushort{{a_0}{a_1},{\none}{a_0}}\right)&1\\
0&0&0&0\\
0&0&0&0
\end{pmatrix}\\
=&-\zeta_{(3,1)}^Q\left(\ytableaushort{{a_{0}}{a_{1}}{a_{2}},{\none}{a_{0}}}\right)+\zeta_{(1)}^Q\left(\ytableaushort{{a_2}}\right)\zeta_{(2,1)}^Q\left(\ytableaushort{{a_0}{a_1},{\none}{a_0}}\right).
\end{align*}
\end{exam}
\section{Outside decomposition}
\label{outsidesec}
Hamel and Goulden proved a general determinant formula which expressed a Schur function as a determinant of skew Schur functions whose shapes are {\it strips} (\cite{hg}). 
Subsequently, Hamel proved expressions of Schur $Q$-functions as determinants or pfaffians associated with {\it outside decomposition} of shifted Young diagrams into {\it strips} (\cite{h96}).
In the study of multiple zeta function, Bachmann and Charlton proved general Jacobi-Trudi formulas for Schur multiple zeta functions for each {\it outside decomposition}.
In fact, they proved the Jacobi-Trudi formula for more general functions (\cite{bc}).

We first review the basic terminology of {\it outside decomposition} given by Hamel and Goulden (\cite{hg}).
For each box $\alpha$ of skew (shifted) diagram of $\lambda/\mu$, we define {\it content} of $\alpha$ as the quantity $j-i$ where $\alpha$ lies in row $i$ and in column $j$ of skew (shifted) diagram (referred to $(i,j)$ for convenient).
A strip in a skew shape diagram is a skew (shifted) diagram with an edgewise connected set of boxes that contains no $2\times2$ block of boxes. In other words, a strip has at most one box on each of its diagonals.
We say that the starting box of a strip is the box which is bottommost and leftmost in the strip and the ending box of a strip is the box which is topmost and rightmost in the strip.
\begin{defi}[Outside decomposition]
Suppose $(\theta_1,\ldots,\theta_r)$ are strips in a skew (shifted) diagram of $\lambda/\mu$ and each strip has a starting box on the left or bottom perimeter of the diagram and an ending box on the right or top perimeter of the diagram. Then if the disjoint union of these strips is the skew shape diagram of $\lambda/\mu$, we say the totally ordered set $(\theta_1,\ldots,\theta_r)$ is an outside decomposition of $\lambda/\mu$.
\end{defi}
\begin{exam}[$\lambda=(5,4,2,1)$]
\label{exout}
The following two are examples of outside decomposition $(\theta_1,\ldots,\theta_5)$ of $\lambda$.
\begin{center}
\begin{figure}[h]
\begin{tikzpicture}
      \node at (3.5,1) {$\ydiagram{5,4,2,1}$};
    \node at (1.5,2) {$\theta_1$};
        \node at (1.5,1.4) {$\theta_2$};
        \node at (1.5,0) {$\theta_3$};
        \node at (3.5,0.5) {$\theta_4$};
        \node at (4.2,0.5) {$\theta_5$};
\draw (1.7,2) -- (2.2,2);
 \draw (1.7,1.3)--(2.9,1.3)--(2.9,2);
\draw (1.7,0)--(2.2,0)--(2.2,0.6) -- (2.9,0.6);
 \draw (3.5,0.8)--(3.5,2);
 \draw (4.2,0.8)--(4.2,2)--(5,2);
       \node at (10.5,1) {$\ydiagram{5,1+4,2+2,3+1}$};
    \node at (8.3,2) {$\theta_1$};
        \node at (9,1.4) {$\theta_2$};
        \node at (10.5,0) {$\theta_4$};
        \node at (9.8,0.6) {$\theta_3$};
        \node at (11.8,0.5) {$\theta_5$};
\draw (8.5,2) -- (9.2,2);
 \draw (9.2,1.3)--(9.9,1.3)--(9.9,2);
 \draw (10,0.6)--(10.5,0.6)--(10.5,2);
 \draw (10.7,0)--(11.2,0)--(11.2,2)--(12,2);
 \draw (11.8,0.8)--(11.8,1.3);
\end{tikzpicture}
\end{figure}
\end{center}
\end{exam}
We now define an additional operation $\theta_i\#\theta_j$ of strips $\theta_i$ and $\theta_j$. 
The following procedure is well-defined by \cite[Property 2.4]{ha}.
\begin{description}
    \item[Case.1] Suppose $\theta_i$ and $\theta_j$ have some boxes with the same content. Slide $\theta_i$ along
top-left-to-bottom-right diagonals so that the box of content $k$ in $\theta_i$ is superimposed on
the box of content $k$ in $\theta_j$ for all $k\in\mathbb Z$. 
We define $\theta_i\#\theta_j$ to be the diagram obtained from this superposition by taking all boxes
between the ending box of $\theta_i$ and the starting box of $\theta_j$ inclusive.
\item[Case.2] Suppose $\theta_i$ and $\theta_j$ are two disconnected pieces and thus do not have any
boxes of the same content. The starting box of one will be to the right and/or above
the ending box of the other. To bridge the gap between $\theta_i$ and $\theta_j$, insert boxes from the
ending box of one to the starting box of the other so that these inserted boxes follow
the approached-from-the-left or approached-from-below arrangement as do other boxes
of the same content in the outside decomposition. If there is a content such that there is no
box of that content in the diagram (and therefore no determination of the direction from
which the box is approached), then arbitrarily choose from which direction boxes of
this content should be approached, fix this choice for all boxes of the same content in
that particular diagram, and bridge the gap between $\theta_i$ and $\theta_j$ accordingly. Define $\theta_i\#\theta_j$
as in Case 1 with the following additional conventions: if the ending box of $\theta_i$ is edge
connected to the starting box of $\theta_j$, and occurs below or to the left of it, then $\theta_i\#\theta_j=\emptyset$; if
the ending box of $\theta_i$ is not edge connected but occurs below or to the left of the starting
box of $\theta_j$, $\theta_i\#\theta_j$ is undefined.
\end{description}
If $\pmb s=(s_{ij})$ satisfies $s_{ij}=s_{k\ell}$ with $i-j=k-\ell$, then we may define operation $\pmb s_{\lambda_i}\#\pmb s_{\lambda_j}$ of $\pmb s_{\lambda_i}$ and $\pmb s_{\lambda_j}$ in the same manner with the operation $\theta_i\#\theta_j$. We note that since $\pmb s=(s_{ij})$ have constant entries on the diagonals, this procedure is well-defined.
\begin{exam}
For the outside decomposition of Young diagram $\lambda$ in Example \ref{exout} (the left figure of the example),
\begin{align*}
    \theta_1\#\theta_2&=\ytableaushort{{-1}{0}},\ \theta_2\#\theta_1=\ytableaushort{{1},{0}},\ \theta_1\#\theta_4=\emptyset,\ \theta_4\#\theta_1=\ytableaushort{{2},{1},{0}},\\
    \theta_1\#\theta_5&\text{ is undefined, and } \theta_5\#\theta_1=\ytableaushort{{\none}{3}{4},{\none}{2},01},
\end{align*}
where the numbers indicate contents.
\end{exam}
\begin{exam}
Let \[\pmb s=\ytableaushort{{a_0}{a_1}{a_2}{a_3}{a_4},{\none}{a_0}{a_1}{a_2}{a_{3}},{\none}{\none}{a_0}{a_1},{\none}{\none}{\none}{a_0}}.\]
For the outside decomposition of shifted Young diagram $\lambda$ in Example \ref{exout} (the right figure of the example),
\begin{align*}
    \theta_1\#\theta_2&=\ytableaushort{{a_0}},\ \theta_2\#\theta_1=\ytableaushort{{a_1},{a_0}},\ \theta_1\#\theta_4=\ytableaushort{{a_0}},\ \theta_4\#\theta_1=\ytableaushort{{a_3}{a_4},{a_2},{a_1},{a_0}},\\
    \theta_1\#\theta_5&\text{ is undefined, and } \theta_5\#\theta_1=\ytableaushort{{a_3},{a_2},{a_1},{a_0}}.
\end{align*}
\end{exam}
Hamal (\cite{h96}) generalized classical pfaffian expression of Schur $Q$-function involving outside decompositions.
To explain the result, we extend the strips of our outside decomposition to the main diagonal, let $\rho$ be a strip consisting of a single box of content $0$ so that $\rho=\begin{ytableau}0\end{ytableau}$, where the number indicates the content. This allows us to define $\overline{\theta_i}=\theta_i\#\rho$. Let $(\overline\theta_p,\overline\theta_q)$ be formed by juxtaposing $\overline\theta_p$ and $\overline\theta_q$ with their boxes of content 0 lying on the
main diagonal with that of $\overline\theta_p$ immediately above and to the left of $\overline\theta_q$.
\begin{exam}
The $\overline{\theta}_p$ and $(\overline{\theta}_p,\overline{\theta}_q)$ of shifted Young diagram $\lambda$ in Exmaple \ref{exout} (the right figure of the example) are $\overline{\theta_p}=\theta_p$ for $1\le p\le 4$ and 
\begin{align*}
    (\overline\theta_1,\overline\theta_2)&=\ytableaushort{{0}{1},\none{0}},\ (\overline\theta_2,\overline\theta_1)=\ytableaushort{{1},{0},\none{0}},\ (\overline\theta_2,\overline\theta_4)=\ytableaushort{\none{3}{4},12,01,\none0},\ (\overline\theta_4,\overline\theta_2)=\ytableaushort{{3}{4},2,1,01,\none0},\\
    \overline\theta_5&=\ytableaushort{3,2,1,0},\ (\overline\theta_4,\overline\theta_5)=\ytableaushort{3,2{3}{4},12,01,\none0},\ (\overline\theta_5,\overline\theta_4)=\ytableaushort{{3}{4},23,12,01,\none0},
\end{align*}
where the numbers indicate contents.
\end{exam}
\begin{theorem}[cf. {\cite[Theorem 4.3]{fk},\cite[Theorem 1.4]{h96}}]
Let $\lambda$ and $\mu$ be strict partitions with $\mu\le\lambda$.
Let $\theta =(\theta_1, \theta_2,\ldots, \theta_k, \theta_{k+1},\ldots,\theta_r)$ be an outside decomposition of $SD(\lambda/\mu)$, where $\theta_p$ includes a box on the main
diagonal of $SD(\lambda/\mu)$ for $1\le p\le k$ and $\theta_p$ does not for $k+1\le p\le r$. If $k$ is odd, we replace $\theta$ by $(\emptyset,\theta_1,\ldots,\theta_r)$.  
Then the Schur $Q$-multiple zeta functions satisfy the identity
\[\zeta_{\lambda/\mu}^Q(\pmb s)={\rm pf}\begin{pmatrix}
\zeta_{(\overline{\theta}_p,\overline{\theta}_q)}^Q(\pmb s_{(\overline{\theta}_p,\overline{\theta}_q)})&\zeta_{\theta_i\#\theta_{r+k+1-j}}^Q(\pmb s_{\theta_i\#\theta_{r+k+1-j}})\\
-{}^t(\zeta_{\theta_i\#\theta_{r+k+1-j}}^Q(\pmb s_{\theta_i\#\theta_{r+k+1-j}}))&0
\end{pmatrix}\]
with $1 \le p, q \le k$ and $k + 1\le j \le r$. Here
\[\zeta_{(\overline{\theta}_p,\overline{\theta}_q)}^Q(\pmb s_{(\overline{\theta}_p,\overline{\theta}_q)})= -\zeta_{(\overline{\theta}_q,\overline{\theta}_p)}^Q(\pmb s_{(\overline{\theta}_q,\overline{\theta}_p)})\]
and $\zeta_{(\overline{\theta}_p,\overline{\theta}_p)}^Q(\pmb s_{(\overline{\theta}_p,\overline{\theta}_q)})=0$.
\end{theorem}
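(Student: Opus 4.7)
The plan is to combine the lattice-path model of Section~\ref{skewsection} with the outside decomposition formalism of Hamel--Goulden and Hamel, substituting our multiple-zeta weights for the polynomial weights throughout. First, truncate: work with $\zeta_{\lambda/\mu}^{Q,N}$ and recover $\zeta_{\lambda/\mu}^{Q}$ at the end via the absolute convergence of Lemma~\ref{lem:convergence} on $W_\lambda^Q$, which legitimately exchanges the limit $N\to\infty$ with the finite-sum-of-products comprising the pfaffian.

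For each strip $\theta_p$ in $(\theta_1,\ldots,\theta_r)$, I assign a pair $(a_p,b_p)$ of vertices in the Stembridge graph $D$: $a_p$ is determined by the starting box of $\theta_p$ on the left/bottom perimeter of $SD(\lambda/\mu)$ and $b_p$ by its ending box on the right/top perimeter, with the convention that when $\theta_p$ contains a box on the main diagonal (i.e.\ $1\le p\le k$) the vertex $a_p$ lies in the diagonal set $I_N$, while for $k+1\le p\le r$ both $a_p$ and $b_p$ are fixed boundary vertices. The first key step is then to establish a weight-preserving bijection between $QSST_N(\lambda/\mu)$ and non-intersecting $r$-tuples of paths $(P_1,\ldots,P_r)$ with $P_p\in\mathscr P(a_p,b_p)$, with weights as in Section~\ref{skewsection}. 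This uses the hypothesis $\pmb s\in ST^{\mathrm{diag}}(\lambda/\mu,\mathbb C)$ in an essential way: reading a tableau strip-by-strip rather than row-by-row slides entries along diagonals, so well-definedness of the weight of each $P_p$ demands that $s_{ij}$ depend only on $j-i$, in harmony with the definition of $\pmb s_{\theta_i\#\theta_j}$.

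Next, I verify $D$-compatibility of the ordered endpoint sets. This is the main technical obstacle: one must show that the ordering of starting vertices $(a_1,\ldots,a_k)$ in $I_N$ and of endpoints $(b_{k+1},\ldots,b_r)$, inherited from the totally ordered outside decomposition, forces any two paths whose endpoints ``cross'' in that ordering to intersect. This follows from the strip property (no $2\times 2$ block, at most one box per diagonal) together with the fact that starting and ending boxes lie on the perimeter; the argument parallels \cite[\S 3]{hg} and \cite{h96} but needs to be re-run in the $D$-graph with its split $(0,j),(0,j+1)'$ vertices. Once compatibility is granted, Stembridge's pfaffian formula \cite[Theorem 3.2]{st} (with $k+(r-k)$ vertices where the last $r-k$ are paired with fixed endpoints $b_{k+1},\ldots,b_r$, treated via the phantom-vertex convention when $k$ is odd) delivers
\[
\zeta_{\lambda/\mu}^{Q,N}(\pmb s)=\mathrm{pf}\bigl(w(P_i,P_j)\bigr),
\]
whose block structure matches the claim: the upper-left $k\times k$ block involves two paths both terminating on the diagonal, which by the standard two-path manipulation (as in Theorem~\ref{pfaffian}) equals $\zeta_{(\overline{\theta}_p,\overline{\theta}_q)}^{Q,N}(\pmb s_{(\overline\theta_p,\overline\theta_q)})$; the off-diagonal block involves one diagonal-terminating path and one fixed-endpoint path, whose sum over non-intersecting pairs collapses, via the content-sliding operation, to the single-strip zeta function $\zeta^{Q,N}_{\theta_p\#\theta_{r+k+1-j}}(\pmb s_{\theta_p\#\theta_{r+k+1-j}})$; and the lower-right block vanishes because two fixed-endpoint paths with the relevant ordering are forced to intersect.

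The hardest step I anticipate is the clean identification of the off-diagonal block entries with $\zeta^{Q}_{\theta_i\#\theta_{r+k+1-j}}$: this requires tracking how the $\#$-operation on strips, which superposes two strips along diagonals and retains the segment between the ending box of one and the starting box of the other, matches exactly the lattice-path sum obtained by concatenating the portion of $P_i$ up to a splitting point with the portion of a path from $a_{r+k+1-j}$ to $b_{r+k+1-j}$, subject to the undefined/empty conventions in Case~2 of the $\#$-operation (which correspond respectively to zero entries and to the extended $\overline\theta$-construction). Once that dictionary is in place, pass to the limit $N\to\infty$ to conclude.
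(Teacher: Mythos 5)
The paper actually supplies no proof of this theorem: it is stated with only a ``cf.''\ citation to \cite[Theorem 4.3]{fk} and \cite[Theorem 1.4]{h96}, immediately before Section \ref{sumformsec}. Your outline reconstructs exactly the argument the authors intend and use elsewhere in the paper: truncate to level $N$, encode $QSST_N(\lambda/\mu)$ as non-intersecting tuples in the Stembridge digraph with one path per strip (diagonal strips terminating freely in $I_N$, non-diagonal strips with fixed endpoints), check $D$-compatibility, apply \cite[Theorem 3.2]{st}, identify the block entries via the $\#$-operation (where $\pmb s\in ST^{\mathrm{diag}}$ is indeed what makes the diagonal-sliding well defined, cf.\ Remark \ref{contentrem}), and let $N\to\infty$. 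This is the same machinery as the paper's proofs of Theorems \ref{pfaffian} and \ref{skewpfaffian}, so I regard your proposal as the intended proof rather than an alternative route. One small correction: the vanishing of the lower-right block is not because two fixed-endpoint paths are ``forced to intersect''; it is structural in Stembridge's Theorem 3.2 (no pairing between two fixed terminal vertices occurs in the relevant $1$-factors), so no intersection argument is needed there. You might also note that the statement itself is silent about convergence, so your appeal to Lemma \ref{lem:convergence} (implicitly assuming $\pmb s$ in a suitable domain such as $W_{\lambda}^{Q}$) is a hypothesis you are adding, reasonably, to justify the limit $N\to\infty$.
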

\section{Sum formula}
\label{sumformsec}
It is well known that the multiple zeta values of Euler-Zagier type satisfy the too many linear relations among multiple zeta values of Euler-Zagier type, such as the sum formula and duality formula. 
The following is the sum formula for multiple zeta values of Euler-Zagier type.
\begin{theorem}[Granville \cite{gr97}, Zagier] \label{originalsum}
 For positive integers $k$ and $r$ with $k>r$, we have
 \[
  \sum_{\substack{ k_{1}+\dots+k_{r}=k \\ k_1,\dots,k_{r-1}\ge1,k_{r}\ge2 }}
  \zeta(k_{1},\dots,k_{r})=\zeta(k).
 \]
\end{theorem}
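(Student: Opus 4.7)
This is the classical Granville--Zagier sum formula. I would prove it by following Granville's original partial-fraction argument, working entirely at the level of the defining series.

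First, expand each $\zeta(k_1,\ldots,k_r)$ on the left as $\sum_{1\le n_1<\cdots<n_r}\prod_i n_i^{-k_i}$ and exchange the finite outer sum over admissible compositions $(k_1,\ldots,k_r)$ of $k$ with the nested infinite sum over chains; the hypothesis $k>r$ supplies the absolute convergence needed to justify the swap. For a fixed chain $n_1<\cdots<n_r$, the inner sum is a coefficient of a rational generating function: a standard computation (summing each geometric series $\sum_{k_i\ge 1}(z/n_i)^{k_i}$ for $i<r$ and $\sum_{k_r\ge 2}(z/n_r)^{k_r}$ separately, then collecting) identifies it with
\[
\frac{1}{n_r}\,\bigl[z^{k-r-1}\bigr]\prod_{i=1}^{r}\frac{1}{n_i-z}.
\]

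Next, apply the partial-fraction identity
\[
\prod_{i=1}^{r}\frac{1}{n_i-z}=\sum_{i=1}^{r}\Bigl(\prod_{j\ne i}\frac{1}{n_i-n_j}\Bigr)\frac{1}{n_i-z},
\]
extract the coefficient of $z^{k-r-1}$ in each summand, and then sum the result over all chains $n_1<\cdots<n_r$. The symmetry of the coefficients $\prod_{j\ne i}(n_i-n_j)^{-1}$ in the free chain variables forces pairwise cancellation of all contributions indexed by $i<r$, while the $i=r$ contribution collapses to $\sum_{n\ge 1}n^{-k}=\zeta(k)$.

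The main obstacle will be this final telescoping: one has to organize the regrouping over chains carefully, typically by induction on the depth $r$ and repeated use of $(n_i-n_j)^{-1}+(n_j-n_i)^{-1}=0$ when two consecutive chain indices are swapped, while keeping track of boundary contributions. As alternatives, the sum formula is the specialization of the Ohno relation recalled in the Introduction (obtained by setting its auxiliary parameter to zero), and it also admits a clean iterated-integral proof in which the sum of the differential forms indexed by admissible binary words of weight $k$ and depth $r$ is identified with the single form $\omega_0^{\,k-1}\omega_1$ representing $\zeta(k)$.
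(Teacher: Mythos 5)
The paper does not prove this statement: it is quoted as a known classical result with citations to Granville and Zagier, so there is no in-paper argument to compare against and your proposal must stand on its own. The first half of your sketch is sound: expanding the left-hand side, swapping the finite sum over admissible compositions with the sum over chains, and identifying the inner sum with $\frac{1}{n_r}\bigl[z^{k-r-1}\bigr]\prod_{i=1}^{r}(n_i-z)^{-1}$ is exactly Granville's reduction. (Note only that the partial-fraction coefficients should be $A_i=\prod_{j\ne i}(n_j-n_i)^{-1}$, not $\prod_{j\ne i}(n_i-n_j)^{-1}$; your version is off by a factor $(-1)^{r-1}$.)

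The genuine gap is the final step, which is the entire content of the proof and which you both leave undone and misdescribe. It is not true that the contributions indexed by $i<r$ cancel pairwise while the $i=r$ contribution collapses to $\zeta(k)$. Already for $r=2$, with $A_1=(n_2-n_1)^{-1}$ and $A_2=-(n_2-n_1)^{-1}$, one computes
\[
\sum_{n_1<n_2}\frac{A_1}{n_2\,n_1^{k-2}}=\zeta(1,k-1)+\zeta(k),
\qquad
\sum_{n_1<n_2}\frac{A_2}{n_2^{\,k-1}}=-\zeta(1,k-1),
\]
so the single $i<r$ term does not vanish, the $i=r$ term equals $-\zeta(1,k-1)$ rather than $\zeta(k)$, and the theorem emerges only from the cancellation \emph{between} the two families of terms. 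Granville's actual argument supplies a separate combinatorial identity governing $\sum_i A_i\,n_i^{-(k-r)}$ after summation over chains (equivalently, a change of variables to consecutive differences that produces harmonic numbers and telescopes), and that is precisely what is missing here; "repeated use of $(n_i-n_j)^{-1}+(n_j-n_i)^{-1}=0$" does not by itself produce it, since the two terms being paired sit over different chains and carry different powers. Your fallback routes (specializing the Ohno relation, or the iterated-integral computation counting admissible words of fixed weight and depth) are legitimate and the former is consistent with the remark in the Introduction that the Ohno relation contains the sum formula, but as written they are pointers rather than proofs.
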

As in the classical case, we prove the sum formula for Schur $P$- and $Q$-multiple zeta values.
\begin{theorem}
\label{sumpq}
 For positive integers $k$ and $r$ with $k>r$, we have
 \[
  \sum_{\substack{ k_{1}+\dots+k_{r}=k \\ k_1,\dots,k_{r-1}\ge1,k_{r}\ge2 }}
  \zeta_{(r)}^Q\left(\ytableaushort{{k_1}{\cdots}{k_r}}\right)=\sum_{i=1}^r2^i\binom{k-i-1}{r-i}\zeta(k)
 \]
 and
  \[
  \sum_{\substack{ k_{1}+\dots+k_{r}=k \\ k_1,\dots,k_{r-1}\ge1,k_{r}\ge2 }}
  \zeta_{(r)}^P\left(\ytableaushort{{k_1}{\cdots}{k_r}}\right)=\sum_{i=1}^r2^{i-1}\binom{k-i-1}{r-i}\zeta(k).
 \]
\end{theorem}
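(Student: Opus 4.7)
The plan is to reduce each Schur $Q$-multiple zeta value on the left-hand side to a linear combination of Euler--Zagier multiple zeta values via Example \ref{qtozeta}, then apply the classical sum formula (Theorem \ref{originalsum}) after a Vandermonde--Chu simplification. The Schur $P$ statement will follow immediately from the identity \eqref{pq}, which specialises to $\zeta_{(r)}^Q=2\zeta_{(r)}^P$ since the shape $(r)$ has a single row.

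First, by Example \ref{qtozeta},
\[
\zeta_{(r)}^Q\!\left(\ytableaushort{{k_1}{\cdots}{k_r}}\right)=\sum_{i=1}^r 2^i\sum_{\substack{r_1+\cdots+r_i=r\\r_j\ge 1}}\zeta(K_1,\ldots,K_i),
\]
where $K_j=k_{r_1+\cdots+r_{j-1}+1}+\cdots+k_{r_1+\cdots+r_j}$ is the sum of the $j$-th block of $r_j$ consecutive $k$'s. Substituting this into the left-hand side of the theorem and exchanging the order of summation, I would pass from summing over admissible $(k_1,\ldots,k_r)$ to summing over tuples $(K_1,\ldots,K_i)$ with $K_1+\cdots+K_i=k$, $K_j\ge 1$ and $K_i\ge 2$; the last inequality is automatic because $k_r\ge 2$ always lies in the final block. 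For fixed $(K_1,\ldots,K_i)$ and $(r_1,\ldots,r_i)$, the number of preimages $(k_1,\ldots,k_r)$ producing these block sums under the admissibility condition is $\binom{K_i-2}{r_i-1}\prod_{j=1}^{i-1}\binom{K_j-1}{r_j-1}$, the shift by one in the final factor accounting for $k_r\ge 2$.

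The main step is to sum these multiplicities over $(r_1,\ldots,r_i)$. The Vandermonde--Chu convolution yields
\[
\sum_{\substack{r_1+\cdots+r_i=r\\r_j\ge 1}}\binom{K_i-2}{r_i-1}\prod_{j=1}^{i-1}\binom{K_j-1}{r_j-1}=\binom{k-i-1}{r-i},
\]
which is crucially independent of $(K_1,\ldots,K_i)$. Substituting back, the left-hand side of Theorem \ref{sumpq} becomes
\[
\sum_{i=1}^r 2^i\binom{k-i-1}{r-i}\sum_{\substack{K_1+\cdots+K_i=k\\K_j\ge 1,\ K_i\ge 2}}\zeta(K_1,\ldots,K_i)=\sum_{i=1}^r 2^i\binom{k-i-1}{r-i}\zeta(k)
\]
by Theorem \ref{originalsum}. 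The main (and really only nontrivial) obstacle is spotting the Vandermonde identity so that the combinatorial multiplicity decouples from $(K_1,\ldots,K_i)$; once that is done, the classical sum formula closes the $Q$-case, and the Schur $P$ statement follows on dividing both sides by $2$.
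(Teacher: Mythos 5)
Your proposal is correct and follows essentially the same route as the paper: decompose each $\zeta^Q_{(r)}$ via Example \ref{qtozeta}, exchange the order of summation, show that each depth-$i$ index of weight $k$ is hit with multiplicity $\binom{k-i-1}{r-i}$, and invoke the classical sum formula, with the $P$-case following from \eqref{pq}. The only cosmetic difference is that you obtain the multiplicity by a Vandermonde--Chu convolution over block structures $(r_1,\ldots,r_i)$, whereas the paper counts directly the $\binom{k-i-1}{r-i}$ ways to choose the $r-i$ additional division points refining $\pmb \ell$ into an admissible $\pmb k$.
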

\begin{proof}
Let $\pmb k=\ytableaushort{{k_{1}}{\cdots}{k_{r}}}$.
By \eqref{pq}, it suffices to show the first identity. 
Example \ref{qtozeta} leads to
\begin{align*}
    \sum_{\substack{ k_{1}+\dots+k_{r}=k \\ k_1,\dots,k_{r-1}\ge1,k_{r}\ge2 }}
  \zeta_{(r)}^Q\left(\pmb k\right)&=\sum_{\substack{ k_{1}+\dots+k_{r}=k \\ k_1,\dots,k_{r-1}\ge1,k_{r}\ge2 }}\sum_{\pmb \ell\preceq_s\pmb k}2^{{\rm dep}({\pmb \ell})}\zeta(\pmb \ell)\\
  &=\sum_{\substack{ k_{1}+\dots+k_{r}=k \\ k_1,\dots,k_{r-1}\ge1,k_{r}\ge2 }}\sum_{\pmb \ell\preceq_s\pmb k}2^{{\rm dep}({\pmb \ell})}\zeta(\pmb \ell)\\
  &=\sum_{i=1}^r2^{i}\sum_{\substack{ k_{1}+\dots+k_{r}=k \\ k_1,\dots,k_{r-1}\ge1,k_{r}\ge2 }}\sum_{\substack{ \pmb \ell\preceq_s\pmb k\\ {\rm dep}(\pmb \ell)=i}}\zeta(\pmb \ell).
\end{align*}
For fixed $\pmb \ell$ with $|\pmb \ell|=k$ and ${\rm dep}(\pmb \ell)=i$, we count the number of $\pmb k$ with $ \pmb \ell\preceq_s\pmb k$ with $\pmb k\in ST((r),\mathbb Z)$. 
Since $\pmb k$ has to be admissible, it suffices to choose new $r-i$ division points of $\pmb \ell$ out of $(k-1)-(i-1)-1$ possibilities.
Therefore, 
\[\#\{\pmb k \in ST((r),\mathbb C)~|~ \pmb \ell\preceq_s\pmb k\}=\binom{k-i-1}{r-i}\]
and we have
\begin{align*}
    \sum_{\substack{ k_{1}+\dots+k_{r}=k \\ k_1,\dots,k_{r-1}\ge1,k_{r}\ge2 }}
  \zeta_{(r)}^Q\left(\pmb k\right)
  &=\sum_{i=1}^r2^{i}\binom{k-i-1}{r-i}\sum_{\substack{ |\pmb \ell|=k\\ {\rm dep}(\pmb \ell)=i}}\zeta(\pmb \ell).
  \intertext{The sum formula for multiple zeta values of Euler-Zagier type leads to}
  \sum_{\substack{ k_{1}+\dots+k_{r}=k \\ k_1,\dots,k_{r-1}\ge1,k_{r}\ge2 }}
  \zeta_{(r)}^Q\left(\pmb k\right)
  &=\sum_{i=1}^r2^i\binom{k-i-1}{r-i}\zeta(k).
\end{align*}
This proves the first identity.
Dividing both sides by $2$, we can confirm that the second identity holds. This completes the proof of the theorem.
\end{proof}
\begin{exam}For $(k,r)=(5,3)$, we have 
\[
  \sum_{\substack{ k_{1}+k_2+k_{3}=5 \\ k_1,k_2\ge1,k_{3}\ge2 }}
  \zeta_{(3)}^Q\left(\ytableaushort{{k_1}{k_2}{k_3}}\right)=11\zeta_{(1)}^Q\left(\ytableaushort{{5}}\right)=22\zeta(5).
 \]
\end{exam}
We have the following corollaries of Theorem \ref{sumpq}. The first is the sum formula in Schur $P$- or $Q$-multiple zeta values.
\begin{cor}
 For positive integers $k$ and $r$ with $k>r$, we have
 \[
  \sum_{\substack{ k_{1}+\dots+k_{r}=k \\ k_1,\dots,k_{r-1}\ge1,k_{r}\ge2 }}
  \zeta_{(r)}^Q\left(\ytableaushort{{k_1}{\cdots}{k_r}}\right)=\sum_{i=1}^r2^{i-1}\binom{k-i-1}{r-i}\zeta_{(1)}^Q\left(\ytableaushort{{k}}\right)
 \]
 and
  \[
  \sum_{\substack{ k_{1}+\dots+k_{r}=k \\ k_1,\dots,k_{r-1}\ge1,k_{r}\ge2 }}
  \zeta_{(r)}^P\left(\ytableaushort{{k_1}{\cdots}{k_r}}\right)=\sum_{i=1}^r2^{i-1}\binom{k-i-1}{r-i}\zeta_{(1)}^P\left(\ytableaushort{{k}}\right).
 \]
\end{cor}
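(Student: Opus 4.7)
The plan is to deduce the corollary directly from Theorem \ref{sumpq} by identifying $\zeta(k)$ with a scalar multiple of the depth--one Schur $P$-- or $Q$--multiple zeta value.

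First I would evaluate $\zeta^Q_{(1)}\!\bigl(\ytableaushort{k}\bigr)$ and $\zeta^P_{(1)}\!\bigl(\ytableaushort{k}\bigr)$ directly from the definitions. For $\zeta^Q_{(1)}\!\bigl(\ytableaushort{k}\bigr)$ the single box may be filled by any element of $\mathbb{N}'=\{1',1,2',2,\ldots\}$, and since $|i|=|i'|=i$, each value of $i$ contributes $2/i^{k}$, giving $\zeta^Q_{(1)}\!\bigl(\ytableaushort{k}\bigr)=2\zeta(k)$. (Equivalently, this is the $r=1$ case of Example \ref{qtozeta}, where the set of admissible $\pmb\ell$ collapses to the single index $(k)$.) For the Schur $P$--version, condition \textbf{PST4} forbids primed entries on the main diagonal, so only unprimed $i$ may be placed, yielding $\zeta^P_{(1)}\!\bigl(\ytableaushort{k}\bigr)=\zeta(k)$; this is also consistent with \eqref{pq} specialized to $r=1$.

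Next I would substitute these identifications into Theorem \ref{sumpq}. For the $Q$--identity, replacing $\zeta(k)$ by $\tfrac12\zeta^Q_{(1)}\!\bigl(\ytableaushort{k}\bigr)$ on the right-hand side of the first formula of Theorem \ref{sumpq} turns the prefactor $2^{i}$ into $2^{i-1}$, which is exactly the claimed expression. For the $P$--identity, I would replace $\zeta(k)$ by $\zeta^P_{(1)}\!\bigl(\ytableaushort{k}\bigr)$ in the second formula of Theorem \ref{sumpq}, leaving the coefficients $2^{i-1}\binom{k-i-1}{r-i}$ unchanged.

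There is essentially no obstacle: the corollary is a cosmetic reformulation of Theorem \ref{sumpq} once the depth--one values $\zeta^Q_{(1)}$ and $\zeta^P_{(1)}$ have been computed. The only point requiring a moment's care is the parity--of--prime convention in the two cases (\textbf{PST1}--\textbf{PST4} versus the $Q$--version without \textbf{PST4}), which accounts for the factor of $2$ distinguishing $\zeta^Q_{(1)}\!\bigl(\ytableaushort{k}\bigr)$ from $\zeta^P_{(1)}\!\bigl(\ytableaushort{k}\bigr)$ and ensures that the right-hand sides for the two corollaries share the common coefficient $2^{i-1}\binom{k-i-1}{r-i}$.
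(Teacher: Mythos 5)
Your proposal is correct and is exactly the intended derivation: the paper states this as an immediate corollary of Theorem \ref{sumpq}, obtained by substituting $\zeta_{(1)}^Q\bigl(\ytableaushort{{k}}\bigr)=2\zeta(k)$ and $\zeta_{(1)}^P\bigl(\ytableaushort{{k}}\bigr)=\zeta(k)$ into the two formulas there. Your evaluations of the depth-one values (via PST4 and the relation \eqref{pq}) and the resulting shift from $2^{i}$ to $2^{i-1}$ in the $Q$-case are all accurate.
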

The next corollary is the duality formula for a certain shape and weight. Before we explain a duality property of the Schur $Q$-multiple zeta function, we review the original duality formula for multiple zeta functions.
We denote a string $\underbrace{1,\ldots,1}_r$ of $1$'s by $\{1\}^r$.
Then for an admissible index
\[
{\boldsymbol k}=(\{1\}^{a_1-1}, b_1+1, \{1\}^{a_2-1}, b_2+1, \ldots, \{1\}^{a_m-1}, b_m+1)    
\]
with positive integers $a_1, b_1, a_2, b_2, \cdots, a_m, b_m\in {\mathbb Z}_{\geq 1}$, the following index is called {\it dual} index of ${\boldsymbol k}$:
\[{\boldsymbol k^{\dagger}}=(\{1\}^{b_m-1}, a_m+1, \{1\}^{b_{m-1}-1}, a_{m_1}+1, \ldots, \{1\}^{b_1-1}, a_1+1).
\]
The duality formula is the following.
\begin{theorem}[Duality formula \cite{z}]
\label{dualityformulaformzv}
 For any admissible index ${\boldsymbol k}=(k_1, \ldots, k_r)$ and its dual index ${\boldsymbol k^{\dagger}}=(k^{\dagger}_1., \ldots, k^{\dagger}_s)$, we have
\[
\zeta(k_1, \ldots, k_r)=\zeta(k^{\dagger}_1, \ldots, k^{\dagger}_s).
\]
\end{theorem}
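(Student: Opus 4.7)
The plan is to prove the duality formula via Kontsevich's iterated-integral representation of multiple zeta values together with the involution $t\mapsto 1-t$ on the unit interval. First I would introduce the two one-forms $\omega_0(t)=\frac{dt}{t}$ and $\omega_1(t)=\frac{dt}{1-t}$ on $(0,1)$, and recall that to any admissible index $\boldsymbol{k}=(k_1,\ldots,k_r)$ of weight $k=k_1+\cdots+k_r$ one associates a binary word $w(\boldsymbol{k})=\epsilon_1\epsilon_2\cdots\epsilon_k\in\{0,1\}^k$, with $\epsilon_1=0$ and $\epsilon_k=1$, by concatenating blocks of the shape $0^{k_i-1}1$ in the appropriate order. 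One then has the representation
\[
\zeta(\boldsymbol{k})=\int_{\Delta_k}\omega_{\epsilon_1}(t_1)\,\omega_{\epsilon_2}(t_2)\cdots\omega_{\epsilon_k}(t_k),
\]
where $\Delta_k=\{0<t_1<t_2<\cdots<t_k<1\}$ is the ordered open simplex. The admissibility condition $k_r\ge 2$ is exactly what guarantees $\epsilon_1=0$ and $\epsilon_k=1$, ensuring absolute convergence of the integral.

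Next I would translate the definition of the dual index into a statement about binary words. Starting from $\boldsymbol{k}=(\{1\}^{a_1-1},b_1+1,\ldots,\{1\}^{a_m-1},b_m+1)$, a direct unwinding of the block encoding yields a word of the form $w(\boldsymbol{k})=0^{b_m}1^{a_m}0^{b_{m-1}}1^{a_{m-1}}\cdots 0^{b_1}1^{a_1}$ (up to convention for reading order). The same unwinding applied to $\boldsymbol{k}^{\dagger}=(\{1\}^{b_m-1},a_m+1,\ldots,\{1\}^{b_1-1},a_1+1)$ produces precisely the word obtained from $w(\boldsymbol{k})$ by reversing it and flipping every bit (swapping $0\leftrightarrow 1$). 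Thus the duality formula reduces to the claim that the iterated integral is invariant under this reverse-and-flip operation on the binary word.

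Finally, I would establish the analytic identity
\[
\int_{\Delta_k}\omega_{\epsilon_1}(t_1)\cdots\omega_{\epsilon_k}(t_k)=\int_{\Delta_k}\omega_{1-\epsilon_k}(t_1)\cdots\omega_{1-\epsilon_1}(t_k)
\]
by the change of variables $t_i\mapsto 1-t_{k+1-i}$. This map is an orientation-preserving diffeomorphism of $\Delta_k$ that pulls back $\omega_0$ to $\omega_1$ and vice versa, and simultaneously reverses the order of the factors in the iterated integral; matching this back with the binary-word computation above completes the proof.

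The main obstacle is not analytic but bookkeeping: one must align three parallel conventions — the order in which the blocks $0^{k_i-1}1$ are concatenated to form $w(\boldsymbol{k})$, the order in which the iterated integrand is read relative to the simplex $\Delta_k$, and the order in which the dual blocks are listed in $\boldsymbol{k}^{\dagger}$. Any mismatch leads to a word that is reversed but not flipped (or vice versa), so the crux of the argument is carefully lining up these three orderings so that the single change of variables $t_i\mapsto 1-t_{k+1-i}$ implements exactly the combinatorial duality that takes $\boldsymbol{k}$ to $\boldsymbol{k}^{\dagger}$.
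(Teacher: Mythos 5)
Your proposal is correct and is the standard proof of MZV duality via Kontsevich's iterated-integral representation together with the substitution $t\mapsto 1-t$; the paper does not prove this theorem itself but simply cites it from Zagier's article, where essentially this argument appears, so there is nothing to compare beyond noting the match. One minor caveat: the map $t_i\mapsto 1-t_{k+1-i}$ is not always orientation-preserving (its Jacobian determinant is $(-1)^{k(k+1)/2}$), but since the integrand is positive and the Jacobian has absolute value one, the change-of-variables identity holds regardless.
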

As a special case of Theorem \ref{dualityformulaformzv}, it holds that
\[
\zeta(\{1\}^{k-2},2)=\zeta(k).
\]
Taking $\lambda=(k-1)$ and $\pmb k=\ytableaushort{{1}{\cdots}{1}{2}}\in ST(\lambda,\mathbb C)$, we have the following formula similar to the above identity.
\begin{cor}
\label{dual}
 For positive integers $k$, we have
 \[
  \zeta_{(k-1)}^Q\left(\ytableaushort{{1}{\cdots}{1}{2}}\right)=(2^{k-1}-1)\zeta_{(1)}^Q\left(\ytableaushort{{k}}\right)=(2^{k}-2)\zeta(k)
 \]
 and
  \[
  \zeta_{(k-1)}^P\left(\ytableaushort{{1}{\cdots}{1}{2}}\right)=(2^{k-1}-1)\zeta_{(1)}^P\left(\ytableaushort{{k}}\right)=(2^{k-1}-1)\zeta(k).
 \]
\end{cor}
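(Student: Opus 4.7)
The plan is to derive this corollary as an immediate specialization of Theorem~\ref{sumpq} at $r=k-1$. First I would observe that when the length and the weight are related by $r=k-1$, the constraint $k_1+\cdots+k_{k-1}=k$ with $k_i\ge 1$ and $k_{k-1}\ge 2$ admits only a single solution, namely $k_1=\cdots=k_{k-2}=1$ and $k_{k-1}=2$. Consequently, the sum over admissible indices on the left-hand side of Theorem~\ref{sumpq} collapses to the single term
\[
 \zeta_{(k-1)}^Q\!\left(\ytableaushort{{1}{\cdots}{1}{2}}\right).
\]

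Next I would simplify the right-hand side. With $r=k-1$ we get $r-i=k-1-i$ and the top of the binomial is $k-i-1=k-1-i$, so $\binom{k-i-1}{k-1-i}=1$ for every $1\le i\le k-1$. Therefore the right-hand side becomes
\[
 \sum_{i=1}^{k-1} 2^i\,\zeta(k)=(2^{k}-2)\,\zeta(k),
\]
using the finite geometric sum. Combining this with $\zeta_{(1)}^Q(\ytableaushort{{k}})=2\zeta(k)$ (immediate from Example~\ref{qtozeta} with $r=1$) yields $(2^k-2)\zeta(k)=(2^{k-1}-1)\zeta_{(1)}^Q(\ytableaushort{{k}})$, which is the first claim.

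For the $P$-version I would simply invoke the relation \eqref{pq}: since both shapes $(k-1)$ and $(1)$ consist of a single row (so the factor is $2^1=2$), dividing both sides of the $Q$-identity by $2$ gives
\[
 \zeta_{(k-1)}^P\!\left(\ytableaushort{{1}{\cdots}{1}{2}}\right)=(2^{k-1}-1)\zeta(k)=(2^{k-1}-1)\zeta_{(1)}^P(\ytableaushort{{k}}),
\]
as required. There is no real obstacle here; the only thing to be careful about is the binomial coefficient simplification (the equality $k-i-1=k-1-i$ making every binomial equal to $1$) and the check that the admissibility conditions force a unique index when $r=k-1$.
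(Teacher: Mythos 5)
Your proposal is correct and is exactly the derivation the paper intends: specialize Theorem~\ref{sumpq} to $r=k-1$, note the left-hand sum collapses to the single admissible index $(1,\ldots,1,2)$, observe every binomial $\binom{k-i-1}{k-1-i}$ equals $1$ so the right side is $\sum_{i=1}^{k-1}2^i=2^k-2$, and pass to the $P$-version via \eqref{pq}. No gaps.
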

\section{Symplectic Schur Multiple zeta functions}
\label{symplecticsection}
First of all, we review the basic terminology to define symplectic or orthogonal Schur multiple zeta functions.
We associate partition $\lambda$ with {\it the Young diagram}
\[D(\lambda)=\{(i, j)\in {\mathbb Z}^2 ~|~ 1\leq i\leq r, i\leq j\leq \lambda_i\}\] depicted as a collection of square boxes with the $i$-th row has $\lambda_i$ boxes.
For a partition $\lambda$, a Young tableau $(t_{ij})$ of shape $\lambda$ over a set $X$ is a filling of $D(\lambda)$ with $t_{ij}\in X$ into $(i,j)$ box of $D(\lambda)$.
We denote by $T(\lambda,X)$ the set of all Young tableaux of shape $\lambda$ over $X$.

Let $[\overline{N}]$ be the set $\{1,\overline{1},2,\overline{2},\ldots,N,\overline{N}\}$ with the total ordering $1<\overline{1}<2<\overline{2}<\cdots<N<\overline{N}$. Then, a {\it symplectic tableau} $\pmb t=(t_{ij})\in T(\lambda,[\overline{N}])$ is obtained by numbering all the boxes of $D(\lambda)$ with letters from $[\overline{N}]$ such that
\begin{description}
    \item[SP1] the entries of $\pmb t$ are weakly increasing along each row of $\pmb t$,
    \item[SP2] the entries of $\pmb t$ are strictly increasing down each column of $\pmb t$,
    \item[SP3] the boxes of content $-i$ contain entries which are greater than or equal to $i+1$.
\end{description}
We refer to the third condition SP3 as the symplectic condition.
We denote by $SP_N(\lambda)$ the set of symplectic tableaux of shape $\lambda$.
Then for a given set ${ \pmb s}=(s_{ij})\in T(\lambda,\mathbb{C})$ of variables, 
{\it symplectic Schur multiple zeta functions} of {shape} $\lambda$ are defined as
\begin{equation}
\label{symplecticdef}
\zeta_\lambda^{{\rm sp},N}({ \pmb s})=\sum_{M\in SP_N(\lambda)}\frac{1}{M^{ \pmb s}},
\end{equation}
where $M^{ \pmb s}=\displaystyle{\prod_{(i, j)\in D(\lambda)}|m_{ij}|^{s_{ij}}}$ for $M=(m_{ij})\in SP_N(\lambda)$ and $|i|=i,|\overline{i}|=i^{-1}$.

Hamel composed a directed graph $D$ corresponding to the symplectic Schur functions \cite{ha} and applied the Stembridge Theorem (\cite{st}) to obtain the following determinant expression of the symplectic Schur functions.
\begin{theorem}[{\cite[Theorem 3.1]{ha}}]
Let $\lambda/\mu$ be a partition of skew type. Then, for any outside decomposition $(\theta_1,\ldots,\theta_r)$ of $\lambda/\mu$,
\[{\rm sp}_{\lambda/\mu}={\rm det}({\rm sp}_{\theta_i\#\theta_j})_{1\le i,j\le r}.\]
\end{theorem}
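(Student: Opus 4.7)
The plan is to emulate Hamel's original argument, which combines a lattice-path model for symplectic tableaux with a Lindstr\"om-Gessel-Viennot type theorem adapted to outside decompositions. The essential ingredients are the directed graph $D_{\rm sp}$ encoding symplectic column-fillings, a correspondence between non-intersecting tuples of paths and symplectic tableaux, and the determinant version of Theorem \ref{lem31} (which we would invoke in place of its pfaffian analogue).

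First, I would construct a directed acyclic graph $D_{\rm sp}$ whose vertex set is a two-dimensional grid augmented with the ordered alphabet $[\overline{N}]$, and whose edges are arranged so that a directed path between prescribed endpoints reads out a legal column of a symplectic tableau: weakly increasing across rows, strictly increasing down columns, and obeying the symplectic condition \textbf{SP3} via the omission of vertices whose label is too small for their content. Weighting each horizontal edge with label $k$ by $x_k$ and each labelled edge with $\overline{k}$ by $x_k^{-1}$, the weight of a path records the monomial contribution of the corresponding column. Then I would check that the $r$-tuples of non-intersecting paths $\mathscr P_0(\pmb u,\pmb v)$ with $u_i,v_i$ defined below are in weight-preserving bijection with $SP_N(\lambda/\mu)$, so that $\mathrm{sp}_{\lambda/\mu}$ coincides with the weighted sum over non-intersecting tuples.

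Second, for each strip $\theta_i$ in the outside decomposition, I would designate a starting vertex $u_i$ determined by the leftmost/bottommost box of $\theta_i$ and its content, and an ending vertex $v_i$ determined by the topmost/rightmost box. The key technical step is to identify the single-path generating function
\[
\sum_{P \in \mathscr P(u_i,v_j)} w(P) = \mathrm{sp}_{\theta_i\#\theta_j},
\]
matching the three cases in the definition of $\theta_i\#\theta_j$ from Section \ref{outsidesec}: when the strips share contents (overlapping diagonals), the paths from $u_i$ to $v_j$ sweep through the superposed region; when they are disjoint and connected end-to-start, the only path has weight $1$ (consistent with $\mathrm{sp}_\emptyset = 1$); when the relative placement is geometrically impossible, there are no such paths, giving $0$. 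I would then verify the $D_{\rm sp}$-compatibility condition: if $i<k$ and $j>\ell$, every $P\in\mathscr P(u_i,v_\ell)$ intersects every $Q\in\mathscr P(u_k,v_j)$, which is forced by the geometric separation of the strip endpoints along the perimeter. Applying the Lindstr\"om-Gessel-Viennot theorem then yields
\[
\mathrm{sp}_{\lambda/\mu}
= \sum_{(P_1,\ldots,P_r)\in \mathscr P_0(\pmb u,\pmb v)} w(P_1,\ldots,P_r)
= \det\bigl(\mathrm{sp}_{\theta_i\#\theta_j}\bigr)_{1\le i,j\le r}.
\]

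The main obstacle will be Step (2)'s $D_{\rm sp}$-compatibility in the presence of the symplectic condition \textbf{SP3}. The removal of certain vertices to enforce \textbf{SP3} could a priori create geometric shortcuts that allow two ``crossed'' paths to avoid one another by slipping through the forbidden region; ruling this out requires a careful geometric argument showing that the removed vertices lie in a subregion of $D_{\rm sp}$ that does not disrupt the standard lattice-path crossing argument. A secondary obstacle is the case analysis for the single-path sums, particularly correctly identifying the ``undefined'' case of $\theta_i\#\theta_j$ with the absence of directed paths between the corresponding endpoints. Once these two geometric verifications are in place, the remainder of the argument is a direct application of Lindstr\"om-Gessel-Viennot, precisely as in Hamel's treatment.
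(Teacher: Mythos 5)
Your proposal correctly reconstructs Hamel's argument, and it is essentially the same route the paper takes: the paper imports this statement from \cite{ha} without proof, but the lattice-path construction it describes immediately afterwards --- one path per strip $\theta_i$ with endpoints $B_i,E_i$ read off the starting and ending boxes, a weight-preserving bijection between non-intersecting tuples and $SP_N(\lambda/\mu)$ respecting the symplectic left boundary, single-path generating functions equal to ${\rm sp}_{\theta_i\#\theta_j}$ (with the empty and undefined cases giving $1$ and $0$), compatibility, and the determinant form of the Lindstr\"om--Gessel--Viennot/Stembridge theorem --- is exactly your outline. The only slip is the phrase ``reads out a legal column'' in your first step: each path reads out a \emph{strip} of the chosen outside decomposition (the graph itself depends on which contents are approached from the left versus from below), as your second step correctly has it.
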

Following the Hamel way, we compose a directed graph $D$ corresponding to the symplectic Schur multiple zeta functions.
For a fixed positive integer $N$, we begin with the $y$-axis labeled by $1,\overline{1},2,\overline{2},\ldots, N, \overline{N}$ and direct an edge $u\rightarrow v$ whenever $u-v = (0, 1), (0,-1),(1, 0)$, or $(1, -1)$. We add four restrictions: 
 a down-vertical step must not precede an up-vertical step, an up-vertical step must not precede a down-vertical step, a down-vertical step must not precede a horizontal step, and an up-vertical step must not precede a diagonal step. Because of the symplectic condition, we add a left boundary in the form
of a ``backwards lattice path'' from $(0,1)$ to $(0,\overline{1})$ to $(0,2)$ to $(-1,2)$ to $(-1,\overline{2})$ to $(-1,3)$ to $(-2,3)$ to $(-2,\overline{3})$ to $(-2,4)$ to $(-3,4)\ldots$. We indicate this left boundary by the dotted line in Figure \ref{symplecticleft}.

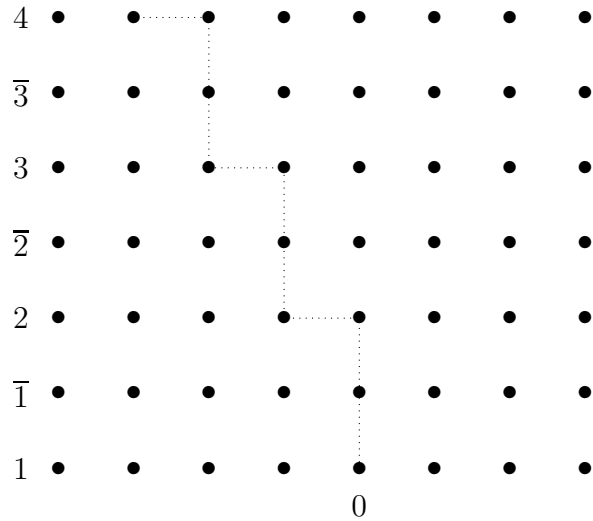
\begin{figure}[ht]
\begin{center}
 \begin{tikzpicture} 
    \node at (0,1) {$\bullet$};
   \node at (0,2) {$\bullet$};
   \node at (0,3) {$\bullet$};
   \node at (0,4) {$\bullet$}; 
   \node at (0,5) {$\bullet$};
   \node at (0,6) {$\bullet$}; 
   \node at (1,1) {$\bullet$};
   \node at (1,2) {$\bullet$};
   \node at (1,3) {$\bullet$};
   \node at (1,4) {$\bullet$};    
   \node at (2,1) {$\bullet$};
   \node at (2,2) {$\bullet$};
   \node at (2,3) {$\bullet$};  
   \node at (2,4) {$\bullet$};  
   \node at (3,1) {$\bullet$};
   \node at (3,2) {$\bullet$};
   \node at (3,3) {$\bullet$};  
  \node at (3,4) {$\bullet$};  
   \node at (4,1) {$\bullet$};
   \node at (4,2) {$\bullet$};
   \node at (4,3) {$\bullet$};  
   \node at (4,4) {$\bullet$};  
   \node at (5,1) {$\bullet$};
   \node at (5,2) {$\bullet$};
   \node at (5,3) {$\bullet$};  
   \node at (5,4) {$\bullet$};  
   \node at (6,1) {$\bullet$};
   \node at (6,2) {$\bullet$};
   \node at (6,3) {$\bullet$};  
   \node at (6,4) {$\bullet$};  
    \node at (7,1) {$\bullet$};
   \node at (7,2) {$\bullet$};
   \node at (7,3) {$\bullet$};  
   \node at (7,4) {$\bullet$}; 
   \node at (1,5) {$\bullet$};
   \node at (2,5) {$\bullet$};  
   \node at (3,5) {$\bullet$}; 
   \node at (4,5) {$\bullet$};
   \node at (5,5) {$\bullet$};
   \node at (6,5) {$\bullet$};  
   \node at (7,5) {$\bullet$}; 
   \node at (1,6) {$\bullet$};
   \node at (2,6) {$\bullet$};  
   \node at (3,6) {$\bullet$}; 
   \node at (4,6) {$\bullet$};
   \node at (5,6) {$\bullet$};
   \node at (6,6) {$\bullet$};  
   \node at (7,6) {$\bullet$};
    \node at (0,7) {$\bullet$};
   \node at (1,7) {$\bullet$};
   \node at (2,7) {$\bullet$};  
   \node at (3,7) {$\bullet$}; 
   \node at (4,7) {$\bullet$};
   \node at (5,7) {$\bullet$};
   \node at (6,7) {$\bullet$};  
   \node at (7,7) {$\bullet$};
    \node at (-0.5,7) {$4$};
    \node at (-0.5,6) {$\overline{3}$};   
    \node at (-0.5,5) {$3$};
    \node at (-0.5,4) {$\overline{2}$};
    \node at (-0.5,3) {$2$};
    \node at (-0.5,2) {$\overline{1}$};
     \node at (-0.5,1) {$1$};
     \node at (4,0.5) {$0$};
  \draw[dotted] (4,1) -- (4,3) -- (3,3) --(3,5)-- (2,5)--(2,7)--(1,7);
 \end{tikzpicture}
\end{center}
\caption{Left boundary given by the symplectic condition}
 \label{symplecticleft}
\end{figure}
In the following, we may omit this left boundary for simplicity. 

For a fixed outside decomposition $(\theta_1,\ldots,\theta_r)$ of $\lambda/\mu$, we will construct a non-intersecting $r$-tuple of lattice paths that corresponds to a symplectic tableau of shape $\lambda/\mu$ with the outside decomposition $(\theta_1,\ldots,\theta_r)$, such that the $i$-th path corresponds to the $i$-th strip and begins at $B_i$ and ends at $E_i$ as described now. Fix points $B_i = (t-s,-(t-s)+1)$ if the $i$-th strip has the starting box on left perimeter in a box $(s,t)$ of the diagram and if $t-s\le0$ (i.e. $B_i$ is on the left boundary), or $B_i = (t-s,1)$ if the $i$-th strip has the starting box on the left perimeter in a box $(s,t)$ of the diagram and if $t-s>0$, or $B_i = (t-s,\infty)$ if the $i$-th strip has the starting box on the bottom perimeter in a box $(s,t)$ of the diagram ($B_i = (t-s,\infty)$ if both). Fix points $E_i = (v-u+1,1)$ if the $i$-th strip has the ending box on the top perimeter in box $(u,v)$ of the diagram, or $E_i = (v-u+1,\infty)$ if the $i$-th strip has the ending box on the right perimeter in a box $(u,v)$ of the diagram ($E_i = (v-u+1,\infty)$ if both).

For the $j$-th strip construct a path starting at $B_j$ (called the starting point) and ending at $E_j$ (called the ending point) as follows: if a box containing $i$ (resp. $\overline{i}$) and at coordinates $(a,b)$ in the diagram is approached from the left in the strip, put a horizontal step from
$(b-a, i)$ to $(b-a+1, i)$ (resp. $(b-a, \overline{i})$ to $(b-a+1, \overline{i})$); if a box containing $i$ (resp. $\overline{i}$) and at coordinates $(a,b)$ in the diagram is approached from below in the strip, put a diagonal step from $(b-a, \overline{i})$ to $(b-a+1, i)$ (resp. $(b-a, i+1)$ to $(b-a+1, \overline{i})$). 
We note that the physical locations of the termination points of the steps are independent of the outside decomposition and depend only on the contents of the boxes. See Figure \ref{symplecticfigure} in which first the ending points of steps are shown alone and then complete paths for two different outside decompositions are shown. We note that no two paths can have the same starting and/or ending points, since that would imply two boxes of the same content on the same section of the perimeter. Connect these non-vertical steps with vertical steps. It is a routine to verify that there is a unique.
Under the above settings, Hamel showed that the symplectic tableaux of shape $\lambda/\mu$ can be identified with the non-intersecting paths in $\mathscr P_{0}((B_i);(E_i))$, and $(B_i)$ is $D$-compatible with $(E_i)$. 

We next define the weight of each step.
Let $v_i(P)=(v_{ij}(P))_{j=0}$ be the sequence of vertices representing the path $P\in \mathscr P_{0}(B_i;E_i)$. Successively, let $v_i^w(P)=(v_{ij}^w(P))_{j=1}$ be the sub-sequence of $v_i(P)$ with $v_{ij}(P)-v_{i(j-1)}(P)= (1,0)$ or $(1,-1)$, let $v_i^1(P)=(v_{ij}^1(P))_{j=1}$ be the sub-sequence of $v_i(P)$ with $v_{ij}(P)-v_{i(j-1)}(P)= (0,1)$ or $(0,-1)$.
For $\pmb s\in T(\lambda,\mathbb C)$, we assign the weight $w(v_{ij}^w(P))=|v_2|^{-s_{j}(\theta_i)}$ to $v_{ij}^w(P)=(v_1,v_2)$ with the $j$-th component $s_j(\theta_i)$ of $\theta_i$ and we assign the weight $w(v_{ij}^1(P))=1$ to $v_{ij}^1(P)$. 
Then, we define \[w(P)=\prod_{v_{ij}(P)}w(v_{ij}(P)),\]
and for $r$-tuple of non-intersecting paths of $(P_1,\ldots,P_r)$ with $P_i\in \mathscr P(B_i;E_i)$,
\[w(P_1,\ldots,P_r)=\prod_{i=1}^rw(P_i).\]
Then, due to the Hamel composition, we find that
\[\zeta_\lambda^{{\rm sp},N}(\pmb s)=\sum_{P_i\in \mathscr P(B_i;E_i)}w(P_1,\ldots,P_r).\]
\begin{exam}
\label{72}
For $\lambda=(5,3,3,1)$ let $(s_{ij})\in T(\lambda,\mathbb C)$. The weight $w(P_i)$ are
\begin{align*}
    w(P_1)&=1^{s_{11}},&&w(P_2)=\frac{2^{s_{21}}}{3^{s_{22}}2^{s_{12}}},\\
    w(P_3)&=\frac{3^{s_{32}}3^{s_{33}}}{4^{s_{41}}3^{s_{31}}3^{s_{23}}2^{s_{13}}},&&w(P_4)=\frac{3^{s_{24}}3^{s_{14}}}{4^{s_{15}}},
\end{align*} 
and the corresponding symplectic tableau is
\[\ytableaushort{{\overline{1}}22{\overline{2}}{4},{\overline{2}}33,3{\overline{3}}{\overline{3}},{4}}.\]
\begin{figure}[h]
\begin{center}
 \begin{tikzpicture} 
    \node at (8,1) {$\bullet$};
   \node at (8,2) {$\bullet$};
   \node at (8,3) {$\bullet$};
   \node at (8,4) {$\bullet$}; 
   \node at (8,5) {$\bullet$};
   \node at (8,6) {$\bullet$}; 
   \node at (1,1) {$\bullet$};
   \node at (1,2) {$\bullet$};
   \node at (1,3) {$\bullet$};
   \node at (1,4) {$\bullet$};    
   \node at (2,1) {$\bullet$};
   \node at (2,2) {$\bullet$};
   \node at (2,3) {$\bullet$};  
   \node at (2,4) {$\bullet$};  
   \node at (3,1) {$\bullet$};
   \node at (3,2) {$\bullet$};
   \node at (3,3) {$\bullet$};  
  \node at (3,4) {$\bullet$};  
   \node at (4,1) {$\bullet$};
   \node at (4,2) {$\bullet$};
   \node at (4,3) {$\bullet$};  
   \node at (4,4) {$\bullet$};  
   \node at (5,1) {$\bullet$};
   \node at (5,2) {$\bullet$};
   \node at (5,3) {$\bullet$};  
   \node at (5,4) {$\bullet$};  
   \node at (6,1) {$\bullet$};
   \node at (6,2) {$\bullet$};
   \node at (6,3) {$\bullet$};  
   \node at (6,4) {$\bullet$}; 
   \node at (8,7) {$\bullet$};
    \node at (7,1) {$\bullet$};
   \node at (7,2) {$\bullet$};
   \node at (7,3) {$\bullet$};  
   \node at (7,4) {$\bullet$}; 
   \node at (1,5) {$\bullet$};
   \node at (2,5) {$\bullet$};  
   \node at (3,5) {$\bullet$}; 
   \node at (4,5) {$\bullet$};
   \node at (5,5) {$\bullet$};
   \node at (6,5) {$\bullet$};  
   \node at (7,5) {$\bullet$}; 
   \node at (1,6) {$\bullet$};
   \node at (2,6) {$\bullet$};  
   \node at (3,6) {$\bullet$}; 
   \node at (4,6) {$\bullet$};
   \node at (5,6) {$\bullet$};
   \node at (6,6) {$\bullet$};  
   \node at (7,6) {$\bullet$};
   \node at (1,7) {$\bullet$};
   \node at (2,7) {$\bullet$};  
   \node at (3,7) {$\bullet$}; 
   \node at (4,7) {$\bullet$};
   \node at (5,7) {$\bullet$};
   \node at (6,7) {$\bullet$};  
   \node at (7,7) {$\bullet$};
    \node at (0.5,7) {$4$};
    \node at (0.5,6) {$\overline{3}$};   
    \node at (0.5,5) {$3$};
    \node at (0.5,4) {$\overline{2}$};
    \node at (0.5,3) {$2$};
    \node at (0.5,2) {$\overline{1}$};
     \node at (0.5,1) {$1$};
     \node at (8,0.5) {$4$};
     \node at (4,0.5) {$0$};
     \node at (3,0.5) {$-1$};
     \node at (2,0.5) {$-2$};
     \node at (1,0.5) {$-3$};
         \node at (3.5,1) {$B_1$};
    \node at (2.5,3) {$B_2$};   
    \node at (2,7.5) {$B_3$};
    \node at (6,7.5) {$B_4$};
    \node at (5,0.5) {$E_1$};
    \node at (6,0.5) {$E_2$};
     \node at (7,0.5) {$E_3$};
     \node at (8.5,7.5) {$E_4$};
 \draw[dotted] (4,1) -- (4,3) -- (3,3) --(3,5)-- (2,5)--(2,7)--(1,7);
 \draw (4,1)--(4,2) -- (5,2) -- (5,1);
\draw[loosely dashdotted] (3,3) -- (3,4) -- (4,4) -- (4,5) -- (5,5)--(5,4)--(6,3)--(6,1); 
\draw[dashdotted] (1.5,7.5) -- (2,7) -- (2,6) -- (3,5)--(3,6)--(5,6)--(6,5)--(6,4)--(7,3)--(7,1);
  \draw[dashed] (5.5,7.5) --(6,7)-- (6,6) -- (7,5)--(7,7)--(8,7) --(8,7.5);
 \end{tikzpicture}
\end{center}
\caption{$(P_1,P_2,P_3,P_4)$ in Example \ref{72}}
\label{symplecticfigure}
\end{figure}
\end{exam}

\begin{theorem}
Let $\lambda=(\lambda_1,\ldots,\lambda_{r})$, $\mu=(\mu_1,\ldots,\mu_{s})$ be partitions. 
Then for $\pmb s\in T^{\rm diag}(\lambda/\mu,\mathbb C)$ and any outside decomposition $(\theta_1,\ldots,\theta_r)$ of $\lambda/\mu$,
\[\zeta_{\lambda/\mu}^{{\rm sp},N}(\pmb s)={\rm det}(\zeta_{\theta_i\#\theta_j}^{{\rm sp},N}(\pmb s_{(\lambda_i,\lambda_j)}))_{1\le i,j\le r}
,\]
where $\pmb s_{(\lambda_i,\lambda_j)}=\pmb s_{\lambda_i}\#\pmb s_{\lambda_j}$.
\end{theorem}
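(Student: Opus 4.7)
The plan is to combine the Hamel lattice-path encoding developed in the preceding paragraphs with the standard Lindstr\"om--Gessel--Viennot sign-reversing argument, exactly as in the classical proofs in Hamel \cite{ha} and Hamel--Goulden \cite{hg}. The excerpt has already established that a symplectic tableau $M\in SP_N(\lambda/\mu)$ corresponds bijectively to a non-intersecting $r$-tuple $(P_1,\ldots,P_r)$ of directed paths with $P_i\in\mathscr P(B_i,E_i)$, that $(B_1,\ldots,B_r)$ is $D$-compatible with $(E_1,\ldots,E_r)$, and that the weight assignment on steps was defined so that $w(P_1,\ldots,P_r)=1/M^{\pmb s}$. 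Consequently,
\[
\zeta^{{\rm sp},N}_{\lambda/\mu}(\pmb s)=\sum_{\substack{(P_1,\ldots,P_r)\\ \text{non-intersecting},\ P_i\in\mathscr P(B_i,E_i)}} w(P_1,\ldots,P_r).
\]

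The first main step is to identify, for each pair $(i,j)$, the single-path generating function $Q(B_i,E_j):=\sum_{P\in\mathscr P(B_i,E_j)}w(P)$ with $\zeta^{{\rm sp},N}_{\theta_i\#\theta_j}(\pmb s_{(\lambda_i,\lambda_j)})$. Since both $B_i$ and $E_j$ depend only on the contents of the starting/ending boxes of $\theta_i$ and $\theta_j$, a directed path from $B_i$ to $E_j$ is precisely a Hamel-type encoding of a symplectic filling of the strip $\theta_i\#\theta_j$. The assumption $\pmb s\in T^{\mathrm{diag}}(\lambda/\mu,\mathbb C)$ is used in an essential way here: entries of $\pmb s$ are constant along diagonals, so when $\theta_i\#\theta_j$ is formed by sliding $\theta_i$ diagonally, the exponents $\pmb s_{(\lambda_i,\lambda_j)}=\pmb s_{\lambda_i}\#\pmb s_{\lambda_j}$ inherited from $\pmb s$ are well defined and agree with the weights read off the corresponding lattice path. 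The boundary cases follow Hamel's conventions: if $\theta_i\#\theta_j=\emptyset$ then $\mathscr P(B_i,E_j)$ consists of the empty path and $Q(B_i,E_j)=1$; if $\theta_i\#\theta_j$ is undefined then no directed path from $B_i$ to $E_j$ exists and $Q(B_i,E_j)=0$.

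With this identification in hand, the theorem follows from Stembridge's extension of the Lindstr\"om--Gessel--Viennot lemma (the ``determinant'' version cited in the excerpt as \cite[Theorem 3.2]{st}), applied to the sequences $(B_i)$ and $(E_j)$ in the directed graph $D$. Because $(B_i)$ is $D$-compatible with $(E_j)$, the signed sum over all permutations $\sigma\in\mathfrak S_r$ and all $r$-tuples $(P_1,\ldots,P_r)$ with $P_i\in\mathscr P(B_i,E_{\sigma(i)})$ collapses, via the tail-swap involution at the rightmost intersection point, to the contribution of $\sigma=\mathrm{id}$ with non-intersecting paths. That contribution is precisely $\zeta^{{\rm sp},N}_{\lambda/\mu}(\pmb s)$, while the full signed sum equals $\det(Q(B_i,E_j))_{1\le i,j\le r}=\det(\zeta^{{\rm sp},N}_{\theta_i\#\theta_j}(\pmb s_{(\lambda_i,\lambda_j)}))_{1\le i,j\le r}$.

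The step I expect to take the most care is the second one, verifying that the weight produced by a path from $B_i$ to $E_j$ really equals $1/(M')^{\pmb s_{(\lambda_i,\lambda_j)}}$ where $M'\in SP_N(\theta_i\#\theta_j)$ is the associated symplectic filling; this requires tracking how the symplectic left boundary introduced by condition SP3 interacts with the sliding operation defining $\theta_i\#\theta_j$, and is the point at which the diagonal hypothesis on $\pmb s$ is indispensable. The $D$-compatibility and the sign-reversing involution themselves, including the subtlety that a swapped tail must remain on the correct side of the symplectic boundary, are handled verbatim as in Hamel \cite{ha}, because our weighting only affects the value attached to each step and not which steps are available.
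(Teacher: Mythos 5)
Your proposal is correct and follows exactly the route the paper intends: the paper states this theorem without a written proof, relying on the Hamel lattice-path encoding set up in the preceding paragraphs together with the Stembridge/Lindstr\"om--Gessel--Viennot sign-reversing involution (the same tail-swap at the rightmost intersection used in the paper's proof of Theorem \ref{pfaffian}), and on the diagonal hypothesis on $\pmb s$ to make $\pmb s_{\lambda_i}\#\pmb s_{\lambda_j}$ well defined, all of which you identify. Your write-up in fact supplies more detail than the paper does, and the points you flag as delicate (the weight of a $B_i\to E_j$ path matching $\zeta^{{\rm sp},N}_{\theta_i\#\theta_j}$, and the interaction with the symplectic left boundary) are exactly the ones the paper delegates to Hamel \cite{ha}.
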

\begin{exam}
\label{74}
Let $\lambda=(3,2)$ and its outside decomposition $(\theta_1,\theta_2)$ be depicted as 
\begin{center}
\begin{figure}[h]
\begin{tikzpicture}
      \node at (3.5,1) {$\ydiagram{3,2}$};
    \node at (2.9,-0.5) {$\theta_1$};
        \node at (3.5,-0.5) {$\theta_2$};
 \draw (2.9,0)--(2.9,1.4);
 \draw (3.5,0)--(3.5,1.4)--(4.2,1.4);
\end{tikzpicture}.
\end{figure}
\end{center}
Then, if $(a_{j-i})=(s_{ij})\in T^{\rm diag}(\lambda,\mathbb C)$,
\begin{align*}
    \zeta_\lambda^{{\rm sp},N}(\pmb s)=&{\rm det}\begin{pmatrix}
\zeta_{\theta_1}^{{\rm sp},N}\left(\ytableaushort{{a_{0}},{a_{-1}}}\right)&\zeta_{\theta_1\#\theta_2}^{{\rm sp},N}\left(\ytableaushort{{a_{0}}}\right)\\
\zeta_{\theta_2\#\theta_1}^{{\rm sp},N}\left(\ytableaushort{{a_1}{a_2},{a_{0}},{a_{-1}}}\right)&\zeta_{\theta_2}^{{\rm sp},N}\left(\ytableaushort{{a_1}{a_2},{a_0}}\right)
\end{pmatrix}\\
=&\zeta_{\theta_1}^{{\rm sp},N}\left(\ytableaushort{{a_{0}},{a_{-1}}}\right)\zeta_{\theta_2}^{{\rm sp},N}\left(\ytableaushort{{a_1}{a_2},{a_0}}\right)-\zeta_{\theta_1\#\theta_2}^{{\rm sp},N}\left(\ytableaushort{{a_{0}}}\right)\zeta_{\theta_2\#\theta_1}^{{\rm sp},N}\left(\ytableaushort{{a_1}{a_2},{a_{0}},{a_{-1}}}\right).
\end{align*}
\end{exam}
\begin{remark}
  \label{contentrem}
  The function in Example \ref{74} satisfies \[\zeta_{\theta_2\#\theta_1}^{{\rm sp},N}\left(\ytableaushort{{a_1}{a_2},{a_{0}},{a_{-1}}}\right)\neq\zeta_{(2,1,1)}^{{\rm sp},N}\left(\ytableaushort{{a_1}{a_2},{a_{0}},{a_{-1}}}\right)\]
  in general. We note that for $i=-1,0,1,2$ the contents of each $\ytableaushort{{a_i}}$ are not the same.
\end{remark}

\section{Orthogonal Schur Multiple zeta functions}
\label{orthogonalsec}
Hamel also composed a directed graph $D$ corresponding to the orthogonal Schur functions \cite{ha} and obtain the determinant expression of the orthogonal Schur functions. As in Section \ref{symplecticsection}, we compose a directed graph $D$ corresponding to the orthogonal Schur multiple zeta functions. As in Section \ref{symplecticsection}, we prove results corresponding to the following Hamel result.

We define orthogonal Schur multiple zeta functions.
Let $[\overline{N}]^{\infty}$ be the set $\{1,\overline{1},2,\overline{2},\ldots,$ $N,\overline{N},\infty\}$ with the total ordering $1<\overline{1}<2<\overline{2}<\cdots<N<\overline{N}<\infty$. 
For a fixed partition $\lambda$, a {\it so-tableau} $\pmb t=(t_{ij})\in T(\lambda,[\overline{N}]^{\infty})$ is obtained by numbering all the boxes of $D(\lambda)$ with letters from $[\overline{N}]^{\infty}$ such that
\begin{description}
    \item[SO1] the entries of $\pmb t$ are weakly increasing along each row of $\pmb t$,
    \item[SO2] the entries of $\pmb t$ are strictly increasing down each column of $\pmb t$,
    \item[SO3] the boxes of content $-i$ contain entries which are greater than or equal to $i+1$,
    \item[SO4] the entries equal to $\infty$ form a shape which is such that no two symbols $\infty$ appear in the same row.
\end{description}
One may find that the conditions SO1-SO3 are the same as SP1-SP3.
We denote by $SO_N(\lambda)$ the set of so-tableaux of shape $\lambda$.
Then for a given set ${ \pmb s}=(s_{ij})\in T(\lambda,\mathbb{C})$ of variables, 
{\it orthogonal Schur multiple zeta functions} of {shape} $\lambda$ are defined as
\begin{equation}
\label{orthogonaldef}
\zeta_\lambda^{{\rm so},N}({ \pmb s})=\sum_{M\in SP_N(\lambda)}\frac{1}{M^{ \pmb s}},
\end{equation}
where $M^{ \pmb s}=\displaystyle{\prod_{(i, j)\in D(\lambda)}|m_{ij}|^{s_{ij}}}$ for $M=(m_{ij})\in SO_N(\lambda)$ and $|i|=i,|\overline{i}|=i^{-1}$ for any integer $i$ and $|\infty|=1$.
We note that the $\infty$ contributes $1$ to the weight of the tableau. Therefore, they are ``dummy elements'' in a sense.
\begin{theorem}[{\cite[Theorem 3.2]{ha}}]
Let $\lambda/\mu$ be a partition of skew type. Then, for any outside decomposition $(\theta_1,\ldots,\theta_r)$ of $\lambda/\mu$,
\[{\rm so}_{\lambda/\mu}={\rm det}({\rm so}_{\theta_i\#\theta_j})_{1\le i,j\le r}.\]
\end{theorem}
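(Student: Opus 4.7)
The plan is to follow Hamel's original approach, which adapts the Gessel--Viennot lattice path methodology (in the Stembridge refinement used in Section \ref{pfaffiansection} above, i.e.\ Theorem \ref{lem31}) to a directed acyclic graph $D$ whose non-intersecting tuples of paths biject with so-tableaux. First I would construct $D$ almost exactly as in Section \ref{symplecticsection}: vertices are placed along the $y$-axis at heights labelled by the totally ordered alphabet $[\bar N]^\infty$, edges are restricted to the same horizontal/diagonal/vertical step types, and the left ``staircase'' boundary is inserted to enforce condition SO3. The new feature compared with the symplectic case is a dedicated top row at height $\infty$, with edge restrictions designed so that the $\infty$ entries of a tableau can be recovered from the positions of horizontal steps at that height subject to SO4.

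Next, given the outside decomposition $(\theta_1,\ldots,\theta_r)$, I would assign to each strip $\theta_i$ a starting vertex $B_i$ and an ending vertex $E_i$ on the perimeter of the graph using the same perimeter prescription as in Section \ref{symplecticsection}, adjusted so that strips ending on the right perimeter terminate in the $\infty$ row. The central combinatorial step is then the weight-preserving bijection between so-tableaux of shape $\lambda/\mu$ and non-intersecting $r$-tuples $(P_1,\ldots,P_r)$ with $P_i\in\mathscr{P}(B_i,E_i)$: reading each strip of the decomposition and translating each entry $i$, $\bar i$, or $\infty$ into the corresponding horizontal or diagonal step recovers the paths, while non-intersection forces SO2, the boundary forces SO3, and the structure of the $\infty$ row forces SO4.

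After establishing the bijection I would verify $D$-compatibility of $(B_i)$ with $(E_i)$, namely that for $i<j$ every path from $B_i$ to $E_j$ must meet every path from $B_j$ to $E_i$. This follows from the perimeter ordering of the strips, which guarantees the nesting behaviour needed. Theorem \ref{lem31} then yields
\[
{\rm so}_{\lambda/\mu} \;=\; \det\bigl(Q_{E_j}(B_i)\bigr)_{1\le i,j\le r},
\]
and the matrix entries are identified with ${\rm so}_{\theta_i\#\theta_j}$ by observing that the content range between $B_i$ and $E_j$ agrees with that of the strip $\theta_i\#\theta_j$, so a single path from $B_i$ to $E_j$ is precisely a valid filling of $\theta_i\#\theta_j$.

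The main obstacle I anticipate is the treatment of the $\infty$ symbol under condition SO4. Unlike SO1--SO3, which are local row/column/content constraints captured naturally by edge directions and the left boundary, SO4 is a global constraint on the placement of $\infty$'s; setting up $D$ so that the path encoding enforces it on the nose, without over- or under-counting, is the delicate point. A related subtlety is that $\theta_i\#\theta_j$ need not be a subdiagram of $\lambda/\mu$ and may require $\infty$ entries in rows where the original shape has none, so one must check that the $\infty$ row of $D$ is rich enough to accommodate all strips arising from the outside decomposition while preserving the identification $Q_{E_j}(B_i)={\rm so}_{\theta_i\#\theta_j}$.
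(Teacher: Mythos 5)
Your outline is essentially Hamel's original argument, which is exactly the construction this paper recounts in Section \ref{orthogonalsec} (the paper itself only cites the result from \cite{ha} without reproving it): the DAG with the extra $\infty$ level, the perimeter prescription for the $B_i,E_i$, the weight-preserving bijection with non-intersecting path tuples, $D$-compatibility, and the identification of the matrix entries with ${\rm so}_{\theta_i\#\theta_j}$. The only slip is that the determinant step should invoke the Lindstr\"om--Gessel--Viennot determinant lemma (\cite[Theorem 1.2]{st}) rather than Theorem \ref{lem31}, which is the pfaffian statement for paths into a totally ordered target set; with that substitution your plan is the standard proof.
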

As in the symplectic Schur multiple zeta functions, we consider the $y$-axis with $1,\overline{1},2,\overline{2},\ldots,N,\overline{N},\infty$. We define lattice paths with five types of permissible steps. These steps are the four steps in Section \ref{symplecticsection}, and up-diagonal steps from height $\overline{N}$ to height $\infty$ that increase the $x$-coordinate by $1$ and increase the $y$-coordinate by $1$. We distinguish between horizontal
steps at integer levels and horizontal steps at $\infty$. The steps are subject to the same restrictions
as in Section \ref{symplecticsection} plus the following additional restrictions: an up-vertical step must not
precede a horizontal step at $\infty$, and a down-vertical step must not precede an
up-diagonal step. We also require that all steps between lines $x=c$ and $x=c+1$ for
all $c$ are either 
\begin{enumerate}
    \item horizontal at $\infty$ or down-diagonal, or
    \item horizontal at integer levels or up-diagonal.
\end{enumerate}
The determination of whether the steps are of type (1) or (2) depends on the outside decomposition: if boxes of content $c$ are approached from the left, then steps between $x=c$ and $x=c+1$ must be of type (2); if the boxes of content $c$ are approached from below, then steps between $x=c$ and $x=c+1$ must be of type (1). We fix beginning points $B_i$ and ending points $E_i$ as in Section \ref{symplecticsection} with the adjustment that the $y$-coordinate of the highest points is $\infty+1$ instead of $\infty$. Given $\pmb s\in SO(\lambda/\mu,\mathbb C)$ with an outside decomposition, we can construct an $r$-tuple of non-intersecting lattice paths. For each strip construct a path as follows: if a box contains $i$ or $\overline{i}$, place a step as in the proof of Section \ref{symplecticsection}. If a box contains $\infty$, is at coordinates $(a,b)$ in the diagram, and is approached from the left in the
strip, put an up-diagonal step from $(a-b, \overline{N})$ to $(a-b+1,\infty)$; if it is approached from below, put a horizontal step from $(a-b, \infty)$ to $(a-b+1,\infty)$. We connect these non-vertical paths with vertical paths.

We can define the weight of each path.
Let $v_i(P)=(v_{ij}(P))_{j=0}$ be the sequence of vertices representing the path $P\in \mathscr P_{0}(B_i;E_i)$. Successively, let $v_i^w(P)=(v_{ij}^w(P))_{j=1}$ be the sub-sequence of $v_i(P)$ with $v_{ij}(P)-v_{i(j-1)}(P)= (1,0)$ or $(1,-1)$, let $v_i^1(P)=(v_{ij}^1(P))_{j=1}$ be the sub-sequence of $v_i(P)$ with $v_{ij}(P)-v_{i(j-1)}(P)= (0,1)$ or $(0,-1)$.
For $\pmb s\in T(\lambda,\mathbb C)$, we assign the weight \[w(v_{ij}^w(P))=\left\{\begin{array}{cc}|v_2|^{-s_{j}(\theta_i)}&\text{ if }v_2\neq\infty,\\
1&\text{ if }v_2=\infty
\end{array}\right.\] to $v_{ij}^w(P)=(v_1,v_2)$ with the $j$-th component $s_j(\theta_i)$ of $\theta_i$ and we assign the weight $w(v_{ij}^1(P))=1$ to $v_{ij}^1(P)$. 
Then, we define \[w(P)=\prod_{v_{ij}(P)}w(v_{ij}(P))\]
and for an $r$-tuple of non-intersecting paths of $(P_1,\ldots,P_r)$ with $P_i\in \mathscr P(B_i;E_i)$,
\[w(P_1,\ldots,P_r)=\prod_{i=1}^rw(P_i).\]
Then, due to the Hamel composition, we find that
\[\zeta_\lambda^{{\rm so},N}(\pmb s)=\sum_{P_i\in \mathscr P(B_i;E_i)}w(P_1,\ldots,P_r).\]
\begin{exam}
\label{82}For $\lambda=(5,3,3,1)$, let $(s_{ij})\in T(\lambda,\mathbb C)$. The weight $w(P_i)$ are
\begin{align*}
    w(P_1)&=1^{s_{11}},&&w(P_2)=\frac{2^{s_{21}}}{3^{s_{22}}2^{s_{12}}},\\
    w(P_3)&=\frac{3^{s_{32}}3^{s_{33}}}{3^{s_{31}}3^{s_{23}}2^{s_{13}}},&&w(P_4)=3^{s_{24}}3^{s_{14}},
\end{align*} 
and the corresponding symplectic tableau is
\[\ytableaushort{{\overline{1}}22{\overline{2}}{\infty},{\overline{2}}33,3{\overline{3}}{\overline{3}},{\infty}}.\]
\begin{figure}[h]
\begin{center}
 \begin{tikzpicture} 
    \node at (8,1) {$\bullet$};
   \node at (8,2) {$\bullet$};
   \node at (8,3) {$\bullet$};
   \node at (8,4) {$\bullet$}; 
   \node at (8,5) {$\bullet$};
   \node at (8,6) {$\bullet$}; 
   \node at (1,1) {$\bullet$};
   \node at (1,2) {$\bullet$};
   \node at (1,3) {$\bullet$};
   \node at (1,4) {$\bullet$};    
   \node at (2,1) {$\bullet$};
   \node at (2,2) {$\bullet$};
   \node at (2,3) {$\bullet$};  
   \node at (2,4) {$\bullet$};  
   \node at (3,1) {$\bullet$};
   \node at (3,2) {$\bullet$};
   \node at (3,3) {$\bullet$};  
  \node at (3,4) {$\bullet$};  
   \node at (4,1) {$\bullet$};
   \node at (4,2) {$\bullet$};
   \node at (4,3) {$\bullet$};  
   \node at (4,4) {$\bullet$};  
   \node at (5,1) {$\bullet$};
   \node at (5,2) {$\bullet$};
   \node at (5,3) {$\bullet$};  
   \node at (5,4) {$\bullet$};  
   \node at (6,1) {$\bullet$};
   \node at (6,2) {$\bullet$};
   \node at (6,3) {$\bullet$};  
   \node at (6,4) {$\bullet$}; 
   \node at (8,7) {$\bullet$};
    \node at (7,1) {$\bullet$};
   \node at (7,2) {$\bullet$};
   \node at (7,3) {$\bullet$};  
   \node at (7,4) {$\bullet$}; 
   \node at (1,5) {$\bullet$};
   \node at (2,5) {$\bullet$};  
   \node at (3,5) {$\bullet$}; 
   \node at (4,5) {$\bullet$};
   \node at (5,5) {$\bullet$};
   \node at (6,5) {$\bullet$};  
   \node at (7,5) {$\bullet$}; 
   \node at (1,6) {$\bullet$};
   \node at (2,6) {$\bullet$};  
   \node at (3,6) {$\bullet$}; 
   \node at (4,6) {$\bullet$};
   \node at (5,6) {$\bullet$};
   \node at (6,6) {$\bullet$};  
   \node at (7,6) {$\bullet$};
   \node at (1,7) {$\bullet$};
   \node at (2,7) {$\bullet$};  
   \node at (3,7) {$\bullet$}; 
   \node at (4,7) {$\bullet$};
   \node at (5,7) {$\bullet$};
   \node at (6,7) {$\bullet$};  
   \node at (7,7) {$\bullet$};
    \node at (0.5,7) {$\infty$};
    \node at (0.5,6) {$\overline{3}$};   
    \node at (0.5,5) {$3$};
    \node at (0.5,4) {$\overline{2}$};
    \node at (0.5,3) {$2$};
    \node at (0.5,2) {$\overline{1}$};
     \node at (0.5,1) {$1$};
     \node at (4,0.5) {$0$};
          \node at (8,0.5) {$4$};
     \node at (4,0.5) {$0$};
     \node at (3,0.5) {$-1$};
     \node at (2,0.5) {$-2$};
     \node at (1,0.5) {$-3$};
         \node at (3.5,1) {$B_1$};
    \node at (2.5,3) {$B_2$};   
    \node at (2,7.5) {$B_3$};
    \node at (6,7.5) {$B_4$};
    \node at (5,0.5) {$E_1$};
    \node at (6,0.5) {$E_2$};
     \node at (7,0.5) {$E_3$};
     \node at (8.5,7.5) {$E_4$};
 \draw[dotted] (4,1) -- (4,3) -- (3,3) --(3,5)-- (2,5)--(2,7);
 \draw (4,1)--(4,2) -- (5,2) -- (5,1);
\draw[loosely dashdotted] (3,3) -- (3,4) -- (4,4) -- (4,5) -- (5,5)--(5,4)--(6,3)--(6,1); 
\draw[dashdotted] (1.5,7.5) -- (2,7) -- (2,6) -- (3,5)--(3,6)--(5,6)--(6,5)--(6,4)--(7,3)--(7,1);
  \draw[dashed] (5.5,7.5) --(6,7)-- (6,6) -- (7,5)--(7,7)--(8,7) --(8,7.5);
 \end{tikzpicture}
\end{center}
\caption{$(P_1,P_2,P_3,P_4)$ in Example \ref{82}}
\label{orthogonalfigure}
\end{figure}
\end{exam}
\begin{theorem}
Let $\lambda=(\lambda_1,\ldots,\lambda_{r})$, $\mu=(\mu_1,\ldots,\mu_{s})$ be partitions. 
Then for $\pmb s\in T^{\rm diag}(\lambda/\mu,\mathbb C)$ and any outside decomposition $(\theta_1,\ldots,\theta_r)$ of $\lambda/\mu$,
\[\zeta_{\lambda/\mu}^{{\rm so},N}(\pmb s)={\rm det}(\zeta_{\theta_i\#\theta_j}^{{\rm so},N}(\pmb s_{(\lambda_i,\lambda_j)}))_{1\le i,j\le r}
,\]
where $\pmb s_{(\lambda_i,\lambda_j)}=\pmb s_{\lambda_i}\#\pmb s_{\lambda_j}$.
\end{theorem}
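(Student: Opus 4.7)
The plan is to mimic the strategy Hamel uses for orthogonal Schur polynomials and adapt it to the multiple zeta setting, exactly as was done for the symplectic case in the previous theorem. The directed graph $D$, the choice of starting points $B_i$ and ending points $E_i$, the weighting of horizontal and diagonal edges, and the identification between so-tableaux and $r$-tuples of non-intersecting lattice paths have all been set up in the paragraphs preceding the theorem. What remains is (i) to verify $D$-compatibility of $(B_i)$ with $(E_i)$, (ii) to identify the weighted sum over all (not necessarily non-intersecting) $r$-tuples $(P_1,\dots,P_r)$ with $P_i\in \mathscr P(B_{\sigma(i)};E_i)$ as the expansion of the determinant on the right-hand side, and (iii) to show that a sign-reversing involution cancels all summands coming from $r$-tuples containing at least one pair of intersecting paths, leaving only the contribution of $\zeta_{\lambda/\mu}^{{\rm so},N}(\pmb s)$.

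First I would make (i) precise: the starting points $B_i$ lie on the left or bottom perimeter and the ending points $E_i$ lie on the right or top perimeter of the appropriate region, and the added possibilities of up-diagonal steps to height $\infty$ and horizontal steps at $\infty$ do not create any new way for two paths $P\in \mathscr P(B_i,E_j)$, $Q\in \mathscr P(B_{i'},E_{j'})$ with $i<i'$ and $j>j'$ to avoid each other. The point is that the extra $\infty$-row and the up-diagonal step from $\overline N$ to $\infty$ are inserted \emph{above} all finite vertices, so the left-to-right ordering on the perimeter is preserved and two crossing paths must still share a vertex. This is exactly the observation Hamel makes in the orthogonal case, and it carries over verbatim because $D$-compatibility depends only on the topology of $D$, not on the weights.

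Next I would handle (ii). Using the Stembridge/Lindström–Gessel–Viennot theorem, the determinant $\det\bigl(\zeta^{{\rm so},N}_{\theta_i\#\theta_j}(\pmb s_{(\lambda_i,\lambda_j)})\bigr)_{1\le i,j\le r}$ expands as
\[
\sum_{\sigma\in\mathfrak S_r}{\rm sgn}(\sigma)\sum_{(P_1,\dots,P_r)}\prod_{i=1}^{r} w(P_i),
\]
where the inner sum is over all $r$-tuples with $P_i\in\mathscr P(B_{\sigma(i)};E_i)$. Here I would use the key combinatorial identity (proved by Hamel in \cite{ha} and used also in the symplectic case) that the weight of a path from $B_i$ to $E_j$ in $D$ is precisely $M^{\pmb s_{\theta_i\#\theta_j}}$ summed over all orthogonal fillings of the strip $\theta_i\#\theta_j$; this is where the hypothesis $\pmb s\in T^{\rm diag}(\lambda/\mu,\mathbb C)$ enters, since it guarantees that the $\#$-operation on the index tableau is well-defined and that the weight along a path depends only on the contents of the corresponding boxes.

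Finally, for (iii) I would adapt the sign-reversing involution used in the proof of Theorem \ref{pfaffian} (and in the symplectic theorem): pick the right-most intersection point of two paths in a crossing $r$-tuple and swap tails. This changes the permutation $\sigma$ by a transposition (hence flips the sign) while preserving the total weight, because along the swapped portion the $x$-coordinates and the content-dependent weights agree thanks to the $T^{\rm diag}$ hypothesis. All crossing configurations cancel in pairs, and only non-intersecting $r$-tuples (which correspond bijectively to so-tableaux of shape $\lambda/\mu$) survive, yielding $\zeta_{\lambda/\mu}^{{\rm so},N}(\pmb s)$. The main obstacle I anticipate is a careful bookkeeping of the new $\infty$-level steps in the involution: one must check that the swap never converts a legal path into one that violates the ordering rules (types (1) vs. (2) between consecutive vertical lines, or the prohibition on up-vertical steps preceding horizontal-at-$\infty$ steps). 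Because these restrictions are determined by the content $c$ of the column rather than by which strip a box belongs to, the swap at a crossing point respects them, and the argument concludes as in Example \ref{74} above.
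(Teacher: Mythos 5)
Your proposal is correct and follows essentially the same route as the paper, which establishes the identity $\zeta_\lambda^{{\rm so},N}(\pmb s)=\sum_{P_i\in \mathscr P(B_i;E_i)}w(P_1,\ldots,P_r)$ via Hamel's lattice-path model (with the extra $\infty$-level and up-diagonal steps) and then invokes the Stembridge/Lindstr\"om--Gessel--Viennot machinery, exactly the three steps you outline. The points you flag --- $D$-compatibility surviving the $\infty$-row, the weight of a $B_i\to E_j$ path family equalling $\zeta^{{\rm so},N}_{\theta_i\#\theta_j}$ thanks to the $T^{\rm diag}$ hypothesis, and the tail-swap involution respecting the type (1)/(2) step restrictions because these depend only on content --- are precisely the checks the paper delegates to Hamel's construction.
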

\begin{exam}Let $\lambda=(3,2)$ and its outside decomposition $(\theta_1,\theta_2)$ be depicted as 
\begin{center}
\begin{figure}[h]
\begin{tikzpicture}
      \node at (3.5,1) {$\ydiagram{3,2}$};
    \node at (2.9,-0.5) {$\theta_1$};
        \node at (3.5,-0.5) {$\theta_2$};
 \draw (2.9,0)--(2.9,1.4);
 \draw (3.5,0)--(3.5,1.4)--(4.2,1.4);
\end{tikzpicture}.
\end{figure}
\end{center}
Then, if $(a_{j-i})=(s_{ij})\in T^{\rm diag}(\lambda,\mathbb C)$, we obtain
\begin{align*}
    \zeta_\lambda^{{\rm so},N}(\pmb s)=&{\rm det}\begin{pmatrix}
\zeta_{\theta_1}^{{\rm so},N}\left(\ytableaushort{{a_{0}},{a_1}}\right)&\zeta_{\theta_1\#\theta_2}^{{\rm so},N}\left(\ytableaushort{{a_{0}}}\right)\\
\zeta_{\theta_2\#\theta_1}^{{\rm so},N}\left(\ytableaushort{{a_1}{a_2},{a_{0}},{a_{-1}}}\right)&\zeta_{\theta_2}^{{\rm so},N}\left(\ytableaushort{{a_1}{a_2},{a_0}}\right)
\end{pmatrix}\\
=&\zeta_{\theta_1}^{{\rm so},N}\left(\ytableaushort{{a_{0}},{a_1}}\right)\zeta_{\theta_2}^{{\rm so},N}\left(\ytableaushort{{a_1}{a_2},{a_0}}\right)-\zeta_{\theta_1\#\theta_2}^{{\rm so},N}\left(\ytableaushort{{a_{0}}}\right)\zeta_{\theta_2\#\theta_1}^{{\rm so},N}\left(\ytableaushort{{a_1}{a_2},{a_{0}},{a_{-1}}}\right).
\end{align*}
\end{exam}
\section{Decomposition of Symplectic and Orthogonal multiple zeta functions}
\label{decompsec}
In this section, we write a symplectic and an orthogonal multiple zeta function as a linear combination of the truncated multiple zeta functions. By the Inclusion-Exclusion principle, we may find the following decompositions.
\begin{theorem}
For any positive integer $N$ and $s_i\in \mathbb C$, we have
\begin{align*}
    \zeta_{(\{1\}^r)}^{{\rm sp},N}\left(\ytableaushort{{s_1},{\vdots},{s_{r}}}\right)=&\sum_{\rm sign}\zeta^N(\pm s_1,\ldots,\pm s_r)\\
    &+\sum_{i=1}^{r-1}\sum_{\rm sign}\zeta^N(\pm s_1,\ldots,\pm s_{i-1},s_{i}-s_{i+1},\pm s_{i+2},\ldots,\pm s_r)\\
    &-\sum_{i=1}^{r-1}\sum_{\rm sign}\left(\prod_{j=1}^{i-1}j^{\pm s_j}\right)\frac{i^{s_{i+1}}}{i^{s_{i}}}\zeta^N(\{0\}^i,\pm s_{i+2},\ldots,\pm s_r),
\end{align*}
where $\displaystyle{\sum_{\rm sign}}$ means the summation over all cases of plus-minus signs. 
\end{theorem}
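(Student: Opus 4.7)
The plan is to expand $\zeta_{(\{1\}^r)}^{\mathrm{sp},N}(\pmb s)$ directly from its definition as a sum over symplectic column tableaux, re-parametrize the entries, and apply an inclusion--exclusion, consistent with the opening remark of the section.

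First, I would write each entry as $m_i=(n_i,\epsilon_i)$ with $n_i\in\{1,\dots,N\}$ and $\epsilon_i\in\{+,-\}$, interpreting $\epsilon_i=+$ as $m_i=n_i$ and $\epsilon_i=-$ as $m_i=\overline{n_i}$. Under this encoding the weight is $\prod_{i}n_i^{-\epsilon_i s_i}$, the strict order $m_i<m_{i+1}$ translates to ``$n_i<n_{i+1}$, or $n_i=n_{i+1}$ together with $(\epsilon_i,\epsilon_{i+1})=(+,-)$,'' and the symplectic bound $m_i\ge i$ becomes $n_i\ge i$. Since $\epsilon_{i+1}$ cannot be both $-$ and $+$, no three consecutive $n_i$'s coincide, so the set $T\subseteq\{1,\dots,r-1\}$ recording the positions with $n_i=n_{i+1}$ is forced to be an antichain.

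Second, I would split the sum by $T$. For $T=\emptyset$ the sequence $n_1<\cdots<n_r$ automatically satisfies $n_i\ge i$, and summing over all sign choices yields the first family $\sum_{\mathrm{sign}}\zeta^{N}(\pm s_1,\dots,\pm s_r)$. For a single collapse $T=\{i\}$, the merged pair $n_i=n_{i+1}=n$ with fixed signs $(+,-)$ contributes the combined exponent $s_{i+1}-s_i$ on $n$, the other signs remain free, and the collapsed length-$(r-1)$ sum is strictly increasing; this produces the second family $\sum_{\mathrm{sign}}\zeta^{N}(\pm s_1,\dots,\pm s_{i-1},s_i-s_{i+1},\pm s_{i+2},\dots,\pm s_r)$, up to the correction described below.

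Third, I would apply inclusion--exclusion to fix the mismatch between the symplectic bound $n\ge i+1$ at the collapse (forced by $n=n_{i+1}\ge i+1$) and the weaker bound $n\ge i$ implicit in the strict MZV sum. The over-counted configurations are those with the collapsed value equal to $i$, which by strict increase forces $\tilde n_j=j$ for all $j\le i$ and contributes the factor $\prod_{j=1}^{i-1}j^{\pm s_j}\cdot i^{s_{i+1}-s_i}$; the remaining tail varies over strictly increasing sequences in $\{i+1,\dots,N\}$ and is written as $\zeta^{N}(\{0\}^{i},\pm s_{i+2},\dots,\pm s_r)$, where the leading $\{0\}^i$ block is interpreted as shifting the starting index. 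Subtracting produces the negative third family, and the three contributions together give the stated identity.

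The main obstacle will be handling sets $T$ with $|T|\ge 2$, which first arise at $r=4$: one must verify, by iterating the same inclusion--exclusion (or by consolidating terms via the determinantal lattice-path expression of Section~\ref{symplecticsection} specialized to the column shape), that every multi-collapse contribution is already accounted for by the three families. Keeping track of how the symplectic bound $n_i\ge i$ interacts with repeated sign constraints across successive collapses is the most delicate bookkeeping step in the argument.
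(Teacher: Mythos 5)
Your reduction of the column-shaped symplectic tableaux to pairs $(n_i,\epsilon_i)$ is correct: the strict order in $1<\overline1<2<\overline2<\cdots$ forces a collapse $n_i=n_{i+1}$ to carry the signs $(+,-)$, the collapse set $T\subseteq\{1,\dots,r-1\}$ can contain no two consecutive indices, and the symplectic condition reads $n_j\ge j$. Your treatment of $T=\emptyset$ and $|T|=1$ is also right, and it correctly produces all three families (with the correction term arising exactly from the forced configuration $n_1=1,\dots,n_{i-1}=i-1$, collapsed value $=i$). This is as much as the paper itself does: the paper offers only the sentence ``By the Inclusion-Exclusion principle, we may find the following decompositions'' and checks $r\le 2$, so your write-up is already more detailed than the source.

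The problem is the step you defer: the claim that ``every multi-collapse contribution is already accounted for by the three families'' is not a bookkeeping detail to be verified later --- it is false, and the proof cannot be closed along the proposed lines. Take $r=4$, $T=\{1,3\}$ and the tableau $(2,\overline2,4,\overline4)$ read down the column: it is a valid symplectic tableau and contributes $2^{s_2-s_1}4^{s_4-s_3}$ to the left-hand side. On the right-hand side, family~1 is supported on four distinct values, each family~2 term merges exactly one adjacent pair (its remaining $r-2$ variables are strictly increasing, so it never produces two merged exponents $s_1-s_2$ and $s_3-s_4$ simultaneously), and family~3 is supported on configurations pinned to $1,2,\dots,i$; none of them produces this monomial for generic $s$. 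So the right-hand side equals the restriction of the left-hand side to $|T|\le 1$, and the identity as stated fails for $r\ge4$ unless additional terms indexed by all sparse subsets $T$ (no two consecutive elements), each with its own symplectic-boundary correction, are added. You should either restrict your argument to $r\le3$, where your two cases exhaust all possibilities and the proof is complete, or carry out the full inclusion--exclusion over sparse $T$ and record the extra terms explicitly; appealing to the determinant expression of Section~\ref{symplecticsection} will not rescue the three-family form either, since the discrepancy is already visible in the generating monomials.
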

\begin{exam}[$r\le2$]
For any positive integer $N$ and $a,b\in \mathbb C$, we have
\begin{align*}
\zeta_{(1)}^{{\rm sp},N}\left(\ytableaushort{a}\right)&=\zeta^N(a)+\zeta^{N}(-a),\\
    \zeta_{(1,1)}^{{\rm sp},N}\left(\ytableaushort{a,b}\right)&=\zeta^N(a,b)+\zeta^{N}(-a,-b)+\zeta^N(-a,b)+\zeta^{N}(a,-b)+\zeta^N(a-b)-1.
\end{align*}
\end{exam}
We may apply a similar argument to the orthogonal Schur multiple zeta functions.
\begin{theorem}
Let $r$ be an integer greater than $1$.
For any positive integer $N$ and $s_i\in \mathbb C$, we have
\begin{align*}
    \zeta_{(\{1\}^r)}^{{\rm so},N}\left(\ytableaushort{{s_1},{\vdots},{s_{r}}}\right)=&\sum_{R=r-1}^r\sum_{\rm sign}\zeta^N(\pm s_1,\ldots,\pm s_R)\\
    &+\sum_{R=r-1}\sum_{i=1}^{R-1}\sum_{\rm sign}\zeta^N(\pm s_1,\ldots,\pm s_{i-1},s_{i}-s_{i+1},\pm s_{i+2},\ldots,\pm s_R)\\
    &-\sum_{R=r-1}\sum_{i=1}^{R-1}\sum_{\rm sign}\left(\prod_{j=1}^{i-1}j^{\pm s_j}\right)\frac{i^{s_{i+1}}}{i^{s_{i}}}\zeta^N(\{0\}^i,\pm s_{i+2},\ldots,\pm s_R),
\end{align*}
where $\displaystyle{\sum_{\rm sign}}$ means the summation over all cases of plus-minus signs. 
\end{theorem}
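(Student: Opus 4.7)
The plan is to reduce the orthogonal identity to the symplectic decomposition established in the preceding theorem. The key observation is that in a one-column shape the extra letter $\infty$ cannot appear more than once, so each so-tableau is either a symplectic tableau of shape $(\{1\}^r)$ or a symplectic tableau of shape $(\{1\}^{r-1})$ with an $\infty$ attached at its bottom box.

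First, I would analyze an arbitrary $M=(m_{i1})\in SO_N((\{1\}^r))$. Since the column is strictly increasing by SO2 and $\infty$ is the maximum of the ordering $1<\overline{1}<\cdots<N<\overline{N}<\infty$, the symbol $\infty$ occurs in at most one box, and when it does it must lie in the last box $(r,1)$. The non-repetition condition SO4 is automatic in the column case. Moreover, deleting this $\infty$ (if present) leaves a column of length $r-1$ whose entries still satisfy SP1--SP3, because the content of $(i,1)$ equals $1-i$ and is unchanged by removing row $r$. Together with $|\infty|^{s_r}=1^{s_r}=1$, this yields the clean decomposition
\[
\zeta_{(\{1\}^r)}^{{\rm so},N}\!\left(\ytableaushort{{s_1},{\vdots},{s_{r}}}\right)
=\zeta_{(\{1\}^r)}^{{\rm sp},N}\!\left(\ytableaushort{{s_1},{\vdots},{s_{r}}}\right)
+\zeta_{(\{1\}^{r-1})}^{{\rm sp},N}\!\left(\ytableaushort{{s_1},{\vdots},{s_{r-1}}}\right).
\]
The hypothesis $r\geq 2$ is exactly what ensures that the second summand is well defined.

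With this reduction in hand, I would substitute the symplectic decomposition from the previous theorem into each of the two terms on the right. The first summand produces the $R=r$ contribution and the second the $R=r-1$ contribution to each of the three families of terms appearing in the target formula, namely the all-signed truncated zetas $\zeta^N(\pm s_1,\ldots,\pm s_R)$, the terms in which a pair of consecutive signed entries is replaced by a single merged entry $s_i-s_{i+1}$, and the correction terms with $\{0\}^i$ arguments and the prefactor $\bigl(\prod_{j=1}^{i-1}j^{\pm s_j}\bigr)i^{s_{i+1}}/i^{s_{i}}$. Collecting everything into a single outer sum over $R\in\{r-1,r\}$ reproduces the stated expression exactly.

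The only place where care is needed is the bijective decomposition of $SO_N((\{1\}^r))$ into its two pieces; for more general shapes one would have to track how the $\infty$-cells form an admissible sub-shape under SO4, which makes the combinatorics considerably more delicate. In the one-column case this difficulty evaporates, and the result becomes a direct corollary of the symplectic decomposition theorem.
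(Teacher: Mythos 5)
Your reduction $\zeta_{(\{1\}^r)}^{{\rm so},N}=\zeta_{(\{1\}^r)}^{{\rm sp},N}(s_1,\ldots,s_r)+\zeta_{(\{1\}^{r-1})}^{{\rm sp},N}(s_1,\ldots,s_{r-1})$ --- valid because in a single column the condition SO4 is vacuous, strict increase down the column forces at most one $\infty$, necessarily in the bottom box, and that box contributes weight $|\infty|^{-s_r}=1$ --- is correct and is precisely the ``similar argument'' the paper alludes to without writing out: the outer sum over $R\in\{r-1,r\}$ in the statement is exactly this two-term decomposition. Note that your derivation produces $\sum_{R=r-1}^{r}$ in all three families (the missing upper limit $r$ in the paper's second and third lines is evidently a typo), and this is confirmed by the paper's $r=2$ example, where the terms $\zeta^N(a-b)-1$ can only come from the $R=r=2$ piece.
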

\begin{exam}[$r\le2$]
For any positive integer $N$ and $a,b\in \mathbb C$, we have
\begin{align*}
\zeta_{(1)}^{{\rm so},N}\left(\ytableaushort{a}\right)=&\zeta^N(a)+\zeta^{N}(-a)+1,\\
    \zeta_{(1,1)}^{{\rm so},N}\left(\ytableaushort{a,b}\right)=&\zeta^N(a,b)+\zeta^{N}(-a,-b)+\zeta^N(-a,b)+\zeta^{N}(a,-b)+\zeta^N(a-b)-1\\
    &+\zeta^N(a)+\zeta^{N}(-a).
\end{align*}
\end{exam}
If we use the decompositions by rows as an outside decomposition of $\lambda/\mu$, then for any $\pmb s\in T^{\rm diag}(\lambda/\mu,\mathbb C)$, $\zeta_{\lambda/\mu}^{{\rm sp},N}(\pmb s)$ and $\zeta_{\lambda/\mu}^{{\rm so},N}(\pmb s)$ look like decomposed into a sum of $\zeta_{(\{1\}^r)}^{{\rm sp},N}$ and $\zeta_{(\{1\}^r)}^{{\rm so},N}$, respectively.
As in Remark \ref{contentrem}, we note that the outside decomposition and operation $\theta_i\#\theta_j$ keeps the content and there may be two different function associated with the same shape $\lambda=(\{1\}^r)$ and the same variable $\pmb s=(s_{ij})$.

Similarly, we find the following results, which give a decomposition of symplectic zeta function into a sum of truncated multiple zeta star functions.
\begin{theorem}
For any positive integer $N$ and $s_i\in \mathbb C$, we have
\begin{align*}
    \zeta_{(r)}^{{\rm sp},N}\left(\ytableaushort{{s_1}{\cdots}{s_{r}}}\right)=&\sum_{\rm sign}\sum_{\pmb \ell}(-1)^{r-{\rm dep}(\pmb\ell)}{\zetas}^N(\pmb \ell_r),
    \intertext{and for $r\ge2$}
    \zeta_{(r)}^{{\rm so},N}\left(\ytableaushort{{s_1}{\cdots}{s_{r}}}\right)=&\sum_{R=r-1}^r\sum_{\rm sign}\sum_{\pmb \ell}(-1)^{R-{\rm dep}(\pmb\ell)}{\zetas}^N(\pmb \ell_R),
\end{align*}
where $\displaystyle{\sum_{\rm sign}}$ means the summation over all cases of plus-minus signs and $\pmb \ell$ runs over all indices of the form
$\pmb \ell_R = (\pm s_{1}\square \pm s_{2}\square\cdots\square \pm s_{R})$
in which each $\square$ is filled by the comma $,$ or the plus $+$. If $\square=+$ then $\pm s_j\square\pm s_{j+1}$ is assigned $s_{j+1}-s_{j}$ and the square is not filled consecutive plus signs $+$.
\end{theorem}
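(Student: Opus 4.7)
The plan is to parametrise each symplectic (resp.\ orthogonal) tableau of shape $(r)$ as a sign-labelled weakly increasing integer sequence, apply inclusion--exclusion to replace the forced-strict inequalities by weak ones, and then identify the resulting sums as truncated multiple zeta-star values indexed by the admissible $\pmb\ell_R$ in the statement. First I would observe that every box in the single row $(r)$ has non-negative content, so condition \textbf{SP3} is vacuous and $SP_N((r))$ consists exactly of weakly increasing sequences $m_1\le\cdots\le m_r$ in $[\overline N]$. Writing $m_j$ as $n_j$ (unbarred) or $\overline{n_j}$ (barred), I record the choice by a sign $\sigma_j\in\{+,-\}$ so that $|m_j|^{-s_j}=n_j^{-\sigma_j s_j}$ and the $j$-th argument of ${\zetas}^N$ is $\sigma_j s_j$. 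A direct case check of the ordering $1<\overline 1<2<\overline 2<\cdots$ shows that $m_j\le m_{j+1}$ translates to $n_j\le n_{j+1}$ in every case except $(\sigma_j,\sigma_{j+1})=(-,+)$, where it becomes the strict $n_j<n_{j+1}$; denote this forced-strict index set by $S(\sigma)$.

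Next I would apply $\mathbf 1[n_j<n_{j+1}]=\mathbf 1[n_j\le n_{j+1}]-\mathbf 1[n_j=n_{j+1}]$ at each $j\in S(\sigma)$ to obtain
\[
\zeta_{(r)}^{{\rm sp},N}(\pmb s)
=\sum_{\sigma\in\{+,-\}^r}\sum_{T\subseteq S(\sigma)}(-1)^{|T|}
\sum_{\substack{n_1\le\cdots\le n_r\\ n_j=n_{j+1}\;(j\in T)}}
\prod_{j=1}^r n_j^{-\sigma_j s_j}.
\]
The subset $T$ collapses positions into blocks of size one or two (positions $j$ and $j+1$ merge whenever $j\in T$), and since $T\subseteq S(\sigma)$ cannot contain consecutive integers, these blocks are well-defined. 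The innermost sum is then a truncated zeta-star value whose $i$-th argument is $\sum_{j\in B_i}\sigma_j s_j$. For a size-two block the forced signs $\sigma_j=-,\sigma_{j+1}=+$ produce the merged entry $-s_j+s_{j+1}=s_{j+1}-s_j$, matching the assignment rule in the statement, and the ``no consecutive $+$'s'' restriction on $\pmb\ell_r$ corresponds exactly to the impossibility of $j,j+1$ both lying in $T$. Since $|T|=r-{\rm dep}(\pmb\ell)$, re-indexing the double sum over $(\sigma,T)$ as a single sum over admissible $\pmb\ell_r$ yields the sign $(-1)^{r-{\rm dep}(\pmb\ell)}$ and establishes the symplectic identity.

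The orthogonal case reduces to the symplectic one. Conditions \textbf{SO1}--\textbf{SO4} with shape $(r)$ force at most one entry equal to $\infty$, situated necessarily at the rightmost box, and such an $\infty$ contributes weight $1$ to the tableau. Splitting $SO_N((r))$ by the presence or absence of $\infty$ gives
\[
\zeta_{(r)}^{{\rm so},N}(\pmb s)=\zeta_{(r)}^{{\rm sp},N}(\pmb s)+\zeta_{(r-1)}^{{\rm sp},N}(s_1,\ldots,s_{r-1}),
\]
and applying the symplectic identity to each summand produces the outer sum over $R\in\{r-1,r\}$ exactly as claimed.

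The principal obstacle is the book-keeping of the bijection $(\sigma,T)\leftrightarrow\pmb\ell_R$. One must check that the forced-sign convention at the two positions flanking each $+$-square, combined with the free signs at the remaining positions, enumerates each admissible $\pmb\ell_R$ exactly once, and that the competing conventions---the weight $n_j^{-\sigma_j s_j}$ on the tableau side against the $\zetas^N$-argument format on the other---are tracked so that the merged entry consistently orients as $s_{j+1}-s_j$ and not as $s_j-s_{j+1}$.
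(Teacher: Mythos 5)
Your proposal is correct and follows essentially the same route the paper takes: the paper only asserts these decompositions ``by the Inclusion--Exclusion principle,'' and your argument is precisely that computation carried out in full --- translating the ordering $1<\overline{1}<2<\overline{2}<\cdots$ into weak inequalities that become strict exactly at the sign pattern $(-,+)$, applying inclusion--exclusion there, and splitting off the optional trailing $\infty$ for the orthogonal case. The bookkeeping you flag as the main obstacle checks out against the paper's $r=2$ examples (in particular the single occurrence of $-\zeta^{\star N}(b-a)$ confirms that the merged entry carries forced signs and appears with multiplicity one).
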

\begin{exam}[$r\le2$]
For any positive integer $N$ and $a,b\in \mathbb C$, we have
\begin{align*}
\zeta_{(1)}^{{\rm sp},N}\left(\ytableaushort{a}\right)=&{\zetas}^N(a)+{\zetas}^N(-a),\\
    \zeta_{(2)}^{{\rm sp},N}\left(\ytableaushort{ab}\right)=&{\zetas}^N(a,b)+{\zetas}^N(-a,-b)+{\zetas}^N(-a,b)+{\zetas}^N(a,-b)-{\zetas}^N(b-a),\\
    \zeta_{(1)}^{{\rm so},N}\left(\ytableaushort{a}\right)=&{\zetas}^N(a)+{\zetas}^N(-a)+1,\\
    \zeta_{(2)}^{{\rm so},N}\left(\ytableaushort{ab}\right)=&{\zetas}^N(a,b)+{\zetas}^N(-a,-b)+{\zetas}^N(-a,b)+{\zetas}^N(a,-b)-{\zetas}^N(b-a)\\
    &+{\zetas}^N(a)+{\zetas}^N(-a).
\end{align*}
\end{exam}

\section{Schur quasi-symmetric functions}
\label{sec:Sqsf}

 We here investigate Schur multiple zeta functions from the viewpoint of the quasi-symmetric functions introduced by Gessel \cite{G} (See. \cite{npy}).

\subsection{Quasi-symmetric functions}
 
 Let ${\pmb t}=(t_1,t_2,\ldots)$ be variables and
 $\mathfrak{P}$ a subalgebra of $\mathbb{Z}[\![t_1,t_2,\ldots\,]\!]$ consisting of all formal power series with integer coefficients of bounded degree.
 We call $p=p({\pmb t})\in \mathfrak{P}$ a {\it quasi-symmetric function}
 if the coefficient of $t^{\gamma_1}_{k_1}t^{\gamma_2}_{k_2}\cdots t^{\gamma_n}_{k_n}$ of $p$ is the same as
 that of $t^{\gamma_1}_{h_1}t^{\gamma_2}_{h_2}\cdots t^{\gamma_l}_{h_n}$ of $p$ whenever $k_1<k_2<\cdots <k_n$ and $h_1<h_2<\cdots <h_n$.
 The algebra of all quasi-symmetric functions is denoted by $\mathrm{Qsym}$. 
 For a composition ${\pmb \gamma}=(\gamma_1,\gamma_2,\ldots,\gamma_n)$ of a positive integer,
 define the {\it monomial quasi-symmetric function} $M_{\pmb \gamma}$ and 
 the {\it essential quasi-symmetric function} $E_{\pmb \gamma}$ respectively by
\[
 M_{\pmb \gamma}
=\sum_{m_1<m_2<\cdots<m_n}t_{m_1}^{\gamma_1}t_{m_2}^{\gamma_2}\cdots t^{\gamma_n}_{m_n},
\quad
 E_{\pmb \gamma}
=\sum_{m_1\le m_2\le \cdots \le m_n}t_{m_1}^{\gamma_1}t_{m_2}^{\gamma_2}\cdots t^{\gamma_n}_{m_n}.
\]
 We know that these respectively form the integral basis of $\mathrm{Qsym}$.
 Notice that 
\begin{equation}
\label{for:EM}
 E_{\pmb \gamma}
=\sum_{{\pmb \delta} \,\preceq\, {\pmb \gamma}}M_{\pmb \delta}, \quad 
 M_{\pmb \gamma}
=\sum_{{\pmb \delta} \,\preceq\, {\pmb \gamma}}(-1)^{n-\ell({\pmb \delta})}E_{\pmb \delta}.
\end{equation}
 A relation between the multiple zeta values and the quasi-symmetric functions is studied by Hoffman \cite{H2}
 (remark that the notations of multiple zeta (star) function in \cite{H2} are different from ours;
 they are $\zeta(s_n,s_{n-1},\ldots,s_1)$ and $\zeta^{\star}(s_n,s_{n-1},\ldots,s_1)$, respectively, in our notations).  
 Let $\mathfrak{H}=\mathbb{Z}\langle x,y\rangle$ be the noncommutative polynomial algebra over $\mathbb{Z}$.
 We can define a commutative and associative multiplication $\ast$, called a $\ast$-product, on $\mathfrak{H}$.
 We call $(\mathfrak{H},\ast)$ the (integral) harmonic algebra.   
 Let $\mathfrak{H}^{1}=\mathbb{Z}1+y\mathfrak{H}$, which is a subalgebra of $\mathfrak{H}$.
 Notice that every $w\in \mathfrak{H}^{1}$ can be written as 
 an integral linear combination of $z_{\gamma_1}z_{\gamma_2}\cdots z_{\gamma_n}$ where $z_{\gamma}=yx^{\gamma-1}$ for $\gamma\in\mathbb{N}$.
 For each $N\in \mathbb{N}$,
 define the homomorphism $\phi_N:\mathfrak{H}^{1}\to \mathbb{Z}[t_1,t_2,\ldots,t_N]$ by $\phi_N(1)=1$ and 
\[
 \phi_N(z_{\gamma_1}z_{\gamma_2}\cdots z_{\gamma_n})
=
\begin{cases}
 \displaystyle{\sum_{m_1<m_2<\cdots<m_n\le N}t_{m_1}^{\gamma_1}t_{m_2}^{\gamma_2}\cdots t^{\gamma_n}_{m_n}} & n\le N, \\
 0 & \text{otherwise},
\end{cases}
\]
 and extend it additively to $\mathfrak{H}^{1}$.
 There is a unique homomorphism $\phi:\mathfrak{H}^{1}\to \mathfrak{P}$ such that $\pi_N\phi=\phi_N$
 where $\pi_N$ is the natural projection from $\mathfrak{P}$ to $\mathbb{Z}[t_1,t_2,\ldots,t_N]$.
 We have $\phi(z_{\gamma_1}z_{\gamma_2}\cdots z_{\gamma_n})=M_{(\gamma_1,\gamma_2,\ldots,\gamma_n)}$.
 Moreover, as is described in \cite{H2}, $\phi$ is an isomorphism between $\mathfrak{H}^1$ and $\mathrm{Qsym}$.

 Let $e$ be the function sending $t_i$ to $\frac{1}{i}$. 
 Moreover, define $\rho_N:\mathfrak{H}^{1}\to\mathbb{R}$ by $\rho_N=e\phi_N$.
 For a composition ${\pmb \gamma}$, we have 
\[
 \rho_N\phi^{-1}(M_{\pmb \gamma})=\zeta^{N}({\pmb \gamma}), \quad 
 \rho_N\phi^{-1}(E_{\pmb \gamma})=\zeta^{N\star}({\pmb \gamma}). 
\]
 Define the map $\rho:\mathfrak{H}^1\to\mathbb{R}^{\mathbb{N}}$ by 
 $\rho(w)=(\rho_N(w))_{N\in\mathbb{N}}$ for $w\in\mathfrak{H}^{1}$. 
 Notice that if $w\in \mathfrak{H}^0=\mathbb{Z}1+y\mathfrak{H}x$, which is a subalgebra of $\mathfrak{H}^1$,
 then we may understand that $\rho(w)=\lim_{N\to\infty}\rho_N(w)\in\mathbb{R}$.
 In particular, for a composition ${\pmb \gamma}=(\gamma_1,\gamma_2,\ldots,\gamma_n)$ with $\gamma_n\ge 2$,
 we have 
\begin{equation}
\label{for:rho}
 \rho\phi^{-1}(M_{\pmb \gamma})
=\zeta({\pmb \gamma}), \quad
 \rho\phi^{-1}(E_{\pmb \gamma})
=\zeta^{\star}({\pmb \gamma}).
\end{equation}

\subsection{Schur $P$- and $Q$-type quasi-symmetric functions}

 Now, one easily reaches the definition of the following {\it Schur $P$- and $Q$-type quasi-symmetric functions}. 
 For strict partitions $\lambda$ and $\mu$, and ${\pmb s}=(s_{ij})\in ST(\lambda/\mu,\mathbb{C})$,
 we define Schur $P$- and $Q$-type quasi-symmetric functions associated with $\lambda/\mu$ by
\begin{equation}
\label{def:skewQSPMZ}
 S_{\lambda/\mu}^P({\pmb s})
=\sum_{M\in PSST(\lambda/\mu)}\displaystyle{\prod_{(i, j)\in SD(\lambda)}|m_{ij}|^{s_{ij}}},
\end{equation}
and
\begin{equation}
\label{def:skewQSQMZ}
 S_{\lambda/\mu}^Q({\pmb s})
=\sum_{M\in QSST(\lambda/\mu)}\displaystyle{\prod_{(i, j)\in SD(\lambda)}|m_{ij}|^{s_{ij}}}.
\end{equation}
\begin{theorem}
Let $\lambda=(\lambda_1,\ldots,\lambda_{r})$, $\mu=(\mu_1,\ldots,\mu_{s})$ be strict partitions into with $\lambda_i\ge0$ and $2|r+s$. 
Then for $\pmb s\in ST^{\rm diag}(\lambda/\mu,\mathbb C)$,
\[S_{\lambda/\mu}^Q(\pmb s)={\rm pf}\begin{pmatrix}
M_\lambda&H_{\lambda,\mu}\\
0&0
\end{pmatrix}
,\]
where $M_\lambda=(a_{ij})$ is an $r\times r$ upper triangular matrix with \[a_{ij}=S_{(\lambda_i,\lambda_j)}^Q(\pmb s_{(\lambda_i,\lambda_j)}),\]
\[\pmb s_{(\lambda_i,\lambda_j)}=\ytableaushort{{s_{ii}}{\cdots}{\cdots}{\cdots}{s_{it_i}},{\none}{s_{jj}}{\cdots}{s_{jt_j}}},\]
 where $t_i=i+\lambda_i-1$ and $H_\lambda=(b_{ij})$ is an $r\times s$ matrix with \[b_{ij}=S_{(\lambda_i-\mu_s-j+1)}^Q(s_{i(i+j+\mu_s-1)},\ldots,s_{it_i}).\]
\end{theorem}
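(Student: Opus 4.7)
My plan is to transcribe the lattice-path argument that established Theorem \ref{skewpfaffian} into the formal-power-series setting of quasi-symmetric functions, since the combinatorial skeleton is unchanged and only the weights attached to the paths differ. Replacing the multiplicative analytic weight $|m|^{-s}$ by the formal monomial weight encoding the quasi-symmetric contribution at entry $m$ makes no difference to any of the bijective steps, so the entire pfaffian calculus carries over.

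First I would construct the same directed graph $D$ as in Section \ref{pfaffiansection}, but without truncation, using the infinite ordered set $I_{\infty}=\{(0,0),(0,1),(0,2)',(0,2),\ldots\}$ together with $u_i=(\lambda_i,\infty)$ and $v_j=(\mu_j,0)$. Stembridge's encoding identifies each element of $QSST(\lambda/\mu)$ with a non-intersecting $r$-tuple in $\mathscr{P}_0(\pmb u;\pmb v\oplus I_\infty)$, and $D$-compatibility of $\pmb u$ with $\pmb v\oplus I_\infty$ follows by the same verification as in Section \ref{skewsection}. Attaching the appropriate formal-monomial weight to each non-vertical step and weight $1$ to each vertical step yields
\[
 S_{\lambda/\mu}^Q(\pmb s)
=\sum_{(P_1,\ldots,P_r)\in\mathscr{P}_0(\pmb u;\,\pmb v\oplus I_\infty)}\prod_{i=1}^r w(P_i).
\]

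Next I would invoke Stembridge's generalised pfaffian theorem \cite[Theorem 3.2]{st} for the ordered starting tuple $\pmb u$ and target $\pmb v\oplus I_\infty$, expressing the extended sum over all (possibly intersecting) path tuples as the pfaffian of the block matrix $\begin{pmatrix} M_\lambda & H_{\lambda,\mu} \\ 0 & 0 \end{pmatrix}$. The diagonal block $M_\lambda$ collects two-path sums $Q_{\pmb v\oplus I_\infty}(u_i,u_j)$, which identify with $S_{(\lambda_i,\lambda_j)}^Q(\pmb s_{(\lambda_i,\lambda_j)})$, while the block $H_{\lambda,\mu}$ collects one-path sums from $u_i$ to $v_j$, matching the single-row factor $S_{(\lambda_i-\mu_s-j+1)}^Q(s_{i(i+j+\mu_s-1)},\ldots,s_{it_i})$. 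To cut the full pfaffian sum down to the non-intersecting one, I would apply the rightmost-intersection tail-swap involution used in the proof of Theorem \ref{pfaffian}; the diagonal hypothesis $\pmb s\in ST^{\rm diag}(\lambda/\mu,\mathbb C)$ ensures the swapped step weights coincide, so intersecting configurations cancel in signed pairs.

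The main obstacle, as in Theorem \ref{skewpfaffian}, is the indexing bookkeeping: one must verify that a one-path sum terminating at the end-vertex $v_j$ (rather than on the boundary ladder $I_\infty$) produces exactly the row-index truncation $s_{i(i+j+\mu_s-1)},\ldots,s_{it_i}$ with the correct length $\lambda_i-\mu_s-j+1$, after accounting for the chosen ordering of $\pmb v\oplus I_\infty$, and that the parity condition $2\mid r+s$ is exactly what is needed so that Stembridge's pfaffian theorem applies without introducing a phantom vertex. No analytic subtleties enter because $S_{\lambda/\mu}^Q(\pmb s)$ is defined as a formal power series rather than as a convergent series, so the quasi-symmetric case is even cleaner than its truncated multiple-zeta counterpart.
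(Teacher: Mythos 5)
Your proposal is correct and follows essentially the same route as the paper, which states this theorem as the direct quasi-symmetric analogue of Theorem \ref{skewpfaffian}: the Stembridge lattice-path encoding of $QSST(\lambda/\mu)$, the pfaffian theorem for $D$-compatible families, and the rightmost-intersection tail-swap involution whose weight-preservation is exactly what the hypothesis $\pmb s\in ST^{\rm diag}(\lambda/\mu,\mathbb C)$ guarantees. The only cosmetic difference is that you work directly with $I_\infty$ in the formal-power-series setting rather than with the truncations $I_N$ and the projections $\phi_N$, which changes nothing in the combinatorics.
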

\begin{theorem}[cf. {\cite[Theorem 4.3]{fk},\cite[Theorem 1.4]{h96}}]
Let $\lambda$ and $\mu$ be strict partitions with $\mu\le\lambda$.
Let $\theta =(\theta_1, \theta_2,\ldots, \theta_k, \theta_{k+1},\ldots,\theta_r)$ be an outside decomposition of $SD(\lambda/\mu)$, where $\theta_p$ includes a box on the main
diagonal of $SD(\lambda/\mu)$ for $1\le p\le k$ and $\theta_p$ does not for $k+1\le p\le r$. If $k$ is odd, we replace $\theta$ by $(\emptyset,\theta_1,\ldots,\theta_r)$.  
Then the Schur $Q$-type quasi-symmetric functions satisfy the identity
\[S_{\lambda/\mu}^Q(\pmb s)={\rm pf}\begin{pmatrix}
S_{(\overline{\theta}_p,\overline{\theta}_q)}^Q(\pmb s_{(\overline{\theta}_p,\overline{\theta}_q)})&S_{\theta_i\#\theta_{r+k+1-j}}^Q(\pmb s_{\theta_i\#\theta_{r+k+1-j}})\\
-{}^t(S_{\theta_i\#\theta_{r+k+1-j}}^Q(\pmb s_{\theta_i\#\theta_{r+k+1-j}}))&0
\end{pmatrix}\]
with $1 \le p, q \le k$ and $k + 1\le j \le r$. Here
\[S_{(\overline{\theta}_p,\overline{\theta}_q)}^Q(\pmb s_{(\overline{\theta}_p,\overline{\theta}_q)})= -S_{(\overline{\theta}_q,\overline{\theta}_p)}^Q(\pmb s_{(\overline{\theta}_q,\overline{\theta}_p)})\]
and $S_{(\overline{\theta}_p,\overline{\theta}_p)}^Q(\pmb s_{(\overline{\theta}_p,\overline{\theta}_q)})=0$.
\end{theorem}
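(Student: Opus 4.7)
The plan is to mirror the proof of the analogous statement for Schur $Q$-multiple zeta functions that would be obtained by combining Theorem \ref{skewpfaffian} with the outside decomposition machinery of Section \ref{outsidesec}. The only substantive change is that the weights $|v_2|^{-s}$ attached to non-vertical steps in the lattice-path construction of Sections \ref{pfaffiansection}--\ref{skewsection} are replaced by $|v_2|^{+s}$, since the monomial in the definition \eqref{def:skewQSQMZ} of $S_{\lambda/\mu}^Q$ carries positive exponents. No analytic convergence question arises: everything lives in the ring $\mathfrak P$ of formal power series, so the identity is a combinatorial one.

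First I would set up a lattice-path model for the outside decomposition $\theta=(\theta_1,\ldots,\theta_r)$ exactly as in the proof of Theorem \ref{skewpfaffian} (and following Hamel \cite{h96}). For the first $k$ strips, which meet the main diagonal, I extend $\theta_p$ downward to $\overline{\theta}_p=\theta_p\#\rho$, so that the starting vertex of the corresponding path lies in the auxiliary diagonal set $I_N=\{(0,0),(0,1),(0,2)',\ldots\}$; for the remaining $r-k$ strips, starting and ending points $B_i,E_i$ are fixed on the relevant perimeter of the skew shifted diagram in the standard Hamel way. With the positive-exponent weight convention described above, each element of $QSST(\lambda/\mu)$ corresponds bijectively to an $r$-tuple of non-intersecting paths, and the tuple weight equals the monomial $\prod_{(i,j)}|m_{ij}|^{s_{ij}}$. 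Summing over all non-intersecting tuples yields $S_{\lambda/\mu}^Q(\pmb s)$.

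Next, having assumed (by padding with $\emptyset$ if necessary) that $k$ is even, I would apply Stembridge's pfaffian theorem (Theorem \ref{lem31} and its skew extension \cite[Theorem 3.2]{st}), which requires $D$-compatibility of the starting-point sequence with the joint terminal system. The pair-of-paths sums that appear as pfaffian entries split into three types according to which starting points are paired: diagonal--diagonal pairs $(B_p,B_q)$ with $1\le p<q\le k$ give exactly $S^Q_{(\overline\theta_p,\overline\theta_q)}(\pmb s_{(\overline\theta_p,\overline\theta_q)})$ by the same bijection as in Theorem \ref{skewpfaffian}; diagonal--nondiagonal pairs $(B_p,B_j)$ with $p\le k<j$ collapse to single-strip sums indexed by $\theta_p\#\theta_{r+k+1-j}$, since a diagonal strip concatenated with a non-diagonal one produces a single skew strip (this is the content of the Hamel $\#$-operation); and nondiagonal--nondiagonal pairs lie entirely above the main diagonal, so the $D$-compatibility forces the zero block. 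The antisymmetry $S^Q_{(\overline\theta_p,\overline\theta_q)}=-S^Q_{(\overline\theta_q,\overline\theta_p)}$ and the vanishing on the diagonal come directly from the sign-reversing involution that swaps the tails of the two paths at their rightmost intersection point, which is the same involution used in the proof of Theorem \ref{pfaffian}.

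The main obstacle is not the pfaffian bookkeeping but rather verifying that the Hamel $\#$-operation of Section \ref{outsidesec} is transferred correctly to the variable-indexing pattern: one must check that $\pmb s_{\theta_i\#\theta_j}$ is well-defined (using that $\pmb s\in ST^{\mathrm{diag}}(\lambda/\mu,\mathbb C)$ assigns constant values along each diagonal, so the content-preserving slide in the $\#$-operation produces the same tuple of exponents regardless of the arbitrary bridging choice in Case 2 of the definition), and that in the degenerate cases ($\theta_i\#\theta_j=\emptyset$ or undefined) the corresponding pfaffian entry receives the value $1$ or $0$ forced by the empty/nonexistent set of paths. Once this combinatorial bookkeeping is accomplished, exactly following Hamel \cite{h96} and the proof of Theorem \ref{skewpfaffian}, the pfaffian identity for $S_{\lambda/\mu}^Q$ follows.
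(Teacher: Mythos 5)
Your proposal is correct and follows exactly the route the paper intends (and in fact omits): the Hamel-style lattice-path model for the outside decomposition of the shifted skew diagram combined with Stembridge's skew pfaffian theorem, with the step weights adjusted so that the tuple weight is the quasi-symmetric monomial rather than a reciprocal power --- the same machinery already deployed for Theorems \ref{pfaffian} and \ref{skewpfaffian} and the pfaffian of Section \ref{outsidesec}, and your identification of the key verifications (well-definedness of $\pmb s_{\theta_i\#\theta_j}$ from the diagonal-constancy of $\pmb s$, and the degenerate $\emptyset$/undefined entries) is the right bookkeeping. One small correction: the zero block is zero because its entries pair two fixed terminal vertices (sinks), to which Stembridge's theorem assigns the entry $0$ outright since no path configuration joins two sinks, not because $D$-compatibility forces it; this does not affect the argument.
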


\subsection{Symplectic type and Orthogonal type quasi-symmetric functions}

 Similarly, we define the following {\it symplectic quasi-symmetric functions} and {\it orthogonal quasi-symmetric functions}. 
 For partitions $\lambda$ and $\mu$, and ${\pmb s}=(s_{ij})\in T(\lambda/\mu,\mathbb{C})$, we define symplectic quasi-symmetric functions and orthogonal quasi-symmetric functions associated with $\lambda/\mu$ by
\begin{equation}
\label{def:QSPMZ}
 S_{\lambda/\mu}^{{\rm sp},N}({\pmb s})
=\sum_{M\in SP_N(\lambda/\mu)}\displaystyle{\prod_{(i, j)\in D(\lambda)}|m_{ij}|^{s_{ij}}},
\end{equation}
and
\begin{equation}
\label{def:QOMZ}
 S_{\lambda/\mu}^{{\rm so},N}({\pmb s})
=\sum_{M\in SO_N(\lambda/\mu)}\displaystyle{\prod_{(i, j)\in D(\lambda)}|m_{ij}|^{s_{ij}}}.
\end{equation}
\begin{theorem}
Let $\lambda=(\lambda_1,\ldots,\lambda_{r})$, $\mu=(\mu_1,\ldots,\mu_{s})$ be partitions. 
Then for $\pmb s\in T^{\rm diag}(\lambda/\mu,\mathbb C)$ and any outside decomposition $(\theta_1,\ldots,\theta_r)$ of $\lambda/\mu$,
\[S_{\lambda/\mu}^{{\rm sp},N}(\pmb s)={\rm det}(S_{\theta_i\#\theta_j}^{{\rm sp},N}(\pmb s_{(\lambda_i,\lambda_j)}))_{1\le i,j\le r}
,\]
where $\pmb s_{(\lambda_i,\lambda_j)}=\pmb s_{\lambda_i}\#\pmb s_{\lambda_j}$.
\end{theorem}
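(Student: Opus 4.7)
The plan is to repeat, essentially verbatim, the argument used for the symplectic Schur multiple zeta function in Section \ref{symplecticsection}, with the single modification that the weight assigned to a weighted step is $|v_2|^{+s_j(\theta_i)}$ rather than $|v_2|^{-s_j(\theta_i)}$. Since $SP_N(\lambda/\mu)$ and its defining conditions are unchanged, nothing in the combinatorial side needs to be re-examined; only the weight statistics on paths are replaced.

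Concretely, I would first reintroduce the Hamel directed graph $D$ built on the $y$-axis labels $1,\overline 1,2,\overline 2,\ldots,N,\overline N$ together with the dotted symplectic left boundary, and choose the starting/ending points $B_i, E_i$ attached to the strips $\theta_1,\ldots,\theta_r$ of the given outside decomposition exactly as in Section \ref{symplecticsection}. Then I would invoke the Hamel bijection to identify $SP_N(\lambda/\mu)$ with the set $\mathscr P_0((B_i);(E_i))$ of non-intersecting $r$-tuples of paths, and record that $(B_i)$ is $D$-compatible with $(E_i)$. Next, for $\pmb s\in T^{\mathrm{diag}}(\lambda/\mu,\mathbb C)$, I set the weight of a weighted step $v_{ij}^w(P)=(v_1,v_2)$ to be $|v_2|^{s_j(\theta_i)}$ and the weight of any vertical step to be $1$, and extend multiplicatively to paths and to $r$-tuples of paths. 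Because $\pmb s$ is diagonally constant, the exponent $s_j(\theta_i)$ attached to a box of content $c$ is the same number regardless of which strip contains that box, so operations $\theta_i\#\theta_j$ inherit a well-defined weight tableau $\pmb s_{(\lambda_i,\lambda_j)}=\pmb s_{\lambda_i}\#\pmb s_{\lambda_j}$, exactly as used in the symplectic multiple zeta case.

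With the weights in place, I would verify the identity
\[
 S_{\lambda/\mu}^{{\rm sp},N}(\pmb s)
 =\sum_{(P_1,\ldots,P_r)\in \mathscr P_0((B_i);(E_i))} w(P_1,\ldots,P_r),
\]
which is immediate from the Hamel bijection and the definition \eqref{def:QSPMZ}, since under the bijection each entry $m_{ij}$ of a symplectic tableau contributes a factor $|m_{ij}|^{s_{ij}}$, which matches the assigned weight of the corresponding weighted step. The single-path sums are then the $(i,j)$ entries $S_{\theta_i\#\theta_j}^{{\rm sp},N}(\pmb s_{(\lambda_i,\lambda_j)})$ of the claimed matrix, again by construction and by the diagonal-constancy of $\pmb s$.

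Finally, I would apply the determinantal Lindström--Gessel--Viennot--Stembridge lemma (the $D$-compatible version used by Hamel) to convert the sum over non-intersecting path systems into
\[
 \det\bigl(S_{\theta_i\#\theta_j}^{{\rm sp},N}(\pmb s_{(\lambda_i,\lambda_j)})\bigr)_{1\le i,j\le r},
\]
which is the desired equality. The only point requiring genuine care, and hence the main obstacle, is the bookkeeping that guarantees the weight of the path corresponding to the strip $\theta_i\#\theta_j$ really equals $S_{\theta_i\#\theta_j}^{{\rm sp},N}(\pmb s_{\lambda_i}\#\pmb s_{\lambda_j})$; this is exactly the observation that the weight depends only on the content of a box, which is ensured by the hypothesis $\pmb s\in T^{\mathrm{diag}}(\lambda/\mu,\mathbb C)$ and by the well-definedness of $\theta_i\#\theta_j$ recalled in Section \ref{outsidesec}. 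Once this bookkeeping is written out, absolutely no convergence issue arises because $S_{\lambda/\mu}^{{\rm sp},N}$ is a finite sum of monomials, and the proof is complete.
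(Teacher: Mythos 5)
Your proposal is correct and follows exactly the route the paper intends: the paper states this theorem without a separate proof precisely because it is the argument of Section \ref{symplecticsection} (Hamel's lattice-path model, the identification of $SP_N(\lambda/\mu)$ with non-intersecting path families, $D$-compatibility, and the Stembridge determinant lemma) repeated with the exponent sign in the step weights flipped to match the definition \eqref{def:QSPMZ}. Your emphasis on the content-only dependence of the weights under the hypothesis $\pmb s\in T^{\mathrm{diag}}(\lambda/\mu,\mathbb C)$, which makes the entries $S_{\theta_i\#\theta_j}^{{\rm sp},N}(\pmb s_{\lambda_i}\#\pmb s_{\lambda_j})$ well defined, is exactly the point the paper relies on (cf.\ Remark \ref{contentrem}).
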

\begin{theorem}
Let $\lambda=(\lambda_1,\ldots,\lambda_{r})$, $\mu=(\mu_1,\ldots,\mu_{s})$ be partitions. 
Then for $\pmb s\in T^{\rm diag}(\lambda/\mu,\mathbb C)$ and any outside decomposition $(\theta_1,\ldots,\theta_r)$ of $\lambda/\mu$,
\[S_{\lambda/\mu}^{{\rm so},N}(\pmb s)={\rm det}(S_{\theta_i\#\theta_j}^{{\rm so},N}(\pmb s_{(\lambda_i,\lambda_j)}))_{1\le i,j\le r}
,\]
where $\pmb s_{(\lambda_i,\lambda_j)}=\pmb s_{\lambda_i}\#\pmb s_{\lambda_j}$.
\end{theorem}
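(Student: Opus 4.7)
The plan is to mirror exactly the argument already carried out in Section \ref{orthogonalsec} for $\zeta_{\lambda/\mu}^{{\rm so},N}$, changing only the weight assigned to each non-vertical step. Specifically, I would reuse the directed acyclic graph $D$ constructed for the orthogonal setting (with the $y$-axis labelled $1,\overline{1},\ldots,N,\overline{N},\infty$, the five permissible step types, the horizontal-at-$\infty$ versus integer-level dichotomy determined by the outside decomposition, and the backwards-lattice-path left boundary imposed by the orthogonal condition SO3), together with the same beginning and ending points $B_i,E_i$ dictated by the strips $\theta_i$ of the outside decomposition.

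The only modification is in the weight function: for a horizontal or down-diagonal step $v_{ij}^{w}(P)=(v_1,v_2)$ I assign
\[
 w(v_{ij}^{w}(P))=
\begin{cases}
 |v_2|^{\,s_j(\theta_i)} & \text{if }v_2\ne\infty,\\
 1 & \text{if }v_2=\infty,
\end{cases}
\]
and $w(v_{ij}^{1}(P))=1$ on vertical steps. Since the bijection between so-tableaux of shape $\lambda/\mu$ with entries in $[\overline{N}]^{\infty}$ and non-intersecting $r$-tuples $(P_1,\ldots,P_r)\in \mathscr P_0((B_i);(E_i))$ is a purely combinatorial identification depending only on the shape, the outside decomposition, and the restrictions on $D$, and \emph{not} on the weight, the argument of Section \ref{orthogonalsec} gives
\[
 S_{\lambda/\mu}^{{\rm so},N}(\pmb s)
=\sum_{(P_1,\ldots,P_r)\in \mathscr P_0((B_i);(E_i))}\prod_{i=1}^{r} w(P_i).
\]
The $D$-compatibility of $(B_i)$ with $(E_i)$, already established by Hamel for this graph, is likewise independent of the weight.

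With this set up, the determinantal identity follows by applying the Stembridge generalization of the Lindström--Gessel--Viennot lemma (Theorem \ref{lem31} and the skew extension used in Section \ref{orthogonalsec}) to the weight $w$, exactly as in the proof of the orthogonal Schur multiple zeta function version. Each entry $S_{\theta_i\#\theta_j}^{{\rm so},N}(\pmb s_{(\lambda_i,\lambda_j)})$ of the resulting determinant is the weighted sum over single paths from $B_i$ to $E_j$, which is identified with the corresponding so-tableau sum on the merged strip $\theta_i\#\theta_j$ with variables $\pmb s_{\lambda_i}\#\pmb s_{\lambda_j}$; this identification uses that $\pmb s\in T^{\mathrm{diag}}(\lambda/\mu,\mathbb C)$ has constant entries on each diagonal, which makes $\pmb s_{\lambda_i}\#\pmb s_{\lambda_j}$ well-defined just as in Section \ref{symplecticsection}.

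The only step requiring any care, and the one I would expect to be the main (though minor) obstacle, is verifying that the new weight function still satisfies the sign-preservation property required by the Stembridge/LGV argument when two paths are swapped at their first (or right-most) intersection. Because the weight is multiplicative along steps and depends only on the physical location of each non-vertical step (and not on which strip that step belongs to), the weight is preserved under the swap; this is the analogue of the observation $a_{1j}=a_{2(j+1)}$ used at the end of the proof of Theorem \ref{pfaffian}. Once this is checked, the involution on intersecting path configurations cancels all off-diagonal terms and produces the claimed determinant, completing the proof.
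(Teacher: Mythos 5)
Your proposal is correct and follows essentially the same route the paper intends: this theorem is the quasi-symmetric analogue of the orthogonal Schur multiple zeta determinant formula of Section \ref{orthogonalsec}, obtained by reusing Hamel's lattice-path model, the weight-independent bijection with non-intersecting path tuples, and the path-swapping involution, with only the step weights changed. The one small point to tidy is that the determinant identity rests on the determinant (Lindstr\"om--Gessel--Viennot) form of Stembridge's lemma rather than on the pfaffian statement of Theorem \ref{lem31}; your key observation --- that each non-vertical step's weight depends only on its physical location because $\pmb s$ is constant on diagonals, so the swap at the chosen intersection point preserves the total weight --- is exactly what makes that argument go through.
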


\section*{Acknowledgement}
The authors would like to thank Professor Takeshi Ikeda and Professor Soichi Okada for their helpful comments.
This work was supported by Grant-in-Aid for Scientific Research (C) (Grant Number: JP22K03274) and Grant-in-Aid for Early-Career Scientists (Grant Number: JP22K13900).


\begin{thebibliography}{99}
\bibitem[BC19]{bc} H. Bachmann, S. Charlton,
Generalized Jacobi–Trudi determinants and evaluations of Schur multiple zeta values, {\it European Journal of Combinatorics}, {\bf 87} (2020), 103--133.
\bibitem[FK21]{fk} A. M. Foley and R. C. King, Determinantal and Pfaffian identities for ninth variation skew Schur functions and Q-functions, {\it European Journal of Combinatorics}, {\bf 93}(2021), 103271.
\bibitem [Ga84]{G} I. M. Gessel,
 Multipartite $P$-functions and inner products of skew Schur functions,
 combinatorics and algebra, 
 {\it Contemp. Math.}, {\bf 34} (1984), 289--301. 
 \bibitem[GV85]{gv} I.M. Gessel, G.X. Viennot, Binomial determinants, paths, and hook length formulae,
{\it Advances in Mathematics}, {\bf 58} (3) (1985), 300--321.
 \bibitem[Gr97]{gr97}
A. Granville,
{\it A decomposition of Riemann's zeta-function},
Analytic number theory (Kyoto, 1996),
London Math. Soc. Lecture Note Ser., vol. 247,
Cambridge Univ. Press, Cambridge (1997), 95--101.
\bibitem[Ha96]{h96} A.M. Hamel, Pfaffians and determinants for Schur $Q$-functions, {\it J. Combin. Theory Ser. A}, {\bf 75} (1996) 328--340
\bibitem[Ha97]{ha} A. M. Hamel, Determinantal forms for symplectic and orthogonal Schur functions. {\it Canadian Journal of Mathematics}, {\bf 49}(2) (1997), 263--282.
\bibitem[HG95]{hg} A.M. Hamel and I.P. Goulden, Planar Decompositions of Tableaux and Schur Function Determinants, {\it European J. Combin.}, {\bf 16}(1995), 461--477.
\bibitem [Ho15]{H2}
 M. E. Hoffman,
 Quasi-symmetric functions and mod $p$ multiple harmonic sums,
 {\it Kyushu J. Math.}, {\bf 69} (2015), no. 2, 345--366. 
 \bibitem[Li73]{li} B. Lindström, On the vector representations of induced matroids. {\it Bull. London Math. Soc.}, {\bf 5} (1973), 85--90.

 \bibitem[M98]{ma} I. G. Macdonald, {\it Symmetric functions and Hall polynomials}. Oxford university press, (1998).
 \bibitem[MR98]{MalvenutoReutenauer1998} C. Malvenuto and C. Reutenauer,
 Plethysm and conjugation of quasi-symmetric functions,
 Selected papers in honor of Adriano Garsia (Taormina, 1994),
 {\it Discrete Mathematics}, {\bf 193} (1998), no. 1-3, 225--233.
 \bibitem[NO]{no} M. Nakasuji and Y. Ohno, Duality formula and its generalization for Schur multiple zeta functions, arXiv : 2109.14362.
  \bibitem[NPY18]{npy} M. Nakasuji, O. Phuksuwan and Y. Yamasaki, On Schur multiple zeta functions: A combinatoric generalization of multiple zeta functions, {\it Advances in Mathematics}, {\bf 333} (2018), 570--619.
  \bibitem[{N}{T}22]{nt}M. Nakasuji and W. Takeda,
The Pieri formulas for hook type Schur multiple zeta functions, {\it J. Combin. Theory Ser. A}, {\bf 191} (2022), Paper No. 105642.
\bibitem[OZ08]{oz} Y. Ohno and W. Zudilin, Zeta stars. {\it Communications in number theory and physics}, {\bf 2}(2) (2008), 325--347.
\bibitem[Sa87]{sa} B. E. Sagan, Shifted tableaux, Schur $Q$-functions and a conjecture of R. Stanley, {\it J. Combin. Theory Ser. A}, {\bf 45} (1987), 62--103.
\bibitem[Sc01]{sc01} I. Schur, \" Uber eine Klasse von Matrizen, die sich einer gegebenen Matrix zuordnen lassen (Inaugural-Dissertation), Ph. D. thesis, Berlin, 1901. Reprinted in Abhandlungen {\bf 1},1--72. 
\bibitem[Sc11]{sc11} I. Schur, \" Uber die Darstellung der symmetrischen und der alternierenden Gruppe durch gebrochene lineare Substitutionen. {\it Journal f\"ur die reine und angewandte Mathematik}, {\bf 139} (1911), 155--250.
\bibitem[St89]{st89} John R. Stembridge, Shifted tableaux and the projective representations of symmetric groups, {\it Advances in Mathematics}, 
{\bf 74} (1989), no.1, 87--134.
\bibitem[St90]{st} John R. Stembridge, Nonintersecting paths, Pfaffians, and plane partitions,
{\it Advances in Mathematics}, {\bf 83} (1990), no. 1, 96--131.
\bibitem[Wo84]{wo} D. R. Worley, A Theory of Shifted Young Tableaux, Ph. D. thesis, MIT, 1984.
\bibitem[Za94]{z} D. Zagier, Values of zeta functions and their applications, {\it First European Congress of Mathematics}, (1994), 497--512.
\bibitem[Zh16]{z16} J. Zhao, Identity families of multiple harmonic sums and multiple zeta star values, {\it Journal of the Mathematical Society of Japan}, {\bf 68}(4) (2016), 1669--1694.
\end{thebibliography}
\end{document}